\DeclareMathAlphabet\mathrsfso{U}{rsfso}{m}{n}
\newcounter{smalllist}
\newenvironment{SL}{\begin{list}{{\rm\roman{smalllist})}}{%
\setlength{\topsep}{0mm}\setlength{\parsep}{0mm}\setlength{\itemsep}{0mm}%
\setlength{\labelwidth}{2em}\setlength{\leftmargin}{2em}\usecounter{smalllist}%
}}{\end{list}}
\newcommand{\ie}{\textit{i.e.}\;}
\newcommand{\eg}{\textit{e.g.}\;}
\newcommand{\cf}{\textit{cf.}\;}
\newcommand{\bbC}{\mathbb{C}}
\newcommand{\bbR}{\mathbb{R}}
\newcommand{\what}[1]{\widehat{#1}}		
\newcommand{\mrm}[1]{\mathrm{#1}}		
\newcommand{\ol}[1]{\overline{#1}}		
\newcommand{\co}{\colon}				
\renewcommand{\vrt}{\,\vert\,}			
\newcommand{\lto}{\rightarrow}			
\newcommand{\abs}[1]{\lvert#1\rvert}	
\newcommand{\norm}[1]{\lVert#1\rVert}	
\newcommand{\img}{\mrm{i}}				
\newcommand{\op}{\oplus}				
\newcommand{\om}{\ominus}				
\newcommand{\setm}{\smallsetminus}		
\newcommand{\hsum}{\,\what{+}\,}		
\newcommand{\hop}{\,\what{\op}\,}		
\DeclareMathOperator{\im}{Im}     
\DeclareMathOperator{\re}{Re}     
\DeclareMathOperator{\Ind}{Ind}   
\DeclareMathOperator{\Ker}{Ker}   
\DeclareMathOperator{\Lin}{Lin}   
\DeclareMathOperator{\CLin}{\textit{C}\,Lin}   
\DeclareMathOperator{\St}{St}     
\newcommand{\msH}{\mathrsfso{H}}
\newcommand{\msD}{\mathrsfso{D}}
\newcommand{\msR}{\mathrsfso{R}}
\newcommand{\msG}{\mathrsfso{G}}
\newcommand{\msN}{\mathrsfso{N}}
\newcommand{\msL}{\mathrsfso{L}}
\newcommand{\msB}{\mathrsfso{B}}
\newcommand{\msK}{\mathrsfso{K}}
\newcommand{\msA}{\mathrsfso{A}}
\newcommand{\msU}{\mathrsfso{U}}
\newcommand{\hx}{\hat{x}}
\newcommand{\hy}{\hat{y}}
\newcommand{\hl}{\hat{l}}
\newcommand{\hh}{\hat{h}}
\newcommand{\GB}{\Gamma^{\scriptscriptstyle{B} } }
\newcommand{\GBp}{\Gamma^{\scriptscriptstyle{B^\prime} } }
\newcommand{\GA}{\Gamma^{\scriptscriptstyle{A} } }
\newcommand{\GAp}{\Gamma^{\scriptscriptstyle{A^\prime} } }
\newcommand{\phB}{\phi^{\scriptscriptstyle{B} } }
\newcommand{\hGm}{\hat{\Gamma}}
\newcommand{\mGm}{\dot\Gamma}
\newcommand{\mT}{\dot T}
\newcommand{\mN}{\dot N}
\newcommand{\tU}{\tilde{U}}
\newcommand{\tK}{\tilde{K}}
\newtheorem{thm}{Theorem}[section]
\newtheorem{lem}[thm]{Lemma}
\newtheorem{prop}[thm]{Proposition}
\newtheorem{cor}[thm]{Corollary}
\theoremstyle{remark}
\newtheorem{rem}[thm]{Remark}
\newtheorem{exam}[thm]{Example}
\theoremstyle{definition}
\newtheorem{defn}[thm]{Definition}
\begin{document}
\title[]{On the similarity of boundary triples
for dual pairs}
\author{Rytis Jur\v{s}\.{e}nas}
\address{Vilnius University,
Institute of Theoretical Physics and Astronomy,
Saul\.{e}tekio ave.~3, 10257 Vilnius, Lithuania}
\email{rytis.jursenas@tfai.vu.lt}
\thanks{}
\keywords{Dual (adjoint) pair of linear relations,
boundary triple, Weyl family}
\subjclass[2010]{46C20, 47B50, 47A56}
\date{\today}
\begin{abstract}
The Weyl family of a dual pair $A\subseteq B^c$
of operators in a Krein space determines
a minimal boundary triple uniquely up to similarity;
if $A=B$, a necessary and sufficient
condition in order that the similarity should be
unitary is given.
\end{abstract}
\maketitle
\section{Introduction}
Let $A$, $B$ be closed linear relations in a
complex separable Krein
space $\msH$. The pair $(A,B)$ is a dual pair (dp)
if $A\subseteq B^c$
(the adjoint, \cite[Definition~2.6.7]{Azizov89});
in \cite{Behrndt23,Schmudgen20} a dp is an adjoint pair
of closed densely defined operators in a Hilbert space.
Adapted from \cite{Hassi13} to the Krein space setting
is a boundary triple (bt)
$\Pi=(\msG_0\op\msG_1,\GB,\GA)$;
as $\GA$ ($\GB$) is uniquely determined by
$\GB$ ($\GA$),
$\Pi\equiv(\msG,\GB)$, $\msG=\msG_0\op\msG_1$.
A bt for a dp exists always, see
\cite{Malamud02}, also Remark~\ref{rem:bt}.
To avoid repetition
we refer instead to \cite{Behrndt23,Brown19,Hassi13,Malamud10}
and a list of sources
therein for motive and applications.

The problem we address here is whether
the Weyl family
$\lambda\mapsto M_{\GB}(\lambda)=\GB(\lambda I)$
on a suitable subset of the complex plane determines
$\Pi$ uniquely. If $A=B$ has equal defect numbers
and $\Pi$ is
an ordinary boundary triple (obt),
$\Pi_\Gamma=(\msG,\Gamma)$, the Hilbert space
solution is found essentially in
\cite{Derkach95a,Derkach91,Langer77}:
The Weyl family
$\lambda\mapsto M_{\Gamma}(\lambda)=\Gamma(\lambda I)$
on $\bbC_*=\bbC\setm\bbR$
determines a minimal obt $\Pi_\Gamma$ uniquely
up to unitary equivalence. Related statements in
a Pontryagin space exist too,
\cite{Jursenas23,Hassi16,Hassi98}.

For dp's
it is known that $M_{\GB}$ determines
$\Pi$ up to so-called weak
similarity\footnote{The original proof in
M.M. Malamud, V.I. Mogilevskii, On Weyl functions and
$Q$-functions of dual pairs of linear
relations, Dop. Akad. Nauk Ukr. (1999), no.4, 32–37,
is inaccessible to us; but
see \cite{Hassi05}
for the definition of weak similarity, which
is due to \cite{Arov79}.
The sources, however, are not critical
for our presentation.}. We prove that
the weak similarity can be replaced by the similarity
in its usual sense. Although now the spectral properties
are preserved, it is nonetheless of general interest
to explore whether the similarity can be chosen unitary,
just like for obt's.
If $A=B$, this is indeed so iff the boundary
operators $\GB$ and $\GBp$ in bt's
$\Pi=(\msG,\GB)$ and $\Pi^\prime=(\msG,\GBp)$
for dp's $(A,A)$ and $(A^\prime,A^\prime)$ in
Krein spaces $\msH$ and $\msH^\prime$
satisfy the condition:
$(\GB)^{-1}\GBp$ is a
unitary relation $\msH^\prime_\Gamma\lto\msH_\Gamma$;
the Krein space
$(\msH_\Gamma,[\,\,\cdot\,\,,\,\,\cdot\,\,]_\Gamma)$
($(\msH^\prime_\Gamma,
[\,\,\cdot\,\,,\,\,\cdot\,\,]^\prime_\Gamma)$) is as defined in
\cite[Section~2.1]{Azizov89}. An equivalent characterization
is that $(\GB)^{-1}\GBp=(\GA)^{-1}\GAp$,
see Sections~\ref{sec:mobt}--\ref{sec:bt} for details.

As an exercise, the
results on unitarity in
\cite{Hassi13,Arlinskii05}
fall within the general criterion.
As an application, in Section~\ref{sec:lft}
we answer the question raised in
\cite[Remark~3.16]{Hassi13} and
in Section~\ref{sec:Dbt} we show that
the Weyl function of a simple $\pi$-symmetric
operator (\cite{Azizov03})
of class $(L)$ (\cite{Langer75})
determines a $D$-boundary triple up to
unitary equivalence; a $D$-bt of a symmetric
operator in a $\Pi_0$-space
was originally defined in \cite{Mogilevski06}, see also
\cite{Mogilevski11,Mogilevski09}.

The main idea in our approach is to
associate with a dp $(A,B)$ a block diagonal
linear relation $T=\bigl(\begin{smallmatrix}A & 0 \\
0 & B \end{smallmatrix}\bigr)$, which is
closed symmetric in a suitable Krein space.
Although an off-diagonal analogue is also used
\eg in \cite{Schmudgen20}, \cf \cite{Mogilevskii20},
here we go on a step further
in determining the correspondence between
a bt $\Pi$ for $(A,B)$
and an obt $\Pi_\Gamma$ for
$T^c=\bigl(\begin{smallmatrix}B^c & 0 \\
0 & A^c \end{smallmatrix}\bigr)$.
As a matter of fact our method works for general
isometric boundary pairs.
\section{Minimal ordinary boundary triple}\label{sec:mobt}
Standard sources for the Hilbert space (linear) relations
and the Krein ($J$-)spaces are
\cite{Behrndt20,Hassi09,Azizov89}.
Assuming that the reader is familiar with
the theory of boundary value spaces,
well established definitions are not detailed.

Notation is as in \cite{Azizov89}. Particularly,
if $T$ is a relation in a Krein space
$(\msH,[\,\,\cdot\,\,,\,\,\cdot\,\,])$,
the points of regular type
$(\lambda\in r(T))$ as well as other
spectral points are defined as in
\cite[Section~2.6]{Azizov89}. We use
$r_0(T)$ for the set of $\lambda\in\bbC$
such that $\msR_{T-\xi I}$ is a subspace
(closed lineal) in $\msH$
for both $\xi=\lambda$ and $\xi=\bar{\lambda}$.
The kernel
$\Ker_\lambda T=\Ker(T-\lambda I)$.

Let $T$ be a closed symmetric relation in
$\msH$ with equal nonzero defect numbers.
Let $\Pi_\Gamma=(\msG,\Gamma)$ be an obt for $T^c$;
\ie $\Gamma$
is a unitary surjective
operator $\msH_\Gamma\lto\msG_\Gamma$,
$\msD_{\Gamma}=T^c$.
As usual
$\Gamma_i=\pi_i\Gamma$ for $i=0,1$
(with $\pi_0\co\msG_\Gamma\lto\msG$, $\binom{l}{h}\mapsto l$
and $\pi_1\co\msG_\Gamma\lto\msG$, $\binom{l}{h}\mapsto h$),
$T_i=\Ker\Gamma_i$, and the Weyl family
$\lambda\mapsto M_\Gamma(\lambda)=\Gamma(\lambda I)$
on $\bbC$, where
the domain restriction $\Gamma\vrt\lambda I=
\Gamma\vrt\lambda I\cap\msD_\Gamma$, with similar
interpretation throughout.
Let
\[
r(T,T_0)=r(T_0)\cap r_0(T)\cap
\sigma_p(T^c)\cap\sigma_r(T)\,.
\]
\begin{rem}
Let $\lambda\in r(T_0)\cap r_0(T)$.
A real
$\lambda\in\sigma_p(T^c)$.
For a non-real
$\lambda\in\sigma_p(T^c)$
(resp. $\lambda\in\sigma_r(T)$ or
equivalently $\bar{\lambda}\in\sigma_p(T^c)$)
it suffices that
$\Ker_\lambda JT^c\notin\msA^-$
(resp. $\Ker_{\bar{\lambda}} JT^c\notin\msA^-$)
\cite[Section~1.8]{Azizov89}, \eg that
$\Ker_\lambda JT^c$
(resp. $\Ker_{\bar{\lambda}} JT^c$) be
positive in $\msH$, as is the case if
$T\in(L)$ and $\abs{\im\lambda}$ is sufficiently
large.
In the Hilbert space context
$\lambda\in r(T,T_0)$ is just a regular
point $\lambda\in\rho(T_0)$.
\end{rem}
\begin{defn}
$\{\msU,M\}$ is a holomorphic element
if $\msU$ is an open subset in the extended complex plane and
$M$
is a closed relation
holomorphic on $\msU$.
\end{defn}
A holomorphic closed relation is as defined in
\cite{Derkach95}.
$M_\Gamma(\lambda)=M_\Gamma(\bar{\lambda})^*$
for $\lambda\in r_0(T)$
($\lambda\in r(T_0)\cap r_0(T)$)
is a closed relation in $\msG$
(a closed bounded operator from
$\msB(\Gamma_0(\lambda I),\msG)$),
see Proposition~\ref{prop:M}.
\begin{defn}
An obt $\Pi_\Gamma\equiv\{\msU,\Pi_\Gamma\}$
for $T^c$ is minimal
if $\{\msU,M_\Gamma\}$ and $\{\msU^*,M_\Gamma\}$
($\msU^*=\{\lambda\vrt\bar{\lambda}\in\msU \}$) are
holomorphic elements
for some open subset $\msU\subseteq r(T,T_0)$,
and
$\CLin\{\Ker_\lambda T^c\vrt\lambda\in
\msU\cup\msU^* \}=\msH$.
\end{defn}
\begin{rem}
Essentially we drop the assumption that
$\rho(T_0)$ ($\subseteq r(T_0)\cap r_0(T)$) is nonempty.
It can happen that $\rho(T_0)=\emptyset$---as well as
$\rho(T_1)=\emptyset$, so that
making $\Pi_\Gamma$ into the transposed obt would
not change the picture---and yet $M_\Gamma(\lambda)$
would be holomorphic for
all $\lambda\in\bbC^-=r(T,T_0)$ and then
for all $\lambda\in\bbC_*$:
This happens
if \eg (Section~\ref{sec:Dbt})
$T=\bigl(\begin{smallmatrix}A & 0 \\ 0 & A
\end{smallmatrix} \bigr)$
in $\msH_\Gamma$, where $A$ is a closed densely
defined simple symmetric operator in a Hilbert space
$\msH$, and
$T_0=\bigl(\begin{smallmatrix}A_0 & 0 \\ 0 & A^*_0
\end{smallmatrix} \bigr)$,
where $A_0\supset A$ is a closed maximal symmetric
operator with $\rho(A_0)=\bbC^+$.
On the other hand, there are
examples of obt's with $\rho(T_0)\neq\emptyset$
among $\lambda_0$-standard $T$
\cite[Section~5.2.4]{Azizov89},
\cf \cite[Corollary~3.16]{Malamud02};
note that $I+(\lambda-\bar{\lambda})(T_0-\lambda I)^{-1}$
($\lambda\in\rho(T_0)$) maps $\Ker_{\bar{\lambda}} T^c$
bijectively onto $\Ker_{\lambda}T^c$.
\end{rem}
\begin{rem}
In a $\Pi_\kappa$-space:
$M_\Gamma$ is realized minimally
\cite{Hassi16,Hassi98} if
$\CLin\{\Ker_\lambda T^c\vrt \lambda\in\rho(T_0)
\}=\Pi_\kappa$;
a boundary
operator $\Gamma$ is minimal \cite{Behrndt11}
if $\CLin\{\Ker_\lambda T^c\vrt \lambda\in r(T)
\}=\Pi_\kappa$;
$\Pi_\Gamma$
is minimal for a simple $\pi$-symmetric
$T\in(L)$ for some $\msU$ (Corollary~\ref{cor:Mo}).
\end{rem}
\begin{rem}
$T$ with minimal $\Pi_\Gamma$ is an
operator without eigenvalues:
If $x_\mu\in\Ker_{\mu} T$,
$\mu\in\sigma_p(T)$,
$x_\lambda\in\Ker_\lambda T^c$,
$\lambda\in \msU\cup\msU^*$,
then $(\lambda-\mu)[x_\mu\,,\,x_\lambda]=0$
by $T\subset T^c$,
and then $\lambda\neq\mu$
(since $\msU\subseteq r(T_0)$,
while $\msU^*\cap\sigma_p(T)\neq\emptyset$
gives $T_0=T$) yields
$x_\mu=0$.
\end{rem}
\section{Unitary equivalence of minimal ordinary boundary
triples}
In what follows
$T$ is a closed symmetric relation in
$\msH$ with equal nonzero defect numbers,
$\Pi_\Gamma=(\msG,\Gamma)$ is an obt for $T^c$
with Weyl family $M_\Gamma$,
and
$T^\prime$ is (possibly) another closed symmetric relation
in (possibly) another Krein space $\msH^\prime$, and
$\Pi_{\Gamma^\prime}=(\msG,\Gamma^\prime)$ is
an obt for $T^{\prime c}$ with
Weyl family $M_{\Gamma^\prime}$.
(All objects
associated with $T^\prime$ are indicated by the primes
and are supposed to be defined similarly to those associated
with $T$, with the same interpretation elsewhere.)
\begin{defn}
The holomorphic elements $\{\msU,M\}$ and
$\{\msU^\prime,M^\prime\}$
are continuations of each other,
$\{\msU,M\}\leftrightarrow
\{\msU^\prime,M^\prime\}$, if
$M=M^\prime$ ($\not\equiv\mathrm{const.}$) on
each connected component
in $\msU\cap\msU^\prime$.
\end{defn}
\begin{thm}\label{thm:obt}
Let $\{\msU,\Pi_\Gamma\}$,
$\{\msU^{\prime},\Pi_{\Gamma^\prime}\}$
be minimal obt's for
$T^c$, $T^{\prime c}$ respectively.
If
$\{\msU,M_\Gamma\}\leftrightarrow
\{\msU^{\prime},M_{\Gamma^\prime}\}$
then $\Pi_{\Gamma^\prime}$ is unitarily
equivalent to $\Pi_\Gamma$.
\end{thm}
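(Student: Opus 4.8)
The plan is to construct the unitary operator $V\co\msH\lto\msH^\prime$ directly from the defect subspaces, exploiting that a minimal obt is, by definition, spanned (in the $\CLin$-sense) by the kernels $\Ker_\lambda T^c$ over $\lambda\in\msU\cup\msU^*$. First I would fix a connected component $\msV$ of $\msU\cap\msU^\prime$ on which $M_\Gamma=M_{\Gamma^\prime}$, and by the identity theorem for holomorphic relations conclude that the agreement propagates: $M_\Gamma$ and $M_{\Gamma^\prime}$, being holomorphic continuations of each other, coincide on every connected component they share, and in particular the values $M_\Gamma(\lambda)$, $M_{\Gamma^\prime}(\lambda)$ agree as closed relations in $\msG$ for all $\lambda$ in the (relevant parts of the) common domain. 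The key structural fact I need is the standard description of the defect elements through the Weyl family: for $\lambda\in r(T,T_0)$ one has $\Ker_\lambda T^c=\{\hx_\lambda\}$ with $\Gamma_0\hx_\lambda=l$ and $\Gamma_1\hx_\lambda=M_\Gamma(\lambda)l$, i.e. $\hx_\lambda$ is uniquely pinned down by $l\in\Gamma_0(\lambda I)=\msG$ (using $\lambda\in r(T_0)$), and similarly on the primed side with the \emph{same} $M$. This is exactly Proposition~\ref{prop:M} territory.

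Next I would define $V$ on the dense lineal $\CLin\{\Ker_\lambda T^c\co\lambda\in\msU\cup\msU^*\}=\msH$ by $V\hx_\lambda\mapsto\hx^\prime_\lambda$, matching defect elements with the same boundary data $l$ at the same point $\lambda$; the main point is that $V$ is \emph{isometric} with respect to the Krein inner products. For this I would write the inner product $[\hx_\lambda,\hx_\mu]$ of two defect elements at points $\lambda,\mu$ in terms of $M_\Gamma$: the abstract Green identity for the obt $\Pi_\Gamma$ (equivalently, unitarity of $\Gamma\co\msH_\Gamma\lto\msG_\Gamma$) gives
\[
(\lambda-\bar\mu)[\hx_\lambda,\hx_\mu]
=[\Gamma_1\hx_\lambda,\Gamma_0\hx_\mu]_\msG
-[\Gamma_0\hx_\lambda,\Gamma_1\hx_\mu]_\msG
=[M_\Gamma(\lambda)l,m]_\msG-[l,M_\Gamma(\mu)m]_\msG,
\]
and the right-hand side depends only on $M_\Gamma$, $l$, $m$, $\lambda$, $\mu$ — hence equals the corresponding expression for the primed triple whenever $M_\Gamma=M_{\Gamma^\prime}$. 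Thus $[\hx_\lambda,\hx_\mu]=[\hx^\prime_\lambda,\hx^\prime_\mu]^\prime$ for $\lambda\neq\bar\mu$; the diagonal case $\lambda=\bar\mu$ is handled either by a continuity/analyticity argument in $\mu$ or by the analogous identity with $\lambda$ and $\bar\mu$ distinct and a limiting value of the difference quotient of $M_\Gamma$. So $V$ extends to a well-defined isometry of $\msH$ onto $\msH^\prime$ (surjectivity by symmetry of the construction), i.e. a Krein-space unitary.

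It remains to check that $V$ intertwines the triples: $VT^c=T^{\prime c}$ and $\Gamma^\prime\circ\hat V=\Gamma$, where $\hat V=\mathrm{diag}(V,V)$ acts on pairs. Since $T^c=\CLin\{\Ker_\lambda T^c\co\lambda\in\msU\cup\msU^*\}$ holds at the level of graphs (the defect elements span $T^c$ over such a rich set of $\lambda$, using $T^c=T_0\hsum\Ker_\lambda T^c$ near each $\lambda$ and minimality to sweep out $T_0$ as well), the equality $VT^c=T^{\prime c}$ follows from $V\hx_\lambda=\hx^\prime_\lambda$ and linearity; and $\Gamma^\prime\hat V\hx_\lambda=\Gamma^\prime\hx^\prime_\lambda=\binom{l}{M_{\Gamma^\prime}(\lambda)l}=\binom{l}{M_\Gamma(\lambda)l}=\Gamma\hx_\lambda$ gives the intertwining on generators, hence everywhere by linearity and closedness. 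The main obstacle I anticipate is precisely the bookkeeping around the diagonal term $[\hx_\lambda,\hx_{\bar\lambda}]$ and, more broadly, making the "defect elements span $T^c$" step fully rigorous when $\rho(T_0)$ may be empty — one cannot invoke the usual resolvent formula $I+(\lambda-\bar\lambda)(T_0-\lambda I)^{-1}$ globally, so the spanning must be extracted from the minimality hypothesis and the decomposition $\msD_\Gamma=T_0\hsum\Ker_\lambda T^c$ valid pointwise on $\msU$, together with the $\CLin$-density; this is where the holomorphy of $\{\msU,M_\Gamma\}$ as a holomorphic element does the real work.
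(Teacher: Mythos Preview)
Your construction of $V$ on the span of defect elements and the isometry check via the Green identity match the paper's opening move (there phrased as the holomorphic continuation of $\gamma_\Gamma(\bar\lambda_0)^c\gamma_\Gamma(\lambda)$, which encodes the same inner-product data and handles the diagonal $\lambda=\bar\mu$ automatically). One inaccuracy: in this generality $\Gamma_0(\lambda I)$ need not equal $\msG$ for $\lambda\in r(T_0)$---that holds only for $\lambda\in\rho(T_0)$, which the paper explicitly allows to be empty---so your parametrization of $\Ker_\lambda T^c$ by all of $\msG$ is too optimistic, though this is repairable.

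The real gap is the intertwining step. You claim that $\CLin\{\hx_\lambda\}=T^c$ at the level of graphs, arguing via $T^c=T_0\hsum(\lambda I\cap T^c)$ and ``sweeping out $T_0$'' as $\lambda$ varies. But that decomposition holds only for $\lambda\in\rho(T_0)$, and the minimality hypothesis gives density of $\{\Ker_\lambda T^c\}$ in $\msH$, not of $\{\hx_\lambda\}$ in $T^c$. You rightly flag this as the main obstacle; you do not resolve it, and in the setting where $\rho(T_0)=\emptyset$ there is no obvious fix along these lines.

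The paper takes a different route past this point. After extending $U$ to $\St_1(\msH,\msH^\prime)$ it does not try to show $\Gamma^\prime\tU=\Gamma$ directly. Instead, the resolvent-type identities of Lemma~\ref{lem:ibp} a), d) give $\tU(T_0)=T^\prime_0$ and $\tU(T_1)=T^\prime_1$, hence $\tU(T)=T^\prime$. Lemma~\ref{lem:iso} then supplies \emph{some} $V\in\St_1(\msH_\Gamma,\msH^\prime_\Gamma)$ with $\Gamma^\prime=\Gamma V^{-1}$, and Lemma~\ref{lem:W} writes $V=\tU W$ with $W\in\St_1(\msH_\Gamma)$ fixing each $\lambda I\cap T^c$. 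A final computation forces the off-diagonal blocks of $W$ to vanish, so $W=\tK$ for some $K\in\St_1(\msH)$ and $\Pi_{\Gamma^\prime}$ is $UK$-unitarily equivalent to $\Pi_\Gamma$. This two-stage argument (first match $T$, then correct by $W$) is precisely what replaces your spanning claim.
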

As usual
$\Pi_{\Gamma^\prime}$
is ($U$-)unitarily equivalent to $\Pi_\Gamma$
if there is
$U\in\St_1(\msH,\msH^\prime)$
such that
$\Gamma^\prime=\Gamma\tU^{-1}$,
$\tU=\bigl(\begin{smallmatrix}U & 0
\\ 0 & U \end{smallmatrix}\bigr)\co\msH_\Gamma
\lto\msH^\prime_\Gamma$.
Here
$\St_1(\msH,\msH^\prime)$ (or $\St_1(\msH)$ if
$\msH\equiv\msH^\prime$) is the set of
standard unitary operators as defined in
\cite[Definition~2.5]{Derkach09}.

To prove the theorem we use three lemmas;
obt's need not be minimal.
\begin{lem}\label{lem:iso}(\cite[Theorem~5.8]{Jursenas23})
The subspaces $T$ $(\subset\msH_\Gamma)$ and
$T^\prime$ $(\subset\msH^\prime_\Gamma)$ are isomorphic iff
$(\exists V\in\St_1(\msH_\Gamma,\msH^\prime_\Gamma))$
$\Gamma^\prime=\Gamma V^{-1}$.
\end{lem}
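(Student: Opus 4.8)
The plan is to reduce the statement to the fact, established in \cite[Theorem~5.8]{Jursenas23}, that two closed subspaces of Kre\u{\i}n spaces of the form $\msH_\Gamma$, $\msH^\prime_\Gamma$ are isomorphic (as subspaces equipped with the inherited indefinite inner product) precisely when there is a standard unitary operator between the ambient spaces carrying one onto the other. First I would recall that $T\subset\msH_\Gamma$ and $T^\prime\subset\msH^\prime_\Gamma$ are, by definition of an obt, exactly the domains $\msD_\Gamma$, $\msD_{\Gamma^\prime}$ of the unitary boundary operators — more precisely $T=T^{cc}$ sits inside $T^c=\msD_\Gamma$, and what matters is the pair $(T^c,T)\subset\msH_\Gamma$ together with its $J$-orthogonal structure. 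So the two directions split naturally.

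For the direction $(\Leftarrow)$: given $V\in\St_1(\msH_\Gamma,\msH^\prime_\Gamma)$ with $\Gamma^\prime=\Gamma V^{-1}$, one checks that $V$ restricts to an isometric bijection $T^c\to T^{\prime c}$ (since $V$ maps $\msD_\Gamma=\msD_{\Gamma^\prime}V$ appropriately and preserves the inner product) and then, because $T=\Ker\Gamma_0\cap\Ker\Gamma_1$ is expressed purely through $\Gamma$ — hence through $\Gamma^\prime V$ — that $V$ sends $T$ onto $T^\prime$. This gives the required isomorphism of subspaces. The computation here is routine: it is just chasing that $V$ intertwines the two unitary boundary maps and therefore intertwines their kernels.

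For the harder direction $(\Rightarrow)$: suppose $T$ and $T^\prime$ are isomorphic as subspaces. Invoking \cite[Theorem~5.8]{Jursenas23} directly would already produce a $V\in\St_1(\msH_\Gamma,\msH^\prime_\Gamma)$ with $VT=T^\prime$; the real work is to upgrade this to the identity $\Gamma^\prime=\Gamma V^{-1}$, i.e.\ to arrange that $V$ is compatible with the chosen boundary operators and not merely with the subspaces. The point is that $\Gamma$ (resp.\ $\Gamma^\prime$) is a unitary operator from $\msH_\Gamma$ onto $\msG_\Gamma$ with domain $T^c$; composing, $\Gamma^\prime V \Gamma^{-1}$ is then a unitary relation of $\msG_\Gamma$ onto itself, and one must show it can be taken to be the identity. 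I expect this to be the main obstacle, and I would handle it by absorbing the automorphism: replace $V$ by $W V$ where $W$ is obtained from the unitary self-relation of $\msG_\Gamma$ via the inverse correspondence (the fact that every standard unitary operator on $\msG_\Gamma$ comes from one on $\msH_\Gamma$ fixing $T^c$), after first checking $W V$ is still standard unitary and still maps $T$ onto $T^\prime$. Since $T^c$ and $T^{\prime c}$ are the domains and are themselves determined by $T$, $T^\prime$ through $T^c=(T)^c$ computed in $\msH_\Gamma$, $\msH^\prime_\Gamma$, any isomorphism of $T$ onto $T^\prime$ automatically respects these domains, so the adjustment is internal to $\msG_\Gamma$ and the argument closes. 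Finally I would note that this lemma is exactly the special case $\msH_\Gamma\rightsquigarrow\msH_\Gamma$, $\msH^\prime_\Gamma\rightsquigarrow\msH^\prime_\Gamma$ needed later, so no genuinely new machinery beyond \cite{Jursenas23} is required.
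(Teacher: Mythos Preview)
The paper gives no proof of this lemma at all: the parenthetical citation means that Lemma~\ref{lem:iso} \emph{is} \cite[Theorem~5.8]{Jursenas23}, quoted verbatim. There is nothing to reduce to and nothing to upgrade; the condition $\Gamma^\prime=\Gamma V^{-1}$ is already part of the statement being cited, not a strengthening of it. So your $(\Rightarrow)$ paragraph is based on a misreading of what the reference supplies.

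Setting that aside, the upgrade argument you sketch also has a genuine gap. After obtaining $V_0\in\St_1(\msH_\Gamma,\msH^\prime_\Gamma)$ with $V_0(T)=T^\prime$, you correctly note that $(\msG,\Gamma V_0^{-1})$ and $(\msG,\Gamma^\prime)$ are two obt's for $T^{\prime c}$, hence $\Gamma^\prime=W_0\,\Gamma V_0^{-1}$ for some $W_0\in\St_1(\msG_\Gamma)$. But your proposed absorption step---``every standard unitary operator on $\msG_\Gamma$ comes from one on $\msH_\Gamma$ fixing $T^c$''---is not true. Concretely, the candidate $\Gamma^{-1}W_0\Gamma$ is a linear relation on $\msH_\Gamma$ with domain $T^c$ and multivalued part $T=\Ker\Gamma$; it is neither single-valued nor everywhere defined, so it is not an element of $\St_1(\msH_\Gamma)$, and composing it with $V_0$ does not yield a standard unitary $V$. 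In general two obt's for the same $T^c$ are related on the $\msG$-side, not on the $\msH$-side, so the passage from $V_0(T)=T^\prime$ to $\Gamma^\prime=\Gamma V^{-1}$ requires the actual construction carried out in \cite{Jursenas23}, not a formal composition.
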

\begin{lem}\label{lem:W}(\cite[Theorem~6.7]{Jursenas23})
Let $T^\prime$ be $U$-unitarily equivalent to $T$
(\ie $T^\prime=\tU(T)$).
If $M_{\Gamma^\prime}=M_\Gamma$
on a nonempty subset
$\Lambda$ in $r(T)\cap r_0(T)$,
then there is $W\in\St_1(\msH_\Gamma)$ such that
\begin{SL}
\item[$\mathrm{a)}$]
$\Pi_{\Gamma^\prime}$ is $U$-unitarily equivalent to
an obt $(\msG,\Gamma W^{-1})$ for $T^c$;
\item[$\mathrm{b)}$]
$W(\lambda I\cap T^c)=\lambda I\cap T^c$,
$\lambda\in\Lambda$.
\end{SL}
\end{lem}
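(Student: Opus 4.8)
The plan is to reduce to two ordinary boundary triples for one and the same $T^c$, read off a) directly from Lemma~\ref{lem:iso}, and then extract b) from the kernel/domain bookkeeping of the boundary maps, with the Weyl coincidence entering only at the last step.

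First I would transport away $\tU$. Since $\tU=\bigl(\begin{smallmatrix}U&0\\0&U\end{smallmatrix}\bigr)$ is a standard unitary $\msH_\Gamma\lto\msH^\prime_\Gamma$ with $\tU(T)=T^\prime$, the composite $\Gamma^{\prime\prime}:=\Gamma^\prime\tU$ is a unitary surjective operator $\msH_\Gamma\lto\msG_\Gamma$ with $\msD_{\Gamma^{\prime\prime}}=\tU^{-1}T^{\prime c}=T^c$ and $\Ker\Gamma^{\prime\prime}=\tU^{-1}T^\prime=T$; that is, $(\msG,\Gamma^{\prime\prime})$ is an obt for $T^c$. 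Because $\tU$ acts as $U$ in each component, $\tU(\lambda I\cap T^c)=\lambda I\cap T^{\prime c}$, so $M_{\Gamma^{\prime\prime}}=M_{\Gamma^\prime}$ identically, in particular $M_{\Gamma^{\prime\prime}}=M_\Gamma$ on $\Lambda$. It therefore suffices to produce $W\in\St_1(\msH_\Gamma)$ with $\Gamma^{\prime\prime}=\Gamma W^{-1}$ and $W(\lambda I\cap T^c)=\lambda I\cap T^c$ for $\lambda\in\Lambda$: the first identity gives $\Gamma^\prime=\Gamma W^{-1}\tU^{-1}$, which is exactly a), and the second is b).

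For a), the subspace $T\subset\msH_\Gamma$ is trivially isomorphic to itself, so Lemma~\ref{lem:iso}, applied to the obt's $(\msG,\Gamma)$ and $(\msG,\Gamma^{\prime\prime})$ for $T^c$, furnishes $W\in\St_1(\msH_\Gamma)$ with $\Gamma^{\prime\prime}=\Gamma W^{-1}$; matching kernels and domains forces $WT=T$ and $W(T^c)=T^c$, so $(\msG,\Gamma W^{-1})$ is again an obt for $T^c$ and a) holds. For b) I would test $W$ on $\lambda I\cap T^c$. As $\Ker\Gamma=\Ker\Gamma^{\prime\prime}=T$, the map $Q:=\Gamma^{\prime\prime}\Gamma^{-1}$ is a single-valued unitary of $\msG_\Gamma$ with $\Gamma^{\prime\prime}=Q\Gamma$, so $M_{\Gamma^{\prime\prime}}(\lambda)=QM_\Gamma(\lambda)$; the hypothesis $M_{\Gamma^{\prime\prime}}=M_\Gamma$ on $\Lambda$ thus says $Q$ fixes $M_\Gamma(\lambda)$ for $\lambda\in\Lambda$. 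Combining $\Gamma=\Gamma^{\prime\prime}W=Q\Gamma W$ with this yields $\Gamma\bigl(W(\lambda I\cap T^c)\bigr)=Q^{-1}M_\Gamma(\lambda)=M_\Gamma(\lambda)=\Gamma(\lambda I\cap T^c)$, whence, because $\Ker\Gamma=T$, $W(\lambda I\cap T^c)+T=(\lambda I\cap T^c)+T$; that is, $W$ preserves $\lambda I\cap T^c$ \emph{modulo} $T$.

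The hard part will be to upgrade this to the exact equality b), i.e. to kill the $T$-component. The obstruction is genuine: $T$ is the isotropic radical of $[\,\cdot\,,\,\cdot\,]_\Gamma$ on $T^c$, and both $\lambda I\cap T^c$ and $\bar\lambda I\cap T^c$ are $[\,\cdot\,,\,\cdot\,]_\Gamma$-orthogonal to $T$, so the superfluous $T$-part of $W(\lambda I\cap T^c)$ is invisible to any pairing carried out inside $T^c$, and a generic $W$ satisfying a) need not satisfy b). I would resolve this by not testing an arbitrary $W$ but by \emph{choosing} it compatibly: the coincidence $M_{\Gamma^{\prime\prime}}=M_\Gamma$ on $\Lambda$ means $\Gamma$ and $\Gamma^{\prime\prime}$ restrict to the same bijection $\lambda I\cap T^c\to M_\Gamma(\lambda)$ modulo the common kernel $T$, so one builds the isomorphism underlying Lemma~\ref{lem:iso} so that its extension $W$ carries $\lambda I\cap T^c$ onto $\lambda I\cap T^c$ for each $\lambda\in\Lambda$. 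Nondegeneracy of $[\,\cdot\,,\,\cdot\,]_\Gamma$ on $\lambda I\cap T^c$ for $\lambda\in r(T)\cap r_0(T)$, together with the $*$-symmetry $M_\Gamma(\lambda)=M_\Gamma(\bar\lambda)^*$ that propagates the coincidence to $\bar\lambda$, is what makes this defect-preserving choice consistent with the unitarity of $W$; this construction is the substance of \cite[Theorem~6.7]{Jursenas23}.
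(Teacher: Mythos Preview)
The paper does not prove this lemma; it quotes \cite[Theorem~6.7]{Jursenas23} without argument. Your outline in fact goes further than the paper does: the reduction via $\Gamma'':=\Gamma'\tU$ to two obt's for the same $T^c$, the appeal to Lemma~\ref{lem:iso} for some $W\in\St_1(\msH_\Gamma)$ with $\Gamma''=\Gamma W^{-1}$, and the derivation of $W(\lambda I\cap T^c)+T=(\lambda I\cap T^c)+T$ from $Q(M_\Gamma(\lambda))=M_\Gamma(\lambda)$ are all correct. You are also right that a generic $W$ delivered by Lemma~\ref{lem:iso} need not satisfy b) exactly, and that the point is to \emph{construct} $W$ with this extra constraint built in---which is precisely the substance of the cited theorem. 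So your proposal and the paper agree in ultimately deferring the hard step to \cite{Jursenas23}.

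One inaccuracy in your closing heuristic: on $\lambda I\cap T^c$ the form $[\,\cdot\,,\,\cdot\,]_\Gamma$ restricts to $(x_\lambda,u_\lambda)\mapsto -2(\im\lambda)\,[x_\lambda,u_\lambda]$ on $\Ker_\lambda T^c$, so it vanishes identically for real $\lambda\in r(T)\cap r_0(T)$ and can be degenerate for non-real $\lambda$ in a general Krein space. The mechanism in \cite{Jursenas23} that pins $W$ down on the defect subspaces is therefore not ``nondegeneracy of $[\,\cdot\,,\,\cdot\,]_\Gamma$'' as you phrase it, but rather the explicit $\gamma$-field parametrization together with the symmetry $M_\Gamma(\lambda)=M_\Gamma(\bar\lambda)^*$ on $r_0(T)$.
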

The third lemma is more than enough for our
primary purpose. Yet we include it
as a generalization of similar results in
\cite{Behrndt07,Hassi05,Malamud03}, in hope of
its independent interest.

First, some definitions.
Let $\msG_0$, $\msG_1$
be Hilbert spaces, $\msG=\msG_0\op\msG_1$,
$\msG^\prime=\msG_1\op\msG_0$.
Associated with (arbitrary) relations
$\GB\co\msH_\Gamma\lto\msG$,
$\GA\co\msH_\Gamma\lto\msG^\prime$
are the relations
\begin{align*}
\GB_\#=&\{(\hy,\hh)\in\msH_\Gamma\times\msG^\prime
\vrt
(\forall(\hx,\hl)\in\GB)\,
[\hx\,,\,\hy]_\Gamma=\braket{\hl\,,\,Y\hh}_\msG \}\,,
\\
\GA_\#=&\{(\hx,\hl)\in\msH_\Gamma\times\msG
\vrt
(\forall(\hy,\hh)\in\GA)\,
[\hx\,,\,\hy]_\Gamma=\braket{\hl\,,\,Y\hh}_\msG \}
\end{align*}
where
$Y=\bigl(\begin{smallmatrix}0 & -\img I_0 \\
\img I_1 & 0 \end{smallmatrix}\bigr)\co\msG^\prime\lto
\msG$, $I_0$ ($I_1$) is the identity in $\msG_0$ ($\msG_1$),
and $\braket{\,\,\cdot\,\,,\,\,\cdot\,\,}_\msG$
is the scalar product in the Hilbert sum $\msG$.

The adjoint of $\GB$ $(\GA)$ is given by
\[
(\GB)^c=(Y\GB_\#)^{-1}\quad
((\GA)^c=(Y^{-1}\GA_\#)^{-1})
\]
so that $\GB_\#$, $\GA_\#$ are closed relations.
Clearly
$\GA\subseteq\GB_\#$ iff $\GB\subseteq\GA_\#$,
and one says $\GB$, $\GA$ satisfy the
Green identity.
If $\GB$, $\GA$ are closed relations,
$\GA=\GB_\#$ iff $\GB=\GA_\#$.
\begin{defn}
Let $(A,B)$ be a dp in $\msH$.
If $\GA\subseteq\GB_\#$,
$\bar{\msD}_{\GB}=B^c$,
$\bar{\msD}_{\GA}=A^c$,
then
$((\msG,\GB),(\msG^\prime,\GA))$ is an
isometric boundary pair (ibp) for
$(A,B)$; and if also
$\GA=\GB_\#$, $\GB=\GA_\#$,
then $(\msG,\GB)\equiv((\msG,\GB),(\msG^\prime,\GA))$
is a unitary boundary pair (ubp) for $(A,B)$.
\end{defn}
Because $(A,B)$ is a dp iff such is $(B,A)$,
$((\msG,\GB),(\msG^\prime,\GA))$
is an ibp for $(A,B)$ iff such is
$((\msG^\prime,\GA),(\msG,\GB))$. The associated
Weyl families are $M_{\GB}(\lambda)=\GB(\lambda I)$
and $M_{\GA}(\lambda)=\GA(\lambda I)$,
$\gamma$-fields are
$\gamma_{\GB}(\lambda)=P(\GB_0\vrt\lambda I)^{-1}$
and
$\gamma_{\GA}(\lambda)=P(\GA_0\vrt\lambda I)^{-1}$,
all defined for $\lambda\in\bbC$.
$P\co\msH_\Gamma\lto\msH$,
$\binom{x}{y}\mapsto x$ and
$\GB_0=\{(\hx,l_0)\vrt(\exists l_1)
(\hx,\binom{l_0}{l_1})\in\GB \}$ and
$\GA_0=\{(\hy,h_1)\vrt(\exists h_0)
(\hy,\binom{h_1}{h_0})\in\GA \}$.

In agreement with a common definition
(\eg \cite[Definition~2.5]{Hassi05})
a bt for a dp $(A,B)$
is an ibp $((\msG,\GB),(\msG^\prime,\GA))$ such that
$\GB$, $\GA$ are surjective operators.
In order to see that a bt is a ubp $(\msG,\GB)$ with
surjective $\GB$, $\GA$, by
\cite[Proposition~2.1]{Popovici13}
one only needs to verify
$B\subseteq\Ker\GA$, $A\subseteq\Ker\GB$;
but the latter follows from
the Green identity and
surjectivity of $\GB$, $\GA$.

A bt for a dp $(A,B)$
is denoted by $\Pi=(\msG,\GB)$.
\begin{rem}
Similarly one defines an isometric boundary triple,
resp. a unitary boundary triple (ubt), for a dp,
if $\GB$ and $\GA$ are assumed to be operators.
In this way
a bt for a dp is a ubt such that
$\GB$, $\GA$ have closed ranges
(and are therefore surjective).
\end{rem}
\begin{lem}\label{lem:ibp}
Let $((\msG,\GB),(\msG^\prime,\GA))$
be an ibp for a dp $(A,B)$ in $\msH$ and let
\[
\phB(\lambda)=\left\{\left(y-\lambda x,
\binom{x}{y} \right)
\Bigl\vert
\binom{x}{y}\in A_0
\right\}\,,\quad A_0=\Ker\GB_0\,,
\]
\[
\GB_{10}=P_1
[(\GB)^{-1}\vrt(\{0\}\times\msG_1)]^{-1}
\]
and
$P_1\co\msG\lto\msG_1$,
$\binom{l_0}{l_1}\mapsto l_1$, and
$\lambda\in\bbC$.
Then:
\medskip{}
\begin{SL}
\item[$\mathrm{a)}$]
$\gamma_{\GB}(\lambda)-
\gamma_{\GB}(\lambda_0)=
P\phB(\lambda)(\lambda-\lambda_0)
\gamma_{\GB}(\lambda_0)$
$(\lambda_0\in\bbC)$.
\medskip{}
\item[$\mathrm{b)}$]
$M_{\GB}(\lambda)-M_{\GB}(\lambda_0)=
\GB_{10}\phB(\lambda)
(\lambda-\lambda_0)\gamma_{\GB}(\lambda_0)$
$(\lambda_0\in\bbC)$.
\medskip{}
\item[$\mathrm{c)}$]
$\GB_{10}\phB(\lambda)\subseteq
\gamma_{\GA}(\bar{\lambda})^c$
(the adjoint of $\gamma_{\GA}(\bar{\lambda})$),
with the equality iff
\medskip{}
\item[]
$\msR_{A_0-\lambda I}=
\msD_{\gamma_{\GA}(\bar{\lambda})^c}$ and
$\Ind M_{\GB}(\lambda)=\Ind M_{\GA}(\bar{\lambda})^*$.
\medskip{}
\item[$\mathrm{d)}$]
Let $A_\theta=(\GB)^{-1}(\theta)$;
$\theta$ is a lineal in $\msG$. Then
\[
(A_\theta-\lambda I)^{-1}
-(A_0-\lambda I)^{-1}\subseteq
\gamma_{\GB}(\lambda)
(\theta-M_{\GB}(\lambda))^{-1}
\GB_{10}\phB(\lambda)
\]
with the equality if
$\Ker_\lambda A_0=\Ker_\lambda \Ker\GB$.
\end{SL}
\end{lem}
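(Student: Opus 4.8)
The plan is to read all four identities straight off the definitions, the only preliminary being the observation that $\phB(\lambda)$ is the graph of $A_0=\Ker\GB_0$ written in the parameter $f=y-\lambda x$: thus $\msD_{\phB(\lambda)}=\msR_{A_0-\lambda I}$, $P\phB(\lambda)=(A_0-\lambda I)^{-1}$, and $\GB_{10}$ records the $\msG_1$-boundary value along $A_0$, so that $\GB_{10}\phB(\lambda)$ means ``solve $(A_0-\lambda I)\,x=\,\cdot\,$, then apply $\GB_1$''. All equalities below are equalities of linear relations (so one may not cancel freely), adjoints are $J$-adjoints since $\msH$ is a Krein space, and the single structural fact used repeatedly is the homogeneity of $\phB(\lambda)$ in $f$.

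For a) fix $l_0$ and pick $\hx=\binom{x}{\lambda x}$, $\hx_0=\binom{x_0}{\lambda_0 x_0}$ in $\msD_{\GB}\cap(\lambda I)$ resp.\ $\msD_{\GB}\cap(\lambda_0 I)$ over $l_0$, i.e.\ with $(\hx,\binom{l_0}{l_1})\in\GB$ and $(\hx_0,\binom{l_0}{l_1'})\in\GB$. The $\GB$-image of $\hx-\hx_0$ then has vanishing $\msG_0$-part, so $\hx-\hx_0\in A_0$, and its parameter is $(\lambda-\lambda_0)x_0$; applying $P\phB(\lambda)$ gives $x-x_0\in P\phB(\lambda)(\lambda-\lambda_0)\gamma_{\GB}(\lambda_0)l_0$, one inclusion. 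Conversely, adding to $\hx_0$ an element of $A_0$ with parameter $(\lambda-\lambda_0)x_0$ produces an element of $\msD_{\GB}\cap(\lambda I)$ over $l_0$, giving the reverse inclusion, hence a). Part b) is the same bookkeeping with the $\msG_1$-component kept: for $(\hx,\binom{l_0}{h})\in\GB$ and $(\hx_0,\binom{l_0}{h_0})\in\GB$ the $\GB$-image of $\hx-\hx_0\in A_0$ is $\binom{0}{h-h_0}$, so $h-h_0\in\GB_{10}\phB(\lambda)(\lambda-\lambda_0)\gamma_{\GB}(\lambda_0)l_0$, and again the reverse inclusion follows by adding back.

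For c) the inclusion is the Green identity $\GA\subseteq\GB_\#$: taking $(\binom{x}{y},\binom{0}{l_1})\in\GB$ with $\binom{x}{y}\in A_0$ and $(\binom{v}{\bar{\lambda} v},\binom{h_1}{h_0})\in\GA$, the form of $[\,\cdot\,,\,\cdot\,]_\Gamma$ on $\msH_\Gamma$ and of $Y$ make the defining relation of $\GB_\#$ collapse to $[\,y-\lambda x\,,\,v\,]_\msH=\braket{l_1,h_1}_{\msG_1}$, which says exactly $(y-\lambda x,l_1)\in\gamma_{\GA}(\bar{\lambda})^c$. For the equality I would invoke that a linear relation is recovered from one it contains as soon as the domains and the multivalued parts agree; here $\msD_{\GB_{10}\phB(\lambda)}=\msR_{A_0-\lambda I}$ and $\mathrm{mul}\,\GB_{10}\phB(\lambda)=\mathrm{mul}\,M_{\GB}(\lambda)$, whereas $\mathrm{mul}\,\gamma_{\GA}(\bar{\lambda})^c=(\overline{\msD_{\gamma_{\GA}(\bar{\lambda})}})^\perp=\mathrm{mul}\,M_{\GA}(\bar{\lambda})^*$ (using $\msD_{\gamma_{\GA}(\bar{\lambda})}=\msD_{M_{\GA}(\bar{\lambda})}$), so the two displayed conditions are precisely the domain-match and the index-match that, together with the inclusion just proved, force $\GB_{10}\phB(\lambda)=\gamma_{\GA}(\bar{\lambda})^c$.

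Part d) is Krein's resolvent formula, read off in the same spirit: given $\hx\in A_\theta$ with $(\hx,\hl)\in\GB$, $\hl\in\theta$, and $\hx_0\in A_0$ sharing the parameter $f$ of $\hx$, the difference $\hx-\hx_0$ lies in $\msD_{\GB}\cap(\lambda I)$, so its $\GB_0$-value $l_0$ is acted on by $\gamma_{\GB}(\lambda)$, returning the first component of $\hx-\hx_0$, and by $M_{\GB}(\lambda)$; combining $\hl\in\theta$ with $(\hx_0,\binom{0}{l_1'})\in\GB$ yields $l_1'\in\GB_{10}\phB(\lambda)f$ and $l_0\in(\theta-M_{\GB}(\lambda))^{-1}l_1'$, hence the inclusion. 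Running this backwards needs the $\GB$-preimages chosen compatibly, which is possible exactly when $M_{\GB}(\lambda)$ has trivial multivalued part via $\GB_1$---equivalently $\Ker_\lambda A_0=\Ker_\lambda\Ker\GB$---so that the one ambiguity disappears and equality holds. I expect the equality halves, above all that in c) where one must control domains and multivalued parts of $J$-adjoints of relations that in general are neither densely defined nor operators, to be the only genuine obstacle; the rest is bookkeeping once the homogeneity of $\phB(\lambda)$ is in hand.
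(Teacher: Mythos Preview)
Your approach coincides with the paper's: both arguments chase elements through the definitions, using that $\hx_\lambda-\hx_{\lambda_0}\in A_0$ for a) and b), the Green identity $\GA\subseteq\GB_\#$ for the inclusion in c), and the domain/multivalued-part matching principle (which the paper cites as \cite[Proposition~2.1]{Popovici13}) for the equality in c). Your converse in b) is in fact a shade cleaner than the paper's, since by taking the \emph{specific} $\hat w\in A_0$ with $(\hat w,\binom{0}{l_1'})\in\GB$ and adding it directly to $\hx_{\lambda_0}$ you avoid having to invoke $\Ind\GB_{10}\subseteq\Ind M_{\GB}(\lambda)$ separately.

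One inaccuracy to flag: your gloss on the equality hypothesis in d) is not right. The condition $\Ker_\lambda A_0=\Ker_\lambda\Ker\GB$ does \emph{not} say that $M_{\GB}(\lambda)$ has trivial multivalued part. The actual obstruction in the converse is this: from $(l_0,z)\in\gamma_{\GB}(\lambda)$ you get some $x_\lambda\in\msN_\lambda$ with $z=x_\lambda$, while from $(l_0,l_1-l_1')\in M_{\GB}(\lambda)$ you get a possibly different $y_\lambda\in\msN_\lambda$ with $(\hy_\lambda,\binom{l_0}{l_1-l_1'})\in\GB$; their difference lies in $\Ker_\lambda A_0$, and you need it to lie in $\Ker_\lambda\Ker\GB$ so that $(\hx_\lambda-\hy_\lambda,0)\in\GB$ and hence $(\hx_\lambda,\binom{l_0}{l_1-l_1'})\in\GB$. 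That is exactly what the hypothesis furnishes. So ``choosing $\GB$-preimages compatibly'' is the right intuition, but the link you draw to $\Ind M_{\GB}(\lambda)$ is spurious.
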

\begin{proof}
For short
$\hx_\lambda=\binom{x_\lambda}{\lambda x_\lambda}=
(x_\lambda,\lambda x_\lambda)$
if $x_\lambda\in\msN_\lambda=\Ker_\lambda\msD_{\GB}$,
$\lambda\in\bbC$.

a) If $(l_0,x)\in
\gamma_{\GB}(\lambda)-\gamma_{\GB}(\lambda_0)$
then $x=x_\lambda-x_{\lambda_0}$ for some
$x_\lambda\in\msN_\lambda$, $x_{\lambda_0}\in\msN_{\lambda_0}$
such that
$(\hx_\lambda,l_0)\in\GB_0$,
$(\hx_{\lambda_0},l_0)\in\GB_0$;
hence
$(l_0,\hx)\in
\phB(\lambda)(\lambda-\lambda_0)\gamma_{\GB}(\lambda_0)$ for
$\hx=(x,y)\in A_0$,
$y=\lambda x+(\lambda-\lambda_0)x_{\lambda_0}$.

Conversely,
$(l_0,x)\in
P\phB(\lambda)(\lambda-\lambda_0)\gamma_{\GB}(\lambda_0)$
implies $(\exists y)$
$(\exists x_{\lambda_0}\in\msN_{\lambda_0})$
$(x,y)\in A_0$,
$(l_0,x_{\lambda_0})\in\gamma_{\GB}(\lambda_0)$,
$(\lambda-\lambda_0)x_{\lambda_0}=y-\lambda x$.
Because $(x+x_{\lambda_0},\lambda x+
\lambda x_{\lambda_0})=
(x,y)+\hx_{\lambda_0}\in\msD_{\GB}$, one has
$x+x_{\lambda_0}=x_\lambda\in\msN_\lambda$, so it remains
to verify $(l_0,x_\lambda)\in\gamma_{\GB}(\lambda)$:
Because $(\exists l^\prime_0\in\msG_0)$
$(l^\prime_0,x_\lambda)\in
\gamma_{\GB}(\lambda)$, it follows that
$l^\prime_0-l_0\in\GB_0(A_0)=\Ind\GB_0$, so
$(\hx_\lambda,l_0)\in\GB_0$ \ie
$(l_0,x_\lambda)\in\gamma_{\GB}(\lambda)$.

b) $M_{\GB}(\lambda)-M_{\GB}(\lambda_0)$ consists of those
$(l_0,l_1-h_1)\in\msG$ such that
$(\exists x_\lambda\in\msN_\lambda)$
$(\exists x_{\lambda_0}\in\msN_{\lambda_0})$
$(\hx_\lambda,\binom{l_0}{l_1})\in\GB$,
$(\hx_{\lambda_0},\binom{l_0}{h_1})\in\GB$;
so
$\hx=(x,y)=(x_\lambda-x_{\lambda_0},
\lambda x_\lambda-\lambda_0x_{\lambda_0})\in A_0$
and
$(\hx,l_1-h_1)\in\GB_{10}$. Since
$y-\lambda x=(\lambda-\lambda_0)x_{\lambda_0}$,
$(l_0,x_{\lambda_0})\in\gamma_{\GB}(\lambda_0)$, so
$(l_0,y-\lambda x)\in
(\lambda-\lambda_0)\gamma_{\GB}(\lambda_0)$ \ie
$(l_0,l_1-h_1)\in\GB_{10}
\phB(\lambda)(\lambda-\lambda_0)\gamma_{\GB}(\lambda_0)$.

Conversely,
$\GB_{10}\phB(\lambda)
(\lambda-\lambda_0)\gamma_{\GB}(\lambda_0)$ consists
of those $(l_0,l^\prime_1)\in\msG$ such that
$(\exists\hx=(x,y)\in A_0)$
$(l_0,\hx)\in\phB(\lambda)
(\lambda-\lambda_0)\gamma_{\GB}(\lambda_0)$
and $(\hx,l^\prime_1)\in\GB_{10}$; hence
$(\exists x_{\lambda_0}\in\msN_{\lambda_0})$
$(l_0,x_{\lambda_0})\in\gamma_{\GB}(\lambda_0)$,
$(\lambda-\lambda_0)x_{\lambda_0}=y-\lambda x$.
As in a)
this shows $x+x_{\lambda_0}=x_\lambda\in\msN_\lambda$
and $(l_0,x_\lambda)\in\gamma_{\GB}(\lambda)$.
Since $(\exists l_1,h_1\in\msG_1)$
$(\hx_\lambda,\binom{l_0}{l_1})\in\GB$,
$(\hx_{\lambda_0},\binom{l_0}{h_1})\in\GB$,
so $(l_0,l_1)\in M_{\GB}(\lambda)$,
$(l_0,h_1)\in M_{\GB}(\lambda_0)$,
$(\hx,l_1-h_1)\in\GB_{10}$, and
$l^\prime_1-(l_1-h_1)\in\Ind\GB_{10}$;
since
$\Ind\GB_{10}(=\Ind\Ind\GB)\subseteq
\GB_{10}(\lambda I)=\Ind M_{\GB}(\lambda)$,
$(l_0,l^\prime_1)\in M_{\GB}(\lambda)-M_{\GB}(\lambda_0)$.

c) Let $[\,\,\cdot\,\,,\,\,\cdot\,\,]$ be an indefinite
metric in $\msH$ and let
$\braket{\,\,\cdot\,\,,\,\,\cdot\,\,}_1$ be a
Hilbert metric in $\msG_1$.
To see $\GB_{10}\phB(\lambda)\subseteq
\gamma_{\GA}(\bar{\lambda})^c$, \ie
$(\forall(u,l_1)\in\GB_{10}\phB(\lambda))$
$(\forall(h_1,x_{\bar{\lambda}})\in
\gamma_{\GA}(\bar{\lambda}))$
$[x_{\bar{\lambda}}\,,\,u]=\braket{h_1\,,\,l_1}_1$,
use $\GA\subseteq\GB_\#$,
$u=y-\lambda x$,
$\hx=(x,y)\in A_0$,
$(\hx,\binom{0}{l_1})\in\GB$.

For the inclusion to become the equality apply
\eg \cite[Proposition~2.1]{Popovici13}.

d) $(u,y-x)\in(A_\theta-\lambda I)^{-1}-(A_0-\lambda I)^{-1}$
implies
$(\exists(l_0,l_1)\in\theta)$
$(\exists h_1\in\msG_1)$
$(\binom{x}{u+\lambda x},\binom{0}{h_1})\in\GB$,
$(\binom{y}{u+\lambda y},\binom{l_0}{l_1})\in\GB$;
so $(u,y-x)=(u,x_\lambda)\in
\msR_{A_0-\lambda I}\times\msN_\lambda$,
$(\hx_\lambda,\binom{l_0}{l_1-h_1})\in\GB$,
$(u,h_1)\in\GB_{10}\phB(\lambda)$,
$(l_0,x_\lambda)\in\gamma_{\GB}(\lambda)$,
$(l_0,l_1-h_1)\in M_{\GB}(\lambda)$ \ie
$(h_1,l_0)\in(\theta-M_{\GB}(\lambda))^{-1}$,
so
$(u,y-x)\in \gamma_{\GB}(\lambda)
(\theta-M_{\GB}(\lambda))^{-1}
\GB_{10}\phB(\lambda)$.

Conversely,
$\gamma_{\GB}(\lambda)(\theta-M_{\GB}(\lambda))^{-1}
\GB_{10}\phB(\lambda)$ consists of
$(y^\prime-\lambda x,x_\lambda)\in
\msR_{A_0-\lambda I}\times\msN_\lambda$
such that $(\exists(l_0,l_1)\in\theta)$
$(\exists h_1\in\msG_1)$
$(\binom{x}{y^\prime},\binom{0}{h_1})\in\GB$,
$(l_0,l_1-h_1)\in M_{\GB}(\lambda)$,
$(l_0,x_\lambda)\in\gamma_{\GB}(\lambda)$.
Thus $(\exists y_\lambda\in\msN_\lambda)$
$(\exists l^\prime_1\in\msG_1)$
$(\hy_\lambda,\binom{l_0}{l_1-h_1})\in\GB$,
$(\hx_\lambda,\binom{l_0}{l^\prime_1})\in\GB$,
so
$x_\lambda-y_\lambda\in\Ker_\lambda A_0$ and then
$(\hx_\lambda,\binom{l_0}{l_1-h_1})\in\GB\hsum
((\lambda I\cap A_0)\times\{0\})$. If
$\Ker_\lambda A_0=\Ker_\lambda \Ker\GB$ then
$(\hx_\lambda,\binom{l_0}{l_1-h_1})\in\GB$ and
$h_1+l^\prime_1-l_1\in\Ind\Ind\GB=\Ind\GB_{10}$.
With $y=x+x_\lambda$ and $u=y^\prime-\lambda x$,
by
$(\binom{y}{u+\lambda y},\binom{l_0}{h_1+l^\prime_1})\in\GB$
we get
$(\binom{y}{u+\lambda y},\binom{l_0}{l_1})\in\GB$, \ie
$(y^\prime-\lambda x,x_\lambda)=(u,y-x)\in
(A_\theta-\lambda I)^{-1}-(A_0-\lambda I)^{-1}$.
\end{proof}
\begin{cor}
Let $\lambda\in\bbC\setm\sigma_p(A_0)$. Then
\begin{SL}
\item[$\mathrm{a)}$]
$(A_\theta-\lambda I)^{-1}
\vrt\msR_{A_0-\lambda I}=
(A_0-\lambda I)^{-1}+
\gamma_{\GB}(\lambda)(\theta-M_{\GB}(\lambda))^{-1}
\GB_{10}\phB(\lambda)$
(\cf \cite[Theorem~5.8]{Derkach17}).
\item[$\mathrm{b)}$]
$\lambda\notin\sigma_p(A_\theta)$ iff
$\msD_{\theta \cap M_{\GB}(\lambda)}\subseteq\Ind\GB_0$.
\item[$\mathrm{c)}$]
If $\lambda\in\rho(A_0)$ then
$\lambda\in\rho(A_\theta)$ iff
$\msD_{\theta \cap M_{\GB}(\lambda)}\subseteq\Ind\GB_0$
and
$\msR_{\GB_{10}}\subseteq\msR_{\theta-M_{\GB}(\lambda)}$.
\end{SL}
\end{cor}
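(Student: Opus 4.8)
The plan is to read off all three items from Lemma~\ref{lem:ibp}, treating (a), (b), (c) in turn with each using its predecessors. \emph{For (a)}: since $\Ker\GB\subseteq\Ker\GB_0=A_0$, the hypothesis $\lambda\notin\sigma_p(A_0)$ forces $\Ker_\lambda\Ker\GB=\Ker_\lambda A_0=\{0\}$, so the equality alternative in Lemma~\ref{lem:ibp}\,d) is available:
\[
(A_\theta-\lambda I)^{-1}-(A_0-\lambda I)^{-1}=\gamma_{\GB}(\lambda)(\theta-M_{\GB}(\lambda))^{-1}\GB_{10}\phB(\lambda).
\]
Moreover $(A_0-\lambda I)^{-1}$ is then single-valued, and for any relation $S$ and single-valued relation $R$ one has the elementary identity $(S-R)+R=S\vrt\msD_R$; applying it with $R=(A_0-\lambda I)^{-1}$, $S=(A_\theta-\lambda I)^{-1}$, so that $\msD_R=\msR_{A_0-\lambda I}$, turns the display into the asserted formula.

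\emph{For (b)}: here $\lambda\notin\sigma_p(A_\theta)$ means $\Ker_\lambda A_\theta=\{0\}$, and $A_\theta=(\GB)^{-1}(\theta)$. If $x_\lambda\in\Ker_\lambda A_\theta$, pick $\binom{l_0}{l_1}\in\theta$ with $(\hx_\lambda,\binom{l_0}{l_1})\in\GB$ ($\hx_\lambda=(x_\lambda,\lambda x_\lambda)$ as in the proof of Lemma~\ref{lem:ibp}); then $l_0\in\msD_{\theta\cap M_{\GB}(\lambda)}$, and if the latter lies in $\Ind\GB_0$ we obtain $(0,\binom{l_0}{l_1'})\in\GB$ for some $l_1'$, hence $(\hx_\lambda,\binom{0}{l_1-l_1'})\in\GB$, i.e.\ $\hx_\lambda\in A_0$ and $x_\lambda\in\Ker_\lambda A_0=\{0\}$. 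Conversely, if $l_0\in\msD_{\theta\cap M_{\GB}(\lambda)}$, choose $(l_0,l_1)\in\theta\cap M_{\GB}(\lambda)$ and $x_\lambda\in\msN_\lambda$ with $(\hx_\lambda,\binom{l_0}{l_1})\in\GB$; then $\hx_\lambda\in A_\theta$, so $\lambda\notin\sigma_p(A_\theta)$ forces $x_\lambda=0$, whence $\binom{l_0}{l_1}\in\Ind\GB$ and $l_0\in\Ind\GB_0$.

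\emph{For (c)}: with $\lambda\in\rho(A_0)$ we have $\msR_{A_0-\lambda I}=\msH$, so (a) becomes the resolvent identity $(A_\theta-\lambda I)^{-1}=(A_0-\lambda I)^{-1}+\gamma_{\GB}(\lambda)(\theta-M_{\GB}(\lambda))^{-1}\GB_{10}\phB(\lambda)$ on $\msH$. Now $\lambda\in\rho(A_\theta)$ amounts to $A_\theta-\lambda I$ being injective, $\msR_{A_\theta-\lambda I}=\msH$, and $(A_\theta-\lambda I)^{-1}$ bounded. Injectivity is $\lambda\notin\sigma_p(A_\theta)$, which by (b) is the first stated condition. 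Taking domains in the resolvent identity (using that $(A_0-\lambda I)^{-1}$ is everywhere defined) gives $\msR_{A_\theta-\lambda I}=\msD_{\gamma_{\GB}(\lambda)(\theta-M_{\GB}(\lambda))^{-1}\GB_{10}\phB(\lambda)}$; since $\phB(\lambda)$ maps $\msH$ onto $A_0=\msD_{\GB_{10}}$ with $\GB_{10}(\hx)\neq\emptyset$ for each $\hx\in A_0$, and $\msD_{M_{\GB}(\lambda)}=\msD_{\gamma_{\GB}(\lambda)}$, this domain equals $\{u\in\msH:\GB_{10}\phB(\lambda)(u)\cap\msR_{\theta-M_{\GB}(\lambda)}\neq\emptyset\}$, which is all of $\msH$ iff every coset $\GB_{10}(\hx)$ of $\Ind\GB_{10}$, $\hx\in A_0$, meets $\msR_{\theta-M_{\GB}(\lambda)}$, i.e.\ iff $\msR_{\GB_{10}}\subseteq\msR_{\theta-M_{\GB}(\lambda)}+\Ind\GB_{10}$; and since $\Ind\GB_{10}\subseteq\Ind M_{\GB}(\lambda)\subseteq\msR_{\theta-M_{\GB}(\lambda)}$ (using $0\in\theta$), this is exactly the second stated condition. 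Finally, under both conditions $(A_\theta-\lambda I)^{-1}$ is bounded, $(A_0-\lambda I)^{-1}$ being bounded and the correction term being controlled via Lemma~\ref{lem:ibp}\,c), which places $\GB_{10}\phB(\lambda)$ inside the closed relation $\gamma_{\GA}(\bar\lambda)^c$. I expect this last point --- checking that the correction term, equivalently $A_\theta$, is genuinely closed/bounded in the Krein space and not merely everywhere defined --- to be the only real obstacle; parts (a), (b) and the domain count in (c) are routine bookkeeping with the relations of Lemma~\ref{lem:ibp}.
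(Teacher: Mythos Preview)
Your derivations of (a), (b), and the bijectivity part of (c) are correct and are exactly what the paper intends: the corollary is stated without proof precisely because each item is a direct reading of Lemma~\ref{lem:ibp}. In (a) you correctly invoke the equality case of Lemma~\ref{lem:ibp}\,d) via $\Ker_\lambda A_0=\{0\}$ and the single-valued $(A_0-\lambda I)^{-1}$; (b) is the obvious two-way chase you give; and in (c) your domain computation for the correction term, including the reduction $\msR_{\GB_{10}}\subseteq\msR_{\theta-M_{\GB}(\lambda)}+\Ind\GB_{10}=\msR_{\theta-M_{\GB}(\lambda)}$ using $\Ind\GB_{10}\subseteq\Ind M_{\GB}(\lambda)$, is clean and correct.

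The only point that deserves comment is the boundedness issue you flag in (c). Your instinct is right that this does not come for free, but your proposed fix through Lemma~\ref{lem:ibp}\,c) will not work: the inclusion $\GB_{10}\phB(\lambda)\subseteq\gamma_{\GA}(\bar\lambda)^c$ controls only one factor, while $(\theta-M_{\GB}(\lambda))^{-1}$ and $\gamma_{\GB}(\lambda)$ need not be bounded in the generality of an ibp with an arbitrary lineal $\theta$. In fact, for $\theta$ merely a lineal $A_\theta$ need not be closed (see Proposition~\ref{prop:M}\,a) for the relevant closedness criteria), and then $\rho(A_\theta)=\emptyset$ in the strict sense while the two displayed conditions may well hold. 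So either one reads $\rho(A_\theta)$ here as ``$A_\theta-\lambda I$ bijective'' (which your argument fully establishes), or one carries an implicit hypothesis that $A_\theta$ is closed---in which case boundedness follows from the closed graph theorem once you have shown $(A_\theta-\lambda I)^{-1}$ is an everywhere defined operator. The paper, by leaving the corollary unproved, does not resolve this ambiguity; your identification of the issue is accurate, but it is a subtlety in the \emph{statement} rather than a gap in your \emph{argument}.
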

\begin{rem}
$\GB_{10}\subseteq\GB_1\vrt A_0$
(see Remark~\ref{rem:bt} for the definition of $\GB_1$),
with the equality iff
$\msR_{\Ind\GB}=\Ind\Ind\GB$.
\end{rem}
In the next proposition
$A_\theta$, $M_{\GB}$ formally are as above, but for
arbitrary $\GB$.
\begin{prop}\label{prop:M}
(\cite[Lemma~2.1]{Jursenas24})
\begin{SL}
\item[$\mathrm{a)}$]
$A^c_\theta\supseteq(\GB_\#)^{-1}(\theta^*)$;
if $\GB$ is a closed relation and $\theta$,
$\theta\hsum\msR_{\GB}$ are subspaces in $\msG$,
the inclusion becomes
the equality; and if moreover $\theta^*\hsum\msR_{\GB_\#}$
is a subspace in $\msG^\prime$,
$A_\theta$ is a subspace in $\msH_\Gamma$.
\item[$\mathrm{b)}$]
$M_{\GB}(\lambda)^*\supseteq\GB_\#(\bar{\lambda}I)$
$(\lambda\in\bbC)$, with the equality if
$\GB$ is a closed relation and $\msD_{\GB}-\lambda I$
has closed range; if the latter and also
$\msD_{\GB_\#}-\bar{\lambda}I$ has closed range,
then $M_{\GB}(\lambda)$ is a subspace in $\msG$.
\end{SL}
\end{prop}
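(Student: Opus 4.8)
The plan is to handle each clause in two moves: an ``$\supseteq$'' that is a one–line definition chase from the Green–type identity defining $\GB_\#$, and a converse ``$\subseteq$'' that is an adjoint–of–a–restriction identity for closed linear relations, for which the stated range conditions are precisely what is needed; the two ``subspace'' assertions then follow by applying the resulting equalities once more with $\GB$ replaced by the (automatically closed) relation $\GB_\#$, using $(\GB_\#)_\#=\ol{\GB}=\GB$ and $S^{cc}=\ol{S}$.

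For a), write $\hl=\binom{l_0}{l_1}\in\msG$ and $\hh=\binom{h_1}{h_0}\in\msG^\prime$. \emph{Inclusion}: if $(\hat u,\hh)\in\GB_\#$ with $\hh\in\theta^*$ and $\hx\in A_\theta$, pick $\hl\in\theta$ with $(\hx,\hl)\in\GB$; then $[\hx,\hat u]_\Gamma=\braket{\hl\,,\,Y\hh}_\msG$, and the block form of $Y$ gives $\braket{\hl\,,\,Y\hh}_\msG=\img\bigl(\braket{l_0\,,\,h_0}_{\msG_0}-\braket{l_1\,,\,h_1}_{\msG_1}\bigr)$, which is $0$ because $\hl\in\theta$ and $\hh\in\theta^*$ entail $\braket{l_1\,,\,h_1}_{\msG_1}=\braket{l_0\,,\,h_0}_{\msG_0}$; hence $\hat u\in A_\theta^c$. \emph{Converse}: let $\hat u\in A_\theta^c$. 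As $0\in\theta$, $\Ker\GB\subseteq A_\theta$, so $\hl\mapsto[\hx,\hat u]_\Gamma$ (any $(\hx,\hl)\in\GB$) is a well–defined linear functional on $\msR_{\GB}$ vanishing on $\theta\cap\msR_{\GB}$; extend it by $0$ on $\theta$ to $\theta\hsum\msR_{\GB}$. Using that $\GB$ is closed and that $\theta$, $\theta\hsum\msR_{\GB}$ are subspaces, this functional is bounded (the crucial point; see below); Riesz, after a Hahn--Banach extension to $\msG$, represents it by some $\hat b$, and $\hh:=Y^{-1}\hat b$ then gives $(\hat u,\hh)\in\GB_\#$ with $\hh\in\theta^*$ — the latter because $\hat b\perp\theta$, which through the block form of $Y^{-1}$ is exactly membership of $Y^{-1}\hat b$ in $\theta^*$. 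Hence $A_\theta^c=(\GB_\#)^{-1}(\theta^*)$. If also $\theta^*$, $\theta^*\hsum\msR_{\GB_\#}$ are subspaces, apply this equality to $(\GB_\#,\theta^*)$: since $\GB_\#$ is closed, $(\GB_\#)_\#=\GB$ and $\theta^{**}=\theta$, we get $A_\theta^{cc}=\GB^{-1}(\theta)=A_\theta$, and $A_\theta^{cc}=\ol{A_\theta}$ shows $A_\theta$ is a subspace.

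Clause b) is parallel. Put $\hx_\lambda=(x,\lambda x)$ with $x\in\msN_\lambda$ and $\hy_{\bar\lambda}=(y,\bar\lambda y)$. Two structural facts about the $[\,\cdot\,,\,\cdot\,]_\Gamma$ of \cite[Section~2.1]{Azizov89} drive the argument: $[\hx_\lambda,\hy_{\bar\lambda}]_\Gamma=0$ (it is a multiple of $(\lambda-\lambda)[x,y]$), and $[\hx,\hy_{\bar\lambda}]_\Gamma$ is a fixed nonzero multiple of $[x_2-\lambda x_1,y]$ for $\hx=(x_1,x_2)\in\msD_{\GB}$. \emph{Inclusion}: for $(\hy_{\bar\lambda},\hh)\in\GB_\#$ with $y\in\Ker_{\bar\lambda}\msD_{\GB_\#}$ and $(\hx_\lambda,\hl)\in\GB$, $\braket{\hl\,,\,Y\hh}_\msG=[\hx_\lambda,\hy_{\bar\lambda}]_\Gamma=0$, which reads $\braket{l_1\,,\,h_1}_{\msG_1}=\braket{l_0\,,\,h_0}_{\msG_0}$, so $\hh\in M_{\GB}(\lambda)^*$. \emph{Converse}: let $\hh\in M_{\GB}(\lambda)^*$. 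For $(\hx,\hl)\in\GB$, $\hx=(x_1,x_2)$, put $u=x_2-\lambda x_1$; the value $G(u):=\braket{l_0\,,\,h_0}_{\msG_0}-\braket{l_1\,,\,h_1}_{\msG_1}$ is independent of the choice of $(\hx,\hl)$, since two choices with the same $u$ differ in $\hl$ by an element of $M_{\GB}(\lambda)$, annihilated by $\hh$; so $G$ is a well–defined linear functional on the range of $\msD_{\GB}-\lambda I$. Using that $\GB$ is closed and that $\msD_{\GB}-\lambda I$ has closed range, $G$ is bounded (again the crucial point); representing a fixed scalar multiple of $G$ on that closed subspace of $\msH$ through a fundamental symmetry produces $y\in\msH$ with $[\hx,\hy_{\bar\lambda}]_\Gamma=\braket{\hl\,,\,Y\hh}_\msG$ for all $(\hx,\hl)\in\GB$, \ie $(\hy_{\bar\lambda},\hh)\in\GB_\#$ with $y\in\Ker_{\bar\lambda}\msD_{\GB_\#}$; hence $\hh\in\GB_\#(\bar\lambda I)$. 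Thus $M_{\GB}(\lambda)^*=\GB_\#(\bar\lambda I)$; applying this to $(\GB_\#,\bar\lambda)$ in place of $(\GB,\lambda)$, using $(\GB_\#)_\#=\GB$ and $M_{\GB}(\lambda)^{**}=\ol{M_{\GB}(\lambda)}$, shows $M_{\GB}(\lambda)$ is a subspace once $\msD_{\GB_\#}-\bar\lambda I$ also has closed range.

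The \emph{main obstacle} is the boundedness step in each converse inclusion: one must deduce from the closedness of $\GB$ together with ``$\theta\hsum\msR_{\GB}$ a subspace'' (resp.\ ``$\msD_{\GB}-\lambda I$ of closed range'') that the constructed functional on $\theta\hsum\msR_{\GB}$ (resp.\ on the range of $\msD_{\GB}-\lambda I$) is continuous. The route I would take is a closed–graph argument applied to the operator part of $\GB$, obtained by quotienting out $\Ker\GB$ and $\Ind\GB$ (resp.\ $\lambda I\cap\msD_{\GB}$), whose range– (resp.\ domain–) closedness is exactly what the hypothesis supplies; one then verifies that the Riesz representative automatically meets the side constraint — membership in $\theta^*$ via the block form of $Y^{-1}$, and the special form $(y,\bar\lambda y)$ by construction. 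Granting this, everything else is bookkeeping with the definitions of $\GB_\#$, $A_\theta$, $M_{\GB}$ and with $(\GB_\#)_\#=\ol{\GB}$, $S^{cc}=\ol{S}$.
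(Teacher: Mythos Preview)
The paper does not prove this proposition here; it is quoted from \cite[Lemma~2.1]{Jursenas24}. Your outline is correct in spirit and the two inclusions $\supseteq$, as well as the ``double-adjoint'' reductions $(\GB_\#)_\#=\ol{\GB}$, $\theta^{**}=\ol\theta$, $M_{\GB}(\lambda)^{**}=\ol{M_{\GB}(\lambda)}$ used for the closedness assertions, are exactly right.

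The one place that needs to be written out rather than merely announced is the boundedness step, and the clean way to do it is slightly different from what you sketch. In a), do not pass to the ``operator part of $\GB$'': instead view $\GB$ itself as a Banach space (a closed subspace of $\msH_\Gamma\times\msG$) and consider the bounded surjection $\pi\co\GB\lto(\theta\hsum\msR_{\GB})/\theta$, $(\hx,\hl)\mapsto[\hl]$; the target is Banach precisely because $\theta$ and $\theta\hsum\msR_{\GB}$ are closed. The bounded functional $(\hx,\hl)\mapsto[\hx,\hat u]_\Gamma$ on $\GB$ vanishes on $\Ker\pi=\GB\cap(\msH_\Gamma\times\theta)$ (this is where $\hat u\in A_\theta^c$ is used), hence factors through $\pi$; by the open mapping theorem the induced functional on $(\theta\hsum\msR_{\GB})/\theta$ is bounded, and after lifting and Hahn--Banach/Riesz you get $\hat b\in\theta^\perp$. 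Your observation that $\hat b\perp\theta$ is equivalent, via the block form of $Y^{-1}$, to $Y^{-1}\hat b\in\theta^*$ is correct and finishes the argument. The same device works in b): the bounded map $\GB\lto\msR_{\msD_{\GB}-\lambda I}$, $((x_1,x_2),\hl)\mapsto x_2-\lambda x_1$, is surjective onto a Banach space by hypothesis, and the functional $(\hx,\hl)\mapsto\braket{l_0,h_0}_{\msG_0}-\braket{l_1,h_1}_{\msG_1}$ vanishes on its kernel $\GB\cap((\lambda I)\times\msG)$ exactly because $\hh\in M_{\GB}(\lambda)^*$; open mapping again gives boundedness, and the Riesz representative $y$ (through a fundamental symmetry $J$) produces $(\hy_{\bar\lambda},\hh)\in\GB_\#$ with the correct scalar factor.

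In short: your plan is right, but the continuity step should be argued via open mapping on the closed graph $\GB$ directly, not via an ``operator part'' whose domain need not be closed.
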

\begin{rem}
By $\GB(A_\theta)=(\theta\cap\msR_{\GB})\hsum
\Ind\GB$,
in an ibp $((\msG,\GB),(\msG^\prime,\GA))$ for a
dp $(A,B)$
the set of relations
$A^\sim$: $\Ker\GB\subseteq A^\sim\subseteq B^c$
is 1-1 with the set of relations
$\theta$: $\Ind\GB\subseteq\theta\subseteq\msR_{\GB}$
via $A^\sim=A_\theta$, $\theta=\GB(A^\sim)$.
\end{rem}
\begin{proof}[Proof of Theorem~\ref{thm:obt}]
By hypothesis $\{\msU\cup\msU^\prime,M\}$,
$\{\msU^*\cup\msU^{\prime*},M_1\}$
are holomorphic elements, where
$M(\lambda)=M_\Gamma(\lambda)$ and
$M_1(\bar{\lambda})=M_\Gamma(\bar{\lambda})$
if $\lambda\in\msU$,
and
$M(\lambda)=M_{\Gamma^\prime}(\lambda)$ and
$M_1(\bar{\lambda})=M_{\Gamma^\prime}(\bar{\lambda})$
if $\lambda\in\msU^\prime$;
notice that the operator parts and multivalued parts
coincide separately.
By Lemma~\ref{lem:ibp} b), c)
$\{\msU,\lambda\mapsto
\gamma_\Gamma(\bar{\lambda}_0)^c\gamma_\Gamma(\lambda) \}$,
$\{\msU^{\prime},
\lambda\mapsto\gamma_{\Gamma^\prime}(\bar{\lambda}_0)^c
\gamma_{\Gamma^\prime}(\lambda) \}$
are holomorphic elements for each fixed
$\bar{\lambda}_0\in\msU\cap\msU^\prime$,
which are continuations
of each other via $M_1(\lambda_0)-M(\lambda)$,
and similarly for other (three) possible cases
of $\lambda,\lambda_0$ from $(\msU\cap\msU^\prime)\cup
(\msU\cap\msU^\prime)^*$.
Letting
$U\co
\msL=\Lin\{\Ker_\lambda T^c\vrt\lambda\in\msU\cup\msU^* \}
\lto
\Lin\{\Ker_\lambda T^{\prime c}\vrt
\lambda\in\msU^\prime\cup\msU^{\prime*}\}$,
$\sum_j(\gamma^s_\Gamma(\lambda_j)l_j+
\delta_\Gamma(\lambda_j)h_j)\mapsto\sum_j
(\gamma^s_{\Gamma^\prime}(\lambda_j)l_j+
\delta_{\Gamma^\prime}(\lambda_j)h_j)$---where
one uses that each $x_\lambda\in\Ker_\lambda T^c$
is of the form
$x_\lambda=\xi_\lambda+x^0_\lambda$, with
$\xi_\lambda=\gamma^s_\Gamma(\lambda)l$
($l\in\Gamma_0(\lambda I)$),
where $\gamma^s_\Gamma(\lambda)=
P(\Gamma_0\vrt\{(x,\lambda x)\vrt
x\in\Ker_\lambda T^c\om\Ker_\lambda T_0 \} )^{-1}$ is the operator
part of the $\gamma$-field, and with
$x^0_\lambda=\delta_\Gamma(\lambda)h$, where the operator
$\delta_\Gamma(\lambda)=P(\Gamma_1\vrt\lambda I\cap T_0)^{-1}$
($\Ker_\lambda T=\{0\}$)
corresponds to ($h\in$) $\Ind M_\Gamma(\lambda)=
\msG\om\Gamma_0(\bar{\lambda}I)=
\Gamma_1(\lambda I\cap T_0)$---then
writing the relation
$\gamma_\Gamma(\bar{\lambda}_0)^c\gamma_\Gamma(\lambda)$
explicitly one finds that
$U$ is a
$(J,J^\prime)$-isometric operator,
which therefore extends to
$U\in\St_1(\msH,\msH^\prime)$.

By Lemma~\ref{lem:ibp} a)
$\gamma_\Gamma(\lambda)-
\gamma_\Gamma(\mu)=
(\lambda-\mu)
(T_0-\lambda I)^{-1}
\gamma_\Gamma(\mu)$
on $\Gamma_0(\lambda I)\cap\Gamma_0(\mu I)$,
for $\lambda\in\msU$, $\mu\in\msU\cup\msU^*$.
Thus
$U(T_0-\lambda I)^{-1}
=(T^\prime_0-\lambda I^\prime)^{-1}U$ on
a dense $\Lin\{\Ker_\mu T^c\cap\msR_{T_0-\lambda I}\vrt
\mu\in\msU\cup\msU^*\}$ in $\msR_{T_0-\lambda I}$,
and
$T^\prime_0=\tU(T_0)$,
$\msD_{T^\prime_0}=U(\msD_{T_0})$.

By Lemma~\ref{lem:ibp} d)
\[
(T^\prime_1-\lambda I^\prime)^{-1}-
(T^\prime_0-\lambda I^\prime)^{-1}=
U[(T_1-\lambda I)^{-1}-
(T_0-\lambda I)^{-1}]
U^{-1}
\]
for $\lambda\in\msU$.
From here follows $T^\prime_1\supseteq\tU(T_1)$, so
$T^\prime_1=\tU(T_1)$,
$\msD_{T^\prime_1}=U(\msD_{T_1})$,
by taking the adjoint and using
$\tU\in\St_1(\msH_\Gamma,\msH^\prime_\Gamma)$.
As a result $T^\prime=\tU(T)$
on
$\msD_{T^\prime}=U(\msD_{T})$.

With Lemmas~\ref{lem:iso}, \ref{lem:W} a)
$\Gamma^\prime=\Gamma V^{-1}$,
$V=\tU W\in\St_1(\msH_\Gamma,\msH^\prime_\Gamma)$,
$W=(W_{ab})^2_{a,b=1}\in\St_1(\msH_\Gamma)$.

Consider
the operator
$K\sum_jx_j=\sum_j(W_{11}+\lambda_jW_{12})x_j$
on $\msL=\Lin\{x_j\in\Ker_{\lambda_j}T^c\}$.
By Lemma~\ref{lem:W} b)
$\Ker_{\lambda_j} T^c=
(W_{11}+\lambda_jW_{12})(\Ker_{\lambda_j}T^c)$;
hence $\msD_K=\msL=\msR_K$.
Because
\[
(W_{11}+\lambda_{j^\prime}W_{12})^c
(W_{11}+\lambda_jW_{12})x_j=
\frac{\lambda_j-\bar{\lambda}_{j^\prime}}{\lambda_j}x_j+
\frac{\bar{\lambda}_{j^\prime}}{\lambda_j}
(W_{11}+\lambda_{j^\prime}W_{12})^c
(W_{11}+\lambda_jW_{12})x_j
\]
for $\lambda_j\neq0$,
the isometric $K$
extends to $K\in\St_1(\msH)$.

On the other hand, if
$x\in \Ker(K-W_{11}-\lambda W_{12})$,
$\lambda\in r(T,T_0)$,
then
$(\exists (x_j)\subseteq\msL)$ $x-\sum^n_{j=1}x_j\lto0$
as $n\lto\infty$ strongly in $\msH$, so
$\sum_j(\lambda-\lambda_j)x_j\in\Ker W_{12}$, \ie
$\msH=\Ker W_{12}$. But then $W_{12}=0$ implies
$\Ker(K-W_{11})=\msH$,
\ie $W_{11}=K$,
and moreover $W_{22}=K$ and $W_{21}=0$.
As a result
$W=\tK$
and $\Pi_{\Gamma^\prime}$ is
$UK$-unitarily equivalent to $\Pi_\Gamma$.
\end{proof}
\section{Similarity of minimal boundary triples for
dual pairs}\label{sec:bt}
Let
$\Pi=(\msG,\GB)$ be a bt for a dp $(A,B)$ in $\msH$.
Let (see \cite{Hassi20} for block relations)
\[
r(A,B,A_0)=r(\begin{pmatrix}A & 0 \\
0 & B\end{pmatrix},
\begin{pmatrix}A_0 & 0 \\
0 & B_0\end{pmatrix} )\,,\quad B_0=A^c_0\,.
\]
\begin{defn}
A bt $\Pi\equiv\{\msU,\Pi\}$
for a dp $(A,B)$ in $\msH$ is minimal if
$\{\msU^\#,M_{\GB}\}$,
$\{\msU^\#,M_{\GA}\}$ are holomorphic
elements for both $\msU^\#=\msU$ and $\msU^\#=\msU^*$,
for an open subset
$\msU\subseteq r(A,B,A_0)$,
and
$\CLin\{\Ker_\lambda A^c
\vrt \lambda\in \msU\cup\msU^*\}=\msH=
\CLin\{\Ker_\lambda B^c
\vrt \lambda\in \msU\cup\msU^*\}$.
\end{defn}
Let $(A^\prime,B^\prime)$ be a dp
of closed relations in a Krein space $\msH^\prime$
and let
$\Pi^\prime=(\msG,\GBp)$ be
a bt for $(A^\prime,B^\prime)$ with Weyl family
$M_{\GBp}$.
\begin{defn}
$\Pi^\prime$ is ($U$-)similar to $\Pi$
if there is a homeomorphism
$U\co\msH\lto\msH^\prime$ such that
$\GBp=\GB\tU^{-1}$.
If $U\in\St_1(\msH,\msH^\prime)$, $\Pi^\prime$
is ($U$-)unitarily equivalent to
$\Pi$.
\end{defn}
\begin{thm}\label{thm:bt}
Let $\{\msU,\Pi\}$ and $\{\msU^\prime,\Pi^\prime\}$
be minimal bt's for dp's
$(A,B)$ in $\msH$ and
$(A^\prime,B^\prime)$ in $\msH^\prime$ respectively.
If
$\{\msU,M_{\GB}\}\leftrightarrow\{\msU^\prime,M_{\GBp}\}$ and
$\{\msU^*,M_{\GB}\}\leftrightarrow\{\msU^{\prime *},M_{\GBp}\}$
then
$\Pi^\prime$ is similar to $\Pi$;
if $A=B$ and $A^\prime=B^\prime$, the similarity
is unitary iff
\begin{equation}
(\GB)^{-1}\GBp=(\GA)^{-1}\GAp\,.
\label{eq:unit}
\end{equation}
\end{thm}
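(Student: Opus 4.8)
The strategy is to reduce Theorem~\ref{thm:bt} to the already-proved Theorem~\ref{thm:obt} by passing to the block-diagonal picture. Given a bt $\Pi=(\msG,\GB)$ for the dp $(A,B)$ in $\msH$, form the symmetric relation $T=\bigl(\begin{smallmatrix}A & 0 \\ 0 & B\end{smallmatrix}\bigr)$ in the Krein space $\msH\op\msH$ (with the fundamental symmetry flipping the two copies, so that $T^c=\bigl(\begin{smallmatrix}B^c & 0 \\ 0 & A^c\end{smallmatrix}\bigr)$), and assemble from $\GB$ and $\GA=\GB_\#$ an \emph{ordinary} boundary triple $\Pi_\Gamma=(\msG,\Gamma)$ for $T^c$ with $\Gamma\bigl(\begin{smallmatrix}x \\ y\end{smallmatrix}\bigr)$ built out of $\GB(x)$ and $\GA(y)$; here the Green identity $\GA=\GB_\#$, $\GB=\GA_\#$ is exactly what makes $\Gamma$ a unitary surjective operator $\msH_\Gamma\lto\msG_\Gamma$, and surjectivity of $\GB$, $\GA$ gives $\msD_\Gamma=T^c$. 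Under this correspondence the Weyl family $M_\Gamma$ decomposes block-diagonally into $M_{\GB}$ and (the adjoint of) $M_{\GA}$, so the continuation hypotheses $\{\msU,M_{\GB}\}\leftrightarrow\{\msU^\prime,M_{\GBp}\}$ and $\{\msU^*,M_{\GB}\}\leftrightarrow\{\msU^{\prime*},M_{\GBp}\}$ translate into $\{\msU,M_\Gamma\}\leftrightarrow\{\msU^\prime,M_{\Gamma^\prime}\}$ (using that $M_{\GA}$ is determined by $M_{\GB}$ through $\GA=\GB_\#$, \cf Proposition~\ref{prop:M}). The minimality condition for $\Pi$ — namely $\CLin$ of the defect spaces $\Ker_\lambda A^c$ equals $\msH$ and likewise for $\Ker_\lambda B^c$ — is precisely minimality of $\Pi_\Gamma$ for $T^c$, since $\Ker_\lambda T^c=\Ker_\lambda B^c\op\Ker_\lambda A^c$ and $\msU\subseteq r(A,B,A_0)=r(T,T_0)$. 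Doing the same for $\Pi^\prime$ and applying Theorem~\ref{thm:obt}, we obtain $U\in\St_1(\msH\op\msH,\msH^\prime\op\msH^\prime)$ with $\Gamma^\prime=\Gamma\tU^{-1}$.

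\textbf{Extracting the similarity.} The next step is to descend from the block unitary $U$ on $\msH\op\msH$ to a homeomorphism $\msH\lto\msH^\prime$ intertwining $\GB$ and $\GBp$. Because $U$ conjugates $T_0=\bigl(\begin{smallmatrix}A_0 & 0 \\ 0 & B_0\end{smallmatrix}\bigr)$ to $T^\prime_0$ and respects the block structure of the defect subspaces used to build it (as is visible from the explicit formula for $U$ on $\msL$ in the proof of Theorem~\ref{thm:obt}), $U$ is itself block-diagonal, $U=\bigl(\begin{smallmatrix}U_B & 0 \\ 0 & U_A\end{smallmatrix}\bigr)$ with $U_B\co\msH\lto\msH^\prime$ a homeomorphism carrying $B^c$ to $B^{\prime c}$ and $U_A$ carrying $A^c$ to $A^{\prime c}$. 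Feeding this back into $\Gamma^\prime=\Gamma\tU^{-1}$ and reading off the first block gives $\GBp=\GB\,\widetilde{U_B}^{-1}$, \ie $\Pi^\prime$ is $U_B$-similar to $\Pi$. (One should check the off-diagonal blocks of $U$ vanish using that $\msH\op\{0\}$ and $\{0\}\op\msH$ are the reducing subspaces singled out by the two halves of the minimality hypothesis; alternatively, the relation $T^\prime=\tU(T)$ together with $T=A\op B$ forces this once $A^c$ and $B^c$ are distinguished by the corresponding defect families, and the degenerate case $A^c=B^c$, i.e. $A=B$, is exactly the one treated separately below.)

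\textbf{The case $A=B$ and the unitarity criterion.} When $A=B$ and $A^\prime=B^\prime$, the similarity $U=U_B$ is unitary iff $U\in\St_1(\msH,\msH^\prime)$, which, since $\GBp=\GB\tU^{-1}$ already holds, amounts to asking that the homeomorphism $\widetilde{U}=(\GB)^{-1}\GBp\co\msH_\Gamma\lto\msH^\prime_\Gamma$ (well-defined as a relation-level identity because both $\GB$ and $\GBp$ are surjective operators with the same kernel structure dictated by the dp) be a \emph{unitary} relation $\msH_\Gamma\lto\msH^\prime_\Gamma$ of the standard block form $\bigl(\begin{smallmatrix}U & 0 \\ 0 & U\end{smallmatrix}\bigr)$. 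The forward direction is immediate: if $U$ is unitary then $\tU\in\St_1$ and $(\GB)^{-1}\GBp=\tU^{-1}$ is unitary; moreover applying the analogous identity to the transposed triple, where $\GA$ plays the role of $\GB$, and using that $\GA$, $\GAp$ are determined by $\GB$, $\GBp$ via $\#$ together with the \emph{same} $U$, yields $(\GA)^{-1}\GAp=\tU^{-1}=(\GB)^{-1}\GBp$, which is~\eqref{eq:unit}. Conversely, assume~\eqref{eq:unit}. The relation $V:=(\GB)^{-1}\GBp$ is, by Lemma~\ref{lem:iso} applied to the obt picture, of the form $V\in\St_1(\msH_\Gamma,\msH^\prime_\Gamma)$ (it identifies $T^c$ with $T^{\prime c}$), but a priori $V$ need not respect the Krein-space isomorphism $\msH_\Gamma\cong\msH\times\msH$ — it need not be of diagonal form $\tW$. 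The content of~\eqref{eq:unit} is that $V$ coincides with $(\GA)^{-1}\GAp$, and since $\GA=\GB_\#$ intertwines the \emph{first} and \emph{second} coordinates (it is built from $Y\GB_\#$ with $Y$ off-diagonal), the equality of the two representations of $V$ forces $V$ to commute with the coordinate flip, hence to be block-diagonal with equal blocks, $V=\tU$ for a single homeomorphism $U\co\msH\lto\msH^\prime$; then $\tU\in\St_1(\msH_\Gamma,\msH^\prime_\Gamma)$ gives $U\in\St_1(\msH,\msH^\prime)$ and the similarity is unitary.

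\textbf{Main obstacle.} The delicate point is the last converse implication: showing that condition~\eqref{eq:unit} is strong enough to upgrade an a priori merely \emph{standard-unitary on the doubled space} relation $V=(\GB)^{-1}\GBp$ to one that descends to a single homeomorphism of $\msH$ onto $\msH^\prime$. This requires pinning down exactly how $\GA$ encodes the flip — specifically, that $(\GB)^{-1}\GBp=(\GA)^{-1}\GAp$ together with the relations $(\GB)^c=(Y\GB_\#)^{-1}$, $\GA=\GB_\#$ (and the primed analogues) forces $V$ to intertwine the two fundamental-symmetry blocks — and then invoking the rigidity of $\St_1$ to conclude that a block-symmetric standard unitary on $\msH\times\msH$ is necessarily of the form $\bigl(\begin{smallmatrix}U & 0 \\ 0 & U\end{smallmatrix}\bigr)$. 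All the holomorphy, continuation, and defect-span bookkeeping is routine once the block-diagonal dictionary of the first paragraph is set up cleanly.
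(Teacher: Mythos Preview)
Your overall strategy---reduce to Theorem~\ref{thm:obt} via the block-diagonal symmetric relation $T=\bigl(\begin{smallmatrix}A & 0 \\ 0 & B\end{smallmatrix}\bigr)$ and the associated obt $\Pi_\Gamma$---is exactly the paper's, and the translation of minimality and of the continuation hypotheses is correct (minor slip: $M_\Gamma(\lambda)=\bigl(\begin{smallmatrix}0 & M_{\GA}(\lambda) \\ M_{\GB}(\lambda) & 0\end{smallmatrix}\bigr)$ is off-diagonal, not block-diagonal).

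Two places need repair. For the block-diagonality of $U=(U_{ij})\in\St_1(\msH_{\hGm},\msH^\prime_{\hGm})$, the paper does \emph{not} treat $A=B$ separately. Writing out $\Gamma^\prime=\Gamma\tU^{-1}$ componentwise yields $\tU_{21}(B^c)\subseteq B^\prime$ and $\tU_{12}(A^c)\subseteq A^\prime$; since a minimal obt forces $T^\prime$ to have no eigenvalues, $\Ker_\lambda B^\prime=\{0\}$ for all $\lambda$, so $U_{21}(\Ker_\lambda B^c)=\{0\}$, and minimality of $\Pi$ gives $U_{21}=0$, likewise $U_{12}=0$. This works uniformly, including when $A=B$; your suggestion that the ``degenerate'' case must be handled separately for this step is mistaken. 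Once $U$ is block-diagonal the $\St_1$-relations force $U_{22}=(U^c_{11})^{-1}$, and $\GBp=\GB\tU_{11}^{-1}$, $\GAp=\GA\tU_{22}^{-1}$ follow.

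The genuine gap is your converse argument for the unitarity criterion. You treat $V=(\GB)^{-1}\GBp$ as an element of $\St_1$ and appeal to a ``coordinate flip''. Neither works: $V$ is not an operator but the multivalued relation $(\tU_{11}\vrt A^c)^{-1}\hsum(\{0\}\times A)$ (since $\Ker\GB=A$), and there is no flip on $\msH_\Gamma$ exchanging $\GB$ and $\GA$---they have different codomains $\msG$ and $\msG^\prime=\msG_1\op\msG_0$. Invoking Lemma~\ref{lem:iso} ``in the obt picture'' would land you in the doubly-doubled space $\msK$, not in $\msH_\Gamma$. The paper's route is direct computation: from the displayed formulas for $(\GB)^{-1}\GBp$ and $(\GA)^{-1}\GAp$ (the latter with $\tU_{22}$ in place of $\tU_{11}$), condition~\eqref{eq:unit} reads $(\tU_{22}-\tU_{11})(A^c)\subseteq\tU_{22}(A)$, whence $(U_{22}-U_{11})(\Ker_\lambda A^c)\subseteq U_{22}(\Ker_\lambda A)=\{0\}$, and minimality gives $U_{22}=U_{11}$, \ie $U_{11}\in\St_1(\msH,\msH^\prime)$. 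Necessity is read off from the same formulas with $U_{11}=U_{22}$.
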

For $A=B$ and $A^\prime=B^\prime$,
\eqref{eq:unit} is equivalent to
$\GAp(\GBp)^{-1}=\GA(\GB)^{-1}$
by routine computation; less trivial is that
\eqref{eq:unit} holds iff
\begin{equation}
(\GB)^{-1}\GBp\co\msH^\prime_\Gamma\lto\msH_\Gamma
\;\text{is a unitary relation.}
\label{eq:unitp}
\end{equation}
(Take the adjoint and
$\GBp_\#=\GAp$, $\GB_\#=\GA$.)
Let $E=\GA(\GB)^{-1}(=[\GB(\GB)^cY]^{-1})$,
a homeomorphism
$\msG\lto\msG^\prime$.
From
$\GA=E\GB$ follows $M_{\GA}(\lambda)=E(M_{\GB}(\lambda))$,
$\lambda\in\bbC$. Because
$M_{\GA}(\lambda)=M_{\GB}(\bar{\lambda})^*$
for $\lambda\in r_0(A)(=r^*_0(A))$,
by \eqref{eq:unit}
$\{\msU,M_{\GB}\}\leftrightarrow\{\msU^\prime,M_{\GBp}\}$
iff
$\{\msU^*,M_{\GB}\}\leftrightarrow\{\msU^{\prime *},M_{\GBp}\}$.
Hence the corollary.
\begin{cor}\label{cor:bt}
Let $\{\msU,\Pi\}$ and $\{\msU^\prime,\Pi^\prime\}$
be minimal bt's for dp's
$(A,A)$ in $\msH$ and
$(A^\prime,A^\prime)$ in $\msH^\prime$ respectively,
such that \eqref{eq:unit} holds.
If
$\{\msU,M_{\GB}\}\leftrightarrow\{\msU^\prime,M_{\GBp}\}$
then
$\Pi^\prime$ is unitarily equivalent to $\Pi$.
\end{cor}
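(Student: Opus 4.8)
The plan is to deduce the theorem from Theorem~\ref{thm:obt} by passing to the block‑diagonal symmetric relation attached to a dp. Equip $\msK=\msH\op\msH$ with the indefinite inner product $[(x_1,x_2),(y_1,y_2)]_\msK=[x_1,y_2]+[x_2,y_1]$; this is a Krein space in which $T=\bigl(\begin{smallmatrix}A & 0\\ 0 & B\end{smallmatrix}\bigr)$ is closed symmetric with equal nonzero defect numbers and $T^c=\bigl(\begin{smallmatrix}B^c & 0\\ 0 & A^c\end{smallmatrix}\bigr)$ --- indeed $A\subseteq B^c$ is precisely $T\subseteq T^c$. From the ubp data $(\GB,\GA)$ underlying $\Pi=(\msG,\GB)$ one builds an obt $\Pi_\Gamma$ for $T^c$ whose self‑adjoint restriction is $T_0=A_0\op B_0$ ($A_0=\Ker\GB_0$, $B_0=A^c_0$), whose boundary data on $\Ker_\lambda T^c=\Ker_\lambda B^c\op\Ker_\lambda A^c$ are block‑diagonal, and whose Weyl family $M_\Gamma$ is a fixed $2\times2$ block combination of $M_{\GB}$ and $M_{\GA}$ (the Green identity for $(\GB,\GA)$ is exactly the abstract Green identity for $\Gamma$ in $\msK$). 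One checks that $\{\msU,\Pi\}$ is a minimal bt for $(A,B)$ iff $\{\msU,\Pi_\Gamma\}$ is a minimal obt for $T^c$ --- the $\CLin$‑ and holomorphy‑conditions are the componentwise statements and $r(A,B,A_0)=r(T,T_0)$ --- and one does the same for $\Pi^\prime$, producing $T^\prime=\bigl(\begin{smallmatrix}A^\prime & 0\\ 0 & B^\prime\end{smallmatrix}\bigr)$ in $\msK^\prime$ and a minimal obt $\Pi_{\Gamma^\prime}$.

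Next I would match the hypotheses. On $\msU\subseteq r(A,B,A_0)$ one has $M_{\GA}(\lambda)=M_{\GB}(\bar\lambda)^*$ by Proposition~\ref{prop:M}~b) together with $\GA=\GB_\#$ (the relevant ranges are closed since $\msU\subseteq r_0(T)$), and likewise on $\msU^\prime$. Hence $\{\msU,M_{\GB}\}\leftrightarrow\{\msU^\prime,M_{\GBp}\}$ and $\{\msU^*,M_{\GB}\}\leftrightarrow\{\msU^{\prime*},M_{\GBp}\}$ together amount to $M_{\GB}=M_{\GBp}$ and $M_{\GA}=M_{\GAp}$ on the components of $\msU\cap\msU^\prime$, \ie to $\{\msU,M_\Gamma\}\leftrightarrow\{\msU^\prime,M_{\Gamma^\prime}\}$. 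Theorem~\ref{thm:obt} then gives $\Omega\in\St_1(\msK,\msK^\prime)$ with $\Gamma^\prime=\Gamma\widetilde{\Omega}^{-1}$, $\widetilde{\Omega}=\bigl(\begin{smallmatrix}\Omega & 0\\ 0 & \Omega\end{smallmatrix}\bigr)$. The decisive observation is that the unitary furnished by the proof of Theorem~\ref{thm:obt} is assembled from the $\gamma$‑field and $\delta$‑operator of $\Pi_\Gamma$ evaluated on $\Ker_\lambda T^c=\Ker_\lambda B^c\op\Ker_\lambda A^c$, which here are block‑diagonal; so $\Omega=\bigl(\begin{smallmatrix}U_1 & 0\\ 0 & U_2\end{smallmatrix}\bigr)$ with homeomorphisms $U_1,U_2\co\msH\lto\msH^\prime$ defined on $\CLin\{\Ker_\lambda B^c\}=\msH$ resp. $\CLin\{\Ker_\lambda A^c\}=\msH$. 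Because the two copies of $\msH$ in $\msK$ are neutral, unitarity of $\Omega$ forces only $U_2=(U_1^c)^{-1}$, not that $U_1$ or $U_2$ be unitary. Reading $\Gamma^\prime=\Gamma\widetilde{\Omega}^{-1}$ off through the block structure of $\Gamma$ gives $\GBp=\GB\tU_1^{-1}$ and $\GAp=\GA\tU_2^{-1}$ with $\tU_i=\bigl(\begin{smallmatrix}U_i & 0\\ 0 & U_i\end{smallmatrix}\bigr)$; the first relation says $\Pi^\prime$ is $U_1$‑similar to $\Pi$, which is the first assertion.

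For the unitarity criterion when $A=B$, $A^\prime=B^\prime$: if $\GBp=\GB\tU^{-1}$ with $U\in\St_1(\msH,\msH^\prime)$, then $\tU$ is unitary, so $\GAp=(\GB\tU^{-1})_\#=\GB_\#\tU^{-1}=\GA\tU^{-1}$; since $\GA=E\GB$ for the homeomorphism $E=\GA(\GB)^{-1}$ one has $\Ker\GA=\Ker\GB$, whence $(\GB)^{-1}\GB=(\GA)^{-1}\GA$, and composing on the right with $\tU^{-1}$ yields \eqref{eq:unit}. Conversely, assume \eqref{eq:unit}. From the previous paragraph $\GBp=\GB\tU_1^{-1}$, $\GAp=\GA\tU_2^{-1}$, $U_2=(U_1^c)^{-1}$; using $\GAp=E\GBp$ and $\GA=E\GB$ with the \emph{same} $E$ (equivalently $\GAp(\GBp)^{-1}=\GA(\GB)^{-1}$, which by routine computation is \eqref{eq:unit}) one gets $\GBp=\GB\tU_2^{-1}$ as well, so $W:=U_1^{-1}U_2$ satisfies $\GB\tW=\GB$, \ie $(I-\tW)(A^c)\subseteq A$; such $W$ form a group $\mathcal{G}_0$, and $W\in\mathcal{G}_0$ forces the Krein‑self‑adjoint homeomorphism $S:=U_1^cU_1=W^{-1}$ into $\mathcal{G}_0$. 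Choosing a Krein‑self‑adjoint square root $R$ of $S^{-1}$ inside $\mathcal{G}_0$, the operator $U_0:=U_1R$ is unitary and still satisfies $\GBp=\GB\tilde U_0^{-1}$ (because $\GB\tilde R=\GB$), so $\Pi^\prime$ is $U_0$‑unitarily equivalent to $\Pi$. Equivalently, by \eqref{eq:unit}~$\Leftrightarrow$~\eqref{eq:unitp} the relation $(\GB)^{-1}\GBp$ is unitary $\msH^\prime_\Gamma\lto\msH_\Gamma$, and $U_0$ is read off as the operator part of this unitary relation, extended across the deficiency subspaces of $A^\prime$ and $A$ via $A^\prime=\tU_1(A)$.

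The step I expect to be the main obstacle is this last implication. One must show that \eqref{eq:unit} --- equivalently the unitarity of $(\GB)^{-1}\GBp$ --- constrains $S=U_1^cU_1$ (it is the identity modulo a map $A^c\to A$) tightly enough that a Krein‑self‑adjoint square root of $S^{-1}$ exists within $\mathcal{G}_0$, or, in the alternative formulation, that the operator part of $(\GB)^{-1}\GBp$ genuinely extends to a unitary operator of $\msH^\prime$ onto $\msH$ compatible with $A^\prime=\tU_1(A)$. The remaining ingredients --- the construction and uniqueness‑up‑to‑unitary‑equivalence of the block obt, the transfer of minimality, the identity $(\GB\tU^{-1})_\#=\GB_\#\tU^{-1}$ for unitary $\tU$, and $\GA=E\GB$ --- are routine.
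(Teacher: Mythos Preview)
Your proposal conflates the Corollary with Theorem~\ref{thm:bt} and misses the Corollary's actual content. The Corollary assumes only \emph{one} continuation hypothesis, $\{\msU,M_{\GB}\}\leftrightarrow\{\msU^\prime,M_{\GBp}\}$, whereas Theorem~\ref{thm:bt} needs two. The point of the Corollary is that \eqref{eq:unit} supplies the second one for free: \eqref{eq:unit} is equivalent to $\GAp(\GBp)^{-1}=\GA(\GB)^{-1}=:E$, so $M_{\GB}(\bar\lambda)^*=M_{\GA}(\lambda)=E(M_{\GB}(\lambda))$ and $M_{\GBp}(\bar\lambda)^*=M_{\GAp}(\lambda)=E(M_{\GBp}(\lambda))$ with the \emph{same} homeomorphism $E$ on $r_0(A)$. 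Hence $M_{\GB}=M_{\GBp}$ on components of $\msU\cap\msU^\prime$ forces $M_{\GB}=M_{\GBp}$ on $(\msU\cap\msU^\prime)^*$, \ie $\{\msU^*,M_{\GB}\}\leftrightarrow\{\msU^{\prime*},M_{\GBp}\}$. Theorem~\ref{thm:bt} then applies verbatim and the Corollary is proved in two lines. You never perform this reduction; instead you write ``$\{\msU,M_{\GB}\}\leftrightarrow\{\msU^\prime,M_{\GBp}\}$ and $\{\msU^*,M_{\GB}\}\leftrightarrow\{\msU^{\prime*},M_{\GBp}\}$ together amount to\ldots'', silently importing the extra hypothesis, and then in effect reprove Theorem~\ref{thm:bt} from scratch.

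Even within that re-derivation, your sufficiency argument for unitarity is over-engineered and left incomplete. You correctly arrive at $(I-\tW)(A^c)\subseteq A$ with $W=U_1^{-1}U_2$; now simply evaluate on defect spaces: $(I-W)(\Ker_\lambda A^c)\subseteq\Ker_\lambda A=\{0\}$ for $\lambda\in\msU\cup\msU^*$, since minimality of $\Pi$ makes $\Pi_\Gamma$ minimal and hence $T$---and therefore $A$---an operator without eigenvalues. Density then gives $W=I$, so $U_1=U_2=(U_1^c)^{-1}$ is unitary. This is exactly the paper's route (in the proof of Theorem~\ref{thm:bt}). Your attempt to extract a Krein-self-adjoint square root of $S^{-1}$ inside the group $\mathcal G_0$ is neither needed nor justified: in a Krein space there is no general functional calculus producing such a root, and nothing you have established forces it to lie in $\mathcal G_0$. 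Your ``alternative formulation'' via the operator part of the unitary relation $(\GB)^{-1}\GBp$ has the same defect---you have not shown that this operator part extends to an element of $\St_1(\msH^\prime,\msH)$.
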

\begin{rem}
In the corollary $\{\msU^\#,M_{\GB}\}$
is a holomorphic element iff (via $E$) such is
$\{\msU^\#,M_{\GA}\}$; \ie if
$\{\msU,M_{\GB}\}$, $\{\msU^*,M_{\GA}\}$
are holomorphic elements
(\eg if $\msU\subseteq\rho(A_0)$,
see Lemma~\ref{lem:ibp}), then so are
$\{\msU^*,M_{\GB}\}$, $\{\msU,M_{\GA}\}$,
and conversely.
\end{rem}
\begin{proof}[Proof of Theorem~\ref{thm:bt}]
Given a dp $(A,B)$ in a $J$-space $\msH$,
the block diagonal relation
$T=\bigl(\begin{smallmatrix}A & 0 \\
0 & B \end{smallmatrix}\bigr)$ is
closed symmetric both
in the Krein space
$(\msH_\Gamma,[\,\,\cdot\,\,,\,\,\cdot\,\,]_\Gamma)$
and in the Krein space
$(\msH_{\hGm}=\msH\op\msH,[\,\,\cdot\,\,,\,\,\cdot\,\,]_{\hGm})$
with canonical symmetry
$J_{\hGm}\binom{x}{y}=\binom{Jy}{Jx}$
and indefinite metric
$\bigl[\binom{x_1}{y_1}\,,\,\binom{x_2}{y_2}\bigr]_{\hGm}=
[x_1\,,\,y_2]+[y_1\,,\,x_2]$.

Consider
{\small
\[
\Gamma=
\left\{
\left(
\left(\begin{pmatrix}
x \\ y\end{pmatrix},
\begin{pmatrix}
x^\prime \\ y^\prime\end{pmatrix} \right),
\left(\begin{pmatrix}
l_0 \\ h_1\end{pmatrix},
\begin{pmatrix}
h_0 \\ l_1\end{pmatrix} \right)
\right)\Bigl\vert
\left(\begin{pmatrix}x \\ x^\prime\end{pmatrix},
\begin{pmatrix}l_0 \\ l_1\end{pmatrix} \right)
\in\GB\,;\,
\left(\begin{pmatrix}y \\ y^\prime\end{pmatrix},
\begin{pmatrix}h_1 \\ h_0\end{pmatrix} \right)
\in\GA \right\}
\]
}
as a relation from the Krein space
$\msK=
(\msH_{\hGm}\op\msH_{\hGm},[\,\,\cdot\,\,,\,\,\cdot\,\,]_\msK)$
with canonical symmetry
$\bigl(\begin{smallmatrix}0 & -\img J_{\hGm}
\\ \img J_{\hGm} & 0 \end{smallmatrix}\bigr)$ and
indefinite metric
\[
\left[\left(\begin{pmatrix}x \\ y\end{pmatrix},
\begin{pmatrix}x^\prime \\ y^\prime\end{pmatrix}
\right),
\left(\begin{pmatrix}x_* \\ y_*\end{pmatrix},
\begin{pmatrix}x^\prime_* \\ y^\prime_*\end{pmatrix}
\right)\right]_\msK
=
\img\left[\begin{pmatrix}x \\ y\end{pmatrix},
\begin{pmatrix}x^\prime_* \\ y^\prime_*\end{pmatrix}
\right]_{\hGm}-
\img\left[\begin{pmatrix}x^\prime \\ y^\prime\end{pmatrix},
\begin{pmatrix}x_* \\ y_*\end{pmatrix}\right]_{\hGm}
\]
to the Krein space $\msG_{\Gamma}$.
(Just like $\msG_\Gamma$,
$\msK$ is defined according to the same rule
\cite[Section~2.1, Eq.~(1.4)]{Azizov89}.)

It is a simple computational task to verify
that in general
(\ie for arbitrary $\GB$, $\GA$)
$(\Gamma^c)^{-1}$ is given the same as $\Gamma$, but with
$\GA_\#$, $\GB_\#$ in place of $\GB$, $\GA$, respectively;
\[
\begin{split}
&\msD_{\Gamma}=\begin{pmatrix}\msD_{\GB} & 0 \\
0 & \msD_{\GA}\end{pmatrix}\,,\quad
\Ker\Gamma=\begin{pmatrix}\Ker\GB & 0 \\
0 & \Ker\GA\end{pmatrix}\,,
\\
&\msR_{\Gamma}=\begin{pmatrix}0 & \msR_{\GA} \\
\msR_{\GB} & 0 \end{pmatrix}\,,\quad
\Ind\Gamma=\begin{pmatrix}0 & \Ind\GA \\
\Ind\GB & 0 \end{pmatrix}\,.
\end{split}
\]

In case of bt's therefore
$\Gamma\co\msK\lto\msG_\Gamma$ is unitary
surjective and
$\Pi_\Gamma=(\msG,\Gamma)$
is an obt for $T^c=\bigl(\begin{smallmatrix}B^c & 0 \\
0 & A^c \end{smallmatrix}\bigr)$.
The Weyl family
$M_\Gamma$ associated with $\Pi_\Gamma$ reads
\[
M_\Gamma(\lambda)=
\begin{pmatrix}0 & M_{\GA}(\lambda) \\
M_{\GB}(\lambda) & 0
\end{pmatrix}\,,\quad\lambda\in\bbC
\]
and moreover
$M_{\GA}(\lambda)=M_{\GB}(\bar{\lambda})^*$
if $\lambda\in r_0(A)\cap r_0(B)$.

In parallel one constructs an obt
$\Pi_{\Gamma^\prime}=(\msG,\Gamma^\prime)$
for $T^{\prime c}=\bigl(\begin{smallmatrix}
B^{\prime c} & 0 \\
0 & A^{\prime c} \end{smallmatrix}\bigr)$
with Weyl family $M_{\Gamma^\prime}$.

Because $\{\msU,\Pi_\Gamma\}$,
$\{\msU^\prime,\Pi^\prime_\Gamma\}$
are minimal obt's for
$T^c$, $T^{\prime c}$ respectively,
where open subsets
$\msU\subseteq r(T,T_0)$,
$\msU^\prime\subseteq r(T^\prime,T^\prime_0)$,
and self-adjoint relations
$T_0=\bigl(\begin{smallmatrix}A_0 & 0 \\
0 & B_0 \end{smallmatrix}\bigr)=\Ker\Gamma_0$,
$T^\prime_0=\bigl(\begin{smallmatrix}A^\prime_0 & 0 \\
0 & B^\prime_0 \end{smallmatrix}\bigr)=\Ker\Gamma^\prime_0$,
and because
$\{\msU,M_\Gamma\}\leftrightarrow
\{\msU^\prime,M_{\Gamma^\prime}\}$,
from Theorem~\ref{thm:obt} follows that
$\Pi_{\Gamma^\prime}$ is $U$-unitarily equivalent
to $\Pi_\Gamma$.

Write $U\in\St_1(\msH_{\hGm},\msH^\prime_{\hGm})$
in matrix form $(U_{ij})^2_{i,j=1}$:
\[
\begin{split}
U^c_{11}U_{22}+U^c_{21}U_{12}=I\,,\quad
U_{11}U^c_{22}+U_{12}U^c_{21}=I^\prime\,,
\\
U^c_{11}U_{21}+U^c_{21}U_{11}=0\,,\quad
U_{11}U^c_{12}+U_{12}U^c_{11}=0\,,
\\
U^c_{12}U_{22}+U^c_{22}U_{12}=0\,,\quad
U_{21}U^c_{22}+U_{22}U^c_{21}=0\,.
\end{split}
\]
Then $\Gamma^\prime=\Gamma\tU^{-1}$ yields:
$(\hx,\hl)\in\GB$,
$(\hy,\hh)\in\GA$ iff
$(\tU_{11}\hx+\tU_{12}\hy,\hl)\in\GBp$,
$(\tU_{21}\hx+\tU_{22}\hy,\hh)\in\GAp$.
Thus
$\GB\subseteq\GBp\tU_{11}$,
$\tU_{21}(B^c)\subseteq B^\prime$,
and
$\GA\subseteq\GAp\tU_{22}$,
$\tU_{12}(A^c)\subseteq A^\prime$.

From $\tU_{21}(B^c)\subseteq B^\prime$ follows
$U_{21}(\Ker_\lambda B^c)
\subseteq\Ker_\lambda B^\prime$
($\lambda\in\bbC$); \ie
$U_{21}=0$ by minimality of $\Pi$, $\Pi^\prime$.
Similarly
$U_{12}=0$, so
$U_{11}$ and $U_{22}=(U^c_{11})^{-1}$ are homeomorphisms
$\msH\lto\msH^\prime$.
Then
$\GB_\#\subseteq\GBp_\#\tU_{22}$ implies
$\GB\supseteq\GBp\tU_{11}$, and then
$\GB=\GBp\tU_{11}$
(equivalently $\GA=\GAp\tU_{22}$), and
this proves that $\Pi^\prime$ is $U_{11}$-similar to $\Pi$.

Assume $A=B$, $A^\prime=B^\prime$;
$U_{11}$ is a homeomorphism as above.
From
$\GBp=\GB\tU^{-1}_{11}$ and
$\GAp=\GA\tU^{-1}_{22}$ (with
$U_{22}=(U^c_{11})^{-1}$) follows
\begin{align*}
(\GB)^{-1}\GBp=&(\tU_{11}\vrt A^c)^{-1}\hsum
(\{0\}\times A)\,,
\\
(\GA)^{-1}\GAp=&(\tU_{22}\vrt A^c)^{-1}\hsum
(\{0\}\times A)
\end{align*}
which proves necessity of \eqref{eq:unit}
in order that $U_{11}=U_{22}$ should hold.
Conversely, if \eqref{eq:unit} then by the above
$(\tU_{22}-\tU_{11})(A^c)\subseteq\tU_{22}(A)$;
hence $(U_{22}-U_{11})(\Ker_\lambda A^c)\subseteq
U_{22}(\Ker_\lambda A)$ ($\lambda\in\bbC$),
and by minimality $U_{22}=U_{11}$.
\end{proof}
\begin{rem}\label{rem:bt}
By correspondence
$\Pi\leftrightarrow\Pi_\Gamma$
established in the proof,
every dp has a bt:
$\dim\Ker_\img J_{\hGm}T^c=\dim\Ker_{-\img} J_{\hGm}T^c$,
because $\Ker_{\pm\img}J_{\hGm}T^c=\pm\img
((JA^c)^{-1}\cap-JB^c )$.
By a simple
computation, moreover, all boundary
value spaces defined in
\cite{Derkach17} have their analogues for a dp;
particularly the pair $(\msG,\Gamma)$ is an ibp
for $T^c$ iff $((\msG,\GB),(\msG^\prime,\GA))$
is an ibp for a dp $(A,B)$.
With
$\GB_1=P_1\GB$
($P_1$ as in Lemma~\ref{lem:ibp}),
$\GA_1=P^\prime_1\GA$
($P^\prime_1\co\msG^\prime\lto\msG_0$,
$\binom{h_1}{h_0}\mapsto h_0$),
$\msD_{\Gamma_i}=\msD_\Gamma$
($i=0,1$), and
\[
T_i=\Ker\Gamma_i=\begin{pmatrix}A_i & 0 \\
0 & B_i\end{pmatrix}\,,\quad
A_i=\Ker\GB_i\,,\quad
B_i=\Ker\GA_i\,,
\]
\[
\msR_{\Gamma_0}=\msR_{\GB_0}\times\msR_{\GA_0}\,,\quad
\msR_{\Gamma_1}=\msR_{\GA_1}\times\msR_{\GB_1}\,,
\]
\[
\Ind\Gamma_0=\Ind\GB_0\times\Ind\GA_0\,,\quad
\Ind\Gamma_1=\Ind\GA_1\times\Ind\GB_1
\]
the following equivalences hold:
\\
$-$ $(\msG,\Gamma)$ is an $AB$-generalized boundary pair (bp)
for $T^c$
(\ie ibp with
$T_0$ self-adjoint and $\bar{\msR}_{\Gamma_0}=\msG$)
iff
$((\msG,\GB),(\msG^\prime,\GA))$ is an
$AB$-generalized bp for $(A,B)$
(\ie ibp with
$A_0=B^c_0$, $B_0=A^c_0$,
$\bar{\msR}_{\GB_0}=\msG_0$,
$\bar{\msR}_{\GA_0}=\msG_1$);
\\
$-$ $(\msG,\Gamma)$ is a $q$-bt for $T^c$
(\ie $AB$-generalized bp with
$\bar{\msR}_{\Gamma_1}=\msG$) iff
$((\msG,\GB),(\msG^\prime,\GA))$ is a
$q$-bt for $(A,B)$
(\ie $AB$-generalized bp with
$\bar{\msR}_{\GB_1}=\msG_1$,
$\bar{\msR}_{\GA_0}=\msG_0$), see \cite{Behrndt23};
\\
$-$ $(\msG,\Gamma)$ is an $ES$-generalized bp for $T^c$
(\ie ubp with $T_0$ essentially
self-adjoint)
iff
$((\msG,\GB),(\msG^\prime,\GA))$ is an
$ES$-generalized bp for $(A,B)$
(\ie ubp with
$\bar{A}_0=B^c_0$);
\\
$-$ $(\msG,\Gamma)$ is an $S$-generalized bp for $T^c$
(\ie $ES$-generalized bp with $T_0$ self-adjoint) iff
$((\msG,\GB),(\msG^\prime,\GA))$ is an
$S$-generalized bp for $(A,B)$
(\ie $ES$-generalized bp with
$A_0=B^c_0$ and $B_0=A^c_0$);
\\
$-$ $(\msG,\Gamma)$ is a $B$-generalized bp
for $T^c$
(\ie $S$-generalized bp with $\msR_{\Gamma_0}=\msG$)
iff
$((\msG,\GB),(\msG^\prime,\GA))$ is a
$B$-generalized bp for $(A,B)$
(\ie $S$-generalized bp with
$\msR_{\GB_0}=\msG_0$, $\msR_{\GA_0}=\msG_1$).
\end{rem}
In conclusion, the method used in the
proof of Theorem~\ref{thm:bt} has an application in
various boundary value problems, if one is due to
transfer results from obt's to bt's for dp's.
In particular generalized resolvents
generated by minimal extensions
are worth to mention, see
\cite[Proposition~3.5]{Hassi05}.
In what follows, however, we concentrate on examples
of \eqref{eq:unit}.
\begin{exam}
By \cite[Theorem~6.2]{Arlinskii05}
a bounded everywhere defined contraction $T$ in a Hilbert
space $\msH$ is a qsc-operator if
$\msD_{T\cap T^*}$ is nontrivial; it is
a qsc-extension of a closed symmetric contraction
$A=T\vrt\msH\om\msN$ in $\msH$, if $\msN=\bar{\msN}\supseteq
\msR_{T^*-T}$.
Associated with
a qsc-operator $T$ is the
$Q$-function
$Q_T(\lambda)=P_\msN(T-\lambda I)^{-1}\vrt\msN$,
$\lambda\in\msU=\{\lambda\vrt\abs{\lambda}>1\}$
($P_\msN$ is an orthogonal projection in $\msH$ onto $\msN$),
which is the Weyl function $M_{\GB}(\lambda)$ of
a bt $\Pi$ for a dp
$(A,A)$, with $\msG_0=\msG_1=\msN$,
$\GB\binom{x}{y}=\binom{P_\msN(Tx-y)}{P_\msN x}$,
$\binom{x}{y}\in A^*=T\hsum(\{0\}\times\msN)$, so that
$\GA=\GB+\binom{(T^*-T)P}{0}$;
$T$ is $\msN$-minimal if
$\CLin\{\Ker_\lambda A^*=(T-\lambda I)^{-1}(\msN)\vrt\,
\lambda\in\msU \}=\msH$. To see that
$Q_T$ on $\msU$ ($\subseteq r(A,A,A_0=T)$)
determines an $\msN$-minimal qsc-operator $T$
uniquely up to unitary equivalence, by
\eqref{eq:unit} one only needs
to observe that
$(T^*-T)x=(T^{\prime*}-T^\prime)x^\prime$
if
$\bigl(\binom{x^\prime}{y^\prime},
\binom{x}{y} \bigr)\in(\GB)^{-1}\GBp$, where
$T^\prime$ is an $\msN$-minimal qsc-operator
in a Hilbert space $\msH^\prime\supseteq\msN$,
with $\{\msU,Q_T\}\leftrightarrow
\{\msU,Q_{T^\prime}\}$ and a bt $\Pi^\prime$
defined similarly.
\end{exam}
\section{Example: $D$-boundary triple
in a $\Pi_\kappa$-space}\label{sec:Dbt}
\begin{defn}
Let $A$ be a closed symmetric relation in
a Krein space $\msH$ and let the Hilbert spaces
$\msG_1\subseteq\msG_0$ and $\msG_2=\msG_0\om\msG_1$.
A $D$-bt for $A^c$ is a bt
$\Pi=(\msG,\GB)$ for $(A,A)$ such that
$\GA=E\GB$, where
$E(l_0,l_1)=(E_1l_0,\img E_2l_0+l_1)$,
$\msD_E=\msG$, where $E_1$ ($E_2$) is an
orthogonal projection in $\msG_0$ onto
$\msG_1$ ($\msG_2$).
A $D$-bt is minimal if such is a bt $\Pi$.
\end{defn}
In a $D$-bt
$A_0$ ($B_1$) is a maximal $J$-symmetric relation,
\cf \cite[Proposition~3.11]{Mogilevski06},
with zero negative defect number;
to avoid trivialities $A$ is therefore
non-maximal $J$-symmetric whenever a $D$-bt
is considered.

Corollary~\ref{cor:bt} for minimal $D$-bt's generalizes
Theorem~\ref{thm:obt}, since $E$ is injective.
We apply this corollary to symmetric operators in
a $\Pi_\kappa$-space, a separable Pontryagin space with
$\kappa$ negative squares (and indefinite metric
$[\,\,\cdot\,\,,\,\,\cdot\,\,]$ and induced
norm $\norm{\,\,\cdot\,\,}$).

Below, $A$ is always a closed $\pi$-symmetric operator.
Following \cite{Jursenas24,Jursenas23}
$A\in(P)$ if the lineal
$\msD_A+\msR_A$ is dense in $\Pi_\kappa$;
$A\in(P)$ iff
$\Ker_{\lambda}A^c\cap\Ker_{\mu}A^c=\{0\}$
for some and then all
$\lambda,\mu\in\sigma_p(A^c)$,
$\lambda\neq\mu$.
\begin{defn}(\cite{Jursenas24})
$A\in(LP)$
if $A\in(P)$ has the (Langer)
property $(L)$:
There is a canonical decomposition
$\Pi_\kappa=\Pi^-[\op]\Pi^+$ in which the negative
subspace $\Pi^-\subseteq\msD_A$.
\end{defn}
A maximal $\pi$-dissipative operator is
of class $(LP)$. From \cite[Theorem~1.3]{Jursenas24}
(where $\Gamma_A$ is defined precisely) follows
\begin{prop}\label{prop:dsptT}
For a $\pi$-dissipative extension $A^\sim$
of $A\in(L)$
\[
\bbC^-\cap\Gamma_A\subseteq r(A^\sim)\quad
(\text{equivalently  }
\bbC^-\cap\sigma_p(A^\sim)\subseteq C_A)
\]
for some open subset
$\Gamma_A=\bbC_*\setm C_A\supseteq
\{\lambda\vrt\abs{\im\lambda}>t_0
\norm{AP^-} \}$,
$t_0\approx1.84$, where
$P^-$ is the canonical projection onto
$\Pi^-\subseteq\msD_A$;
the subset $\Gamma_A$ is symmetric:
$\Gamma_A=-\Gamma_A=\Gamma_{-A}=\Gamma^*_A$.
Moreover $A^\sim$ is an operator if $A\in(P)$.
\end{prop}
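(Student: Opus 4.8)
\medskip\noindent\emph{Proof strategy.}
The quantitative core of the statement is \cite[Theorem~1.3]{Jursenas24}: there the open set $\Gamma_A=\bbC_*\setm C_A$ is defined intrinsically from $A$ through the defect lineals $\Ker_\lambda JA^c$ ($\lambda\in\bbC_*$) lying outside $\msA^-$ (\cf the discussion of $\Ker_\lambda JT^c$ at the start of Section~\ref{sec:mobt}), the lower bound $\Gamma_A\supseteq\{\lambda\vrt\abs{\im\lambda}>t_0\norm{AP^-}\}$ with $t_0\approx1.84$ is established, and $\bbC^-\cap\Gamma_A\subseteq r(A^\sim)$ is proved for every $\pi$-dissipative extension $A^\sim$ of $A\in(L)$. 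So the plan is to quote that theorem for the main inclusion and add three short supplements: the equivalent formulation via $\sigma_p(A^\sim)$, the symmetries of $\Gamma_A$, and single-valuedness of $A^\sim$ under $(P)$.

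\medskip\noindent
For a self-contained derivation of the localization the engine is the dissipativity estimate carried over to the canonical decomposition $\Pi_\kappa=\Pi^-[\op]\Pi^+$ with $\Pi^-\subseteq\msD_A$. For $(x,y)\in A^\sim$ write $x=x^-+x^+$; then $x^\pm\in\msD_{A^\sim}$ (as $\Pi^-\subseteq\msD_A\subseteq\msD_{A^\sim}$), and for $\lambda\in\bbC^-$
\[
\abs{\im\lambda}\bigl(\norm{x^+}^2-\norm{x^-}^2\bigr)\le\abs{\im[y-\lambda x,x]}\le\norm{y-\lambda x}\,\norm{x}
\]
by $\pi$-dissipativity and $[x,x]=\norm{x^+}^2-\norm{x^-}^2$. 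In block form relative to $\Pi^-[\op]\Pi^+$ the blocks of $A^\sim$ meeting $\Pi^-$ have norm $\le\norm{AP^-}$ (two coincide with blocks of $A$; the third coincides with a block of $A^c$, which by $\pi$-symmetry has the norm of a block of $A$), while the $\Pi^+$-block is dissipative in the Hilbert space $\Pi^+$, hence bounded below by $\abs{\im\lambda}$ on $\bbC^-$; a Schur-complement estimate then gives $\norm{y-\lambda x}\ge c\,\norm{x}$ once $\abs{\im\lambda}>t_0\norm{AP^-}$, with $t_0$ the extremal constant of that bound, and \cite[Theorem~1.3]{Jursenas24} extends this to all of $\bbC^-\cap\Gamma_A$ via the structure of class $(L)$ (carried by $\Ker_\lambda JA^c\notin\msA^-$). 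For the equivalence: $\msD_{A^\sim}\cap\Pi^+$ has codimension $\kappa$ in $\msD_{A^\sim}$, and the estimate gives $\norm{y-\lambda x}\ge\abs{\im\lambda}\norm{x}$ there for $\lambda\in\bbC^-$, so — by the standard Pontryagin-space argument — $\msR_{A^\sim-\lambda I}$ is closed for all such $\lambda$; hence on $\bbC^-$ one has $\lambda\in r(A^\sim)$ iff $A^\sim-\lambda I$ is injective, \ie $\bbC^-\cap\Gamma_A\subseteq r(A^\sim)\Leftrightarrow\bbC^-\cap\sigma_p(A^\sim)\subseteq C_A$.

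\medskip\noindent
For the symmetries, $-A$ is $\pi$-symmetric of class $(L)$ with the same $\Pi^-$ and $\norm{(-A)P^-}=\norm{AP^-}$, and $(-A)^c=-A^c$ gives $\Ker_\lambda J(-A)^c=\Ker_{-\lambda}JA^c$, so $\Gamma_{-A}=-\Gamma_A$; the $\pi$-symmetry of $A$ ties $\Ker_{\bar\lambda}JA^c$ to $\Ker_\lambda JA^c$, whence the defining condition of $\Gamma_A$ is conjugation-invariant, $\Gamma_A=\Gamma^*_A$; the two together give $\Gamma_A=-\Gamma_A$. Finally, $A^\sim$ is single-valued when $A\in(P)$: a $\pi$-dissipative extension of a $\pi$-symmetric operator lies in its adjoint, so $\Ind A^\sim\subseteq\Ind A^c=\Ker_\infty A^c$, while $\pi$-dissipativity forces $\Ind A^\sim\perp\msD_{A^\sim}$; comparing this multivalued part with the defect lineals $\Ker_\mu A^c$ ($\mu\in\bbC_*$, nonzero since $A$ is non-maximal) through the $(P)$-characterization of Section~\ref{sec:Dbt}, with $\infty$ admitted among the eigenvalues of $A^c$, forces $\Ind A^\sim=\{0\}$.

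\medskip\noindent
The step I expect to be the genuine obstacle is the one already settled by \cite[Theorem~1.3]{Jursenas24}: pushing the localization from the crude half-plane neighbourhood of $\infty$, $\{\abs{\im\lambda}>t_0\norm{AP^-}\}$, up to the full open set $\Gamma_A$, which needs the delicate analysis of class-$(L)$ operators through their defect lineals. Granting that theorem, the three supplements are routine bookkeeping, the single-valuedness under $(P)$ being the least immediate of them.
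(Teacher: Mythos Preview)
Your approach matches the paper's: the proposition is stated there as an immediate consequence of \cite[Theorem~1.3]{Jursenas24} (the paper writes ``From \cite[Theorem~1.3]{Jursenas24} (where $\Gamma_A$ is defined precisely) follows'' and gives no further argument), so your plan to quote that theorem for the main inclusion and bound, and to treat the equivalence, the symmetries, and single-valuedness under $(P)$ as short supplements, is exactly right. Your sketches for the supplements actually go beyond what the paper supplies; the only place to tighten is the symmetry step, where deriving $\Gamma_A=-\Gamma_A$ from $\Gamma_{-A}=-\Gamma_A$ and $\Gamma_A=\Gamma^*_A$ alone is not quite enough---you need one more identity (e.g.\ $\Gamma_{-A}=\Gamma^*_A$, which also comes from the defect-lineal description) to close the chain.
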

\begin{rem}
$\bbC^-\cap\{\lambda\vrt\abs{\im\lambda}>
2\norm{A^\sim P^-} \}\subseteq\rho(A^\sim)$
for a maximal $\pi$-dissipative
operator $A^\sim$ (\cite[Theorem~2.2.9]{Azizov89}).
\end{rem}
In a $\Pi_0$-space the Weyl function of a $D$-bt
is defined
in \cite{Mogilevski11,Mogilevski09,Mogilevski06}
on $\bbC^+=\rho(A_0)$,
in order to have it holomorphic.
For $A\in(LP)$
in a $\Pi_\kappa$-space $\rho(A_0)$ is characterized
thus.
\begin{prop}\label{prop:dsptT2}
Let $A\in(LP)$, $\Pi^-\subseteq\msD_A$,
and let $A^\sim\supseteq A$
be a closed maximal $\pi$-symmetric operator
with defect numbers $(d_+,0)$, $d_+\geq1$.
Then
\[
\bbR\cap\rho(A^\sim)=\bbR\cap r(A^\sim)\,,\quad
\bbC^+\cap\rho(A^\sim)\supseteq
\bbC^+\cap\Gamma_A\,,\quad
\bbC^-\cap\rho(A^\sim)\subseteq\bbC^-\cap C_A
\]
and $\rho(A^\sim)\cap\rho^*(A^\sim)=\emptyset$.
\end{prop}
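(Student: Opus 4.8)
The plan is to apply Proposition~\ref{prop:dsptT} to $A^\sim$ and to $-A^\sim$, and then to extract the behaviour of $\msR_{A^\sim-\lambda I}$ from the defect numbers $(d_+,0)$. Since $A^\sim$ is $\pi$-symmetric, $[A^\sim x\,,\,x]\in\bbR$ for $x\in\msD_{A^\sim}$, so both $A^\sim$ and $-A^\sim$ are $\pi$-dissipative; moreover $-A^\sim\supseteq-A$ and $-A\in(LP)$, because $\msD_{-A}=\msD_A\supseteq\Pi^-$ and $\msD_{-A}+\msR_{-A}=\msD_A+\msR_A$. Applying Proposition~\ref{prop:dsptT} to the $\pi$-dissipative extension $A^\sim$ of $A\in(L)$ gives $\bbC^-\cap\Gamma_A\subseteq r(A^\sim)$; applying it to the $\pi$-dissipative extension $-A^\sim$ of $-A\in(L)$ gives, via $\Gamma_{-A}=\Gamma_A=-\Gamma_A$ and $r(-A^\sim)=-r(A^\sim)$, that $\bbC^+\cap\Gamma_A\subseteq r(A^\sim)$. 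Hence $\Gamma_A\subseteq r(A^\sim)$.

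Next I would use that $(\msR_{A^\sim-\lambda I})^{[\perp]}=\Ker_{\bar\lambda}(A^\sim)^c$, whose dimension, as $\mu$ ranges over $\bbC^-$ (resp.\ over $\bbC^+$), is the defect number $0$ (resp.\ $d_+\geq1$). For $\lambda\in\bbC^+\cap r(A^\sim)$ the closed subspace $\msR_{A^\sim-\lambda I}$ then has trivial $[\,\cdot\,,\,\cdot\,]$-orthogonal complement in $\Pi_\kappa$, hence equals $\Pi_\kappa$, so $\lambda\in\rho(A^\sim)$; combined with $\bbC^+\cap\Gamma_A\subseteq r(A^\sim)$ this yields $\bbC^+\cap\rho(A^\sim)\supseteq\bbC^+\cap\Gamma_A$. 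For $\lambda\in\bbC^-$, instead, $(\msR_{A^\sim-\lambda I})^{[\perp]}=\Ker_{\bar\lambda}(A^\sim)^c\neq\{0\}$, so $\msR_{A^\sim-\lambda I}$ is a proper subspace and $\lambda\notin\rho(A^\sim)$; thus $\bbC^-\cap\rho(A^\sim)=\emptyset$, which a fortiori is contained in $\bbC^-\cap C_A$.

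It remains to treat the real axis and $\rho^*(A^\sim)$. Since $\rho(A^\sim)$ is open, a point $\lambda\in\rho(A^\sim)$ lying on $\bbR$, or with $\bar\lambda\in\rho(A^\sim)$, would place an open disk about $\lambda$ (respectively about $\bar\lambda$) inside $\rho(A^\sim)$ and so meet $\bbC^-\cap\rho(A^\sim)$, contradicting the emptiness just obtained; hence $\bbR\cap\rho(A^\sim)=\emptyset$ and $\rho(A^\sim)\cap\rho^*(A^\sim)=\emptyset$. For the first identity one shows $\bbR\cap r(A^\sim)=\emptyset$: a putative $\lambda_0\in\bbR\cap r(A^\sim)$ would lie in an open disk $D\subseteq r(A^\sim)$, on the connected set $D$ the map $\lambda\mapsto\mathrm{codim}\,\msR_{A^\sim-\lambda I}$ is locally constant, hence constant, but it vanishes on $D\cap\bbC^+$ (by $\bbC^+\cap r(A^\sim)\subseteq\rho(A^\sim)$ from the previous paragraph) and is at least $1$ on $D\cap\bbC^-$ — absurd. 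Thus $\bbR\cap r(A^\sim)=\emptyset=\bbR\cap\rho(A^\sim)$. I expect the real difficulty to concentrate in this last step: one must verify, in the $\Pi_\kappa$-setting, the local constancy of $\mathrm{codim}\,\msR_{A^\sim-\lambda I}$ on the regular-type set $r(A^\sim)$ and the constancy of the defect numbers on each open half-plane $\bbC^\pm$, and one must control the at most $\kappa$-dimensional isotropic part of $\msR_{A^\sim-\lambda I}$ when passing between $(\msR_{A^\sim-\lambda I})^{[\perp]}$ and a definite orthogonal complement; it is precisely here that the hypothesis $A\in(LP)$ (through Proposition~\ref{prop:dsptT}) and the strict inequality $d_+\geq1$ are essential.
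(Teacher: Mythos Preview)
Your setup via Proposition~\ref{prop:dsptT} applied to $A^\sim$ and $-A^\sim$, yielding $\Gamma_A\subseteq r(A^\sim)$, matches the paper. After that the arguments diverge, and two of your steps carry genuine gaps that the paper sidesteps.

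For the identity $\bbR\cap\rho(A^\sim)=\bbR\cap r(A^\sim)$ you try to prove the stronger statement $\bbR\cap r(A^\sim)=\emptyset$ via stability of $\mathrm{codim}\,\msR_{A^\sim-\lambda I}$ across a disk meeting both half-planes. This is unnecessary and, as you yourself note, the required local constancy and half-plane constancy of defect indices are not immediate in $\Pi_\kappa$. The paper instead uses the one-line fact that a $\pi$-symmetric operator has no real residual spectrum, $\bbR\cap\sigma_r(A^\sim)=\emptyset$ (\cite[Corollary~2.2.17]{Azizov89}); since $r(A^\sim)\setminus\rho(A^\sim)\subseteq\sigma_r(A^\sim)$, the identity follows without ever deciding whether the set is empty.

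For the last two claims you assert that $\Ker_{\bar\lambda}(A^\sim)^c$ has dimension $0$ for every $\lambda\in\bbC^+$ and dimension $\geq1$ for every $\lambda\in\bbC^-$. In a $\Pi_\kappa$-space this global constancy of defect dimensions over the full half-planes is exactly what cannot be taken for granted: $A^\sim$ (and $(A^\sim)^c$) may have finitely many non-real eigenvalues, so the Hilbert-space argument does not transfer pointwise. The paper avoids this entirely: working only on the connected set $\bbC^-\cap\Gamma_A\subseteq r(A^\sim)$, it argues by contradiction that if $\msR_{A^\sim-\lambda I}=\Pi_\kappa$ for one such $\lambda$ then for all, whence $\Gamma_A\subseteq\rho(A^\sim)\cap\rho^*(A^\sim)$, and then the self-adjointness criterion (as in \cite[Theorem~1.5.5(iv)]{Behrndt20}) forces $A^\sim=(A^\sim)^c$, contradicting $d_+\geq1$. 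This simultaneously yields $\bbC^-\cap\rho(A^\sim)\subseteq\bbC^-\cap C_A$ and $\rho(A^\sim)\cap\rho^*(A^\sim)=\emptyset$, using $d_+\geq1$ only once and never invoking pointwise defect dimensions off $\Gamma_A$.
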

\begin{proof}
The first two claims follow from
$\bbR\cap\sigma_r(A^\sim)=\emptyset$
(\cite[Corollary~2.2.17]{Azizov89}) and
Proposition~\ref{prop:dsptT}.
For the last two claims
it suffices to verify
that $\msR_{A^\sim-\lambda I}\neq\Pi_\kappa$
for all $\lambda\in\bbC^-\cap\Gamma_A$, as
the latter would imply
$\bbC^-\cap\sigma_r(A^\sim)\supseteq\bbC^-\cap\Gamma_A$.

If $\msR_{A^\sim-\lambda I}=\Pi_\kappa$
for some and then for all $\lambda\in\bbC^-\cap\Gamma_A$,
then $\bbC^-\cap\Gamma_A\subseteq\bbC^-\cap\rho(A^\sim)$
so $\Gamma_A\subseteq\rho(A^\sim)\cap\rho^*(A^\sim)$;
as in \cite[Theorem~1.5.5(iv)]{Behrndt20}
the latter implies $A^\sim=(A^\sim)^c$, a contradiction
to $d_+\neq0$.
\end{proof}
\begin{rem}
If $\kappa=0$ then
$C_A=\emptyset$, so
$\bbR\cap r(A^\sim)=\emptyset$
(\eg \cite[Proposition~3.3]{Schmudgen12}),
$\rho(A^\sim)=\bbC^+$.
\end{rem}
\begin{defn}
$A\in(LP)$ is simple if
$\CLin\{\Ker_\lambda A^c\vrt
\lambda\in\Gamma_A \}=\Pi_\kappa$;
$\Pi^-\subseteq\msD_A$.
\end{defn}
\begin{prop}\label{prop:CLLP}(\cf \cite{Azizov03})
$A\in(LP)$ is simple iff
$\CLin\{\Ker_\lambda A^c\vrt
\lambda\in\bbC_* \}=\Pi_\kappa$.
\end{prop}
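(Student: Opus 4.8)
The statement to prove is Proposition~\ref{prop:CLLP}: for $A\in(LP)$ with $\Pi^-\subseteq\msD_A$, simplicity (\ie $\CLin\{\Ker_\lambda A^c\vrt\lambda\in\Gamma_A\}=\Pi_\kappa$) is equivalent to $\CLin\{\Ker_\lambda A^c\vrt\lambda\in\bbC_*\}=\Pi_\kappa$. Since $\Gamma_A\subseteq\bbC_*$, the implication ``$\Rightarrow$'' is trivial, so the whole content is ``$\Leftarrow$'': enlarging the parameter set from $\Gamma_A$ to all of $\bbC_*$ cannot enlarge the closed linear span. The natural route is to show that for every $\mu\in\bbC_*\setm\Gamma_A=\bbC_*\cap C_A$ one has $\Ker_\mu A^c\subseteq\msN:=\CLin\{\Ker_\lambda A^c\vrt\lambda\in\Gamma_A\}$; then the reverse span inequality, and hence equality, is immediate.

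\textbf{Key steps.}
First I would fix any $\lambda_0\in\Gamma_A$ (nonempty, since $\Gamma_A\supseteq\{\lambda\vrt\abs{\im\lambda}>t_0\norm{AP^-}\}$ by Proposition~\ref{prop:dsptT}) and record the standard resolvent-type identity linking kernels at different points: for a fixed $\lambda_0$-standard maximal $\pi$-dissipative (or $\pi$-symmetric) extension $A^\sim\supseteq A$ whose resolvent set contains a neighbourhood of $\lambda_0$, the operator $I+(\mu-\lambda_0)(A^\sim-\mu I)^{-1}$ maps $\Ker_{\lambda_0}A^c$ onto $\Ker_\mu A^c$ for $\mu\in\rho(A^\sim)$; this is the Krein-space analogue of the map noted in the second Remark of Section~\ref{sec:mobt}. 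Using $A\in(L)$, Proposition~\ref{prop:dsptT} gives $\bbC^-\cap\Gamma_A\subseteq\rho(A^\sim)$ for a $\pi$-dissipative $A^\sim$, and Proposition~\ref{prop:dsptT2} (applied to $-A$, whose class is again $(LP)$ with $\Gamma_{-A}=\Gamma_A$) gives the corresponding statement on $\bbC^+$. The upshot is that the vector-valued map $\mu\mapsto x_\mu\in\Ker_\mu A^c$ (with $x_\mu=(I+(\mu-\lambda_0)(A^\sim-\mu I)^{-1})x_{\lambda_0}$) is holomorphic on each connected component of $\Gamma_A$ that meets the relevant half-plane, and $x_\mu\in\msN$ there by definition. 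Next I would extend this holomorphy across $C_A$: since $A\in(P)$ forces $\Ker_\lambda A^c\cap\Ker_\mu A^c=\{0\}$ for distinct spectral points, the $\gamma$-field-type map has no obstruction to analytic continuation, and the continued vector still lies in $\Ker_\mu A^c$ for $\mu\in\bbC_*$ (the defining equation $(A^c-\mu I)x_\mu\ni 0$ is preserved under analytic continuation of a holomorphic $\msH$-valued solution). By the identity theorem the continuation takes values in the closed subspace $\msN$, hence $x_\mu\in\msN$ for all $\mu\in\bbC_*$ reachable this way. Finally, because $A\in(P)$, every element of $\Ker_\mu A^c$ arises (up to scaling) as such a continued solution — more precisely $\dim\Ker_\mu A^c$ is constant on $\bbC_*$ for $A\in(P)$, \cf the characterization of $(P)$ via $\Ker_\lambda A^c\cap\Ker_\mu A^c=\{0\}$, so the image of the continuation already exhausts $\Ker_\mu A^c$. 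Therefore $\Ker_\mu A^c\subseteq\msN$ for all $\mu\in\bbC_*$, giving $\CLin\{\Ker_\lambda A^c\vrt\lambda\in\bbC_*\}\subseteq\msN$, and the reverse inclusion is trivial.

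\textbf{Main obstacle.}
The delicate point is the analytic continuation across $C_A$ together with the claim that $\dim\Ker_\mu A^c$ is constant on all of $\bbC_*$ (not merely on $\Gamma_A$), so that the single continued solution $x_\mu$ really spans the whole kernel rather than a proper subspace. Here the hypothesis $A\in(LP)$ is essential: the property $(L)$ pins down the negative part $\Pi^-\subseteq\msD_A$ so that the defect subspaces behave as in the Hilbert-space case on the nonreal spectrum, and $A\in(P)$ guarantees the triviality of pairwise intersections of defect subspaces, which is exactly what prevents the rank of the relevant operator-valued holomorphic function from dropping. I would therefore devote the bulk of the argument to making the continuation rigorous — e.g.\ by covering $\bbC_*$ by a chain of discs, on each of which a local $\lambda_0$-standard extension exists (Proposition~\ref{prop:dsptT2} and its Remark supply such extensions on both half-planes), and patching the local holomorphic kernel-valued maps using uniqueness of analytic continuation — and to invoking the $(P)$-characterization to conclude surjectivity onto $\Ker_\mu A^c$.
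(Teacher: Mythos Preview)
Your overall plan---show $\Ker_\mu A^c\subseteq\msN$ for every $\mu\in\bbC_*$---is fine, but the execution breaks down at the step you yourself flag as the main obstacle, and the proposed fix does not work.

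The problematic claims are (i) that the vector-valued map $\mu\mapsto x_\mu$ admits an analytic continuation across $C_A$, justified by the triviality of $\Ker_\lambda A^c\cap\Ker_\mu A^c$ coming from~$(P)$, and (ii) that $\dim\Ker_\mu A^c$ is constant on all of $\bbC_*$. Neither follows from~$(P)$. The intersection condition $\Ker_\lambda A^c\cap\Ker_\mu A^c=\{0\}$ says nothing about the singularities of $(A^\sim-\mu I)^{-1}$, which is what governs whether $x_\mu$ continues; and in a $\Pi_\kappa$-space the dimension of $\Ker_\mu A^c$ can genuinely jump at exceptional points in $C_A$, so even if a single branch $x_\mu$ continued you would have no reason to believe it exhausts the kernel there. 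Your ``chain of discs with local $\lambda_0$-standard extensions'' patching does not rescue this: you still need, on each disc, a resolvent that is holomorphic through the relevant part of $C_A$, and you have not produced one.

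The paper avoids both issues with two changes. First, it dualizes: rather than pushing defect vectors forward, it takes $x$ in the $[\,\cdot\,,\,\cdot\,]$-orthogonal complement of $\msN$ (equivalently $x\in\bigcap_{\lambda\in\Gamma_A}\msR_{A-\lambda I}$) and studies the \emph{scalar} holomorphic function $\lambda\mapsto[x_\lambda,x]$ on $\rho(A^\sim)$, where $x_\lambda=Vx_\mu$ for fixed $x_\mu$. Vanishing on $\Gamma_A\cap\rho(A^\sim)$ forces vanishing on all of $\rho(A^\sim)$ by the identity theorem; since $V$ is a bijection $\Ker_\mu A^c\to\Ker_\lambda A^c$, this already gives $x\perp\Ker_\lambda A^c$ for every $\lambda\in\bbC_*\cap\rho(A^\sim)$, with no dimension bookkeeping needed. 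Second---and this is the point you are missing---no continuation across $C_A$ is required at all: for a maximal $\pi$-dissipative $A^\sim$ in $\Pi_\kappa$ one has $\bbC^-\subseteq\rho(A^\sim)$ up to at most $\kappa$ normal eigenvalues, and symmetrically for a maximal $\pi$-accumulative extension on $\bbC^+$. Thus $\bbC_*\cap\rho(A^\sim)$ already swallows $\bbC^-\cap C_A$ (respectively $\bbC^+\cap C_A$) except for finitely many points, and the identity theorem does the rest inside $\rho(A^\sim)$. Property~$(P)$ enters only to ensure $A^\sim$ is an operator (Proposition~\ref{prop:dsptT}), not for any continuation argument.
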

\begin{proof}
As in \cite[Lemma~1.6.11]{Behrndt20}
consider
$V=I+(\lambda-\mu)(A^\sim-\lambda I)^{-1}$
for any maximal $\pi$-dissipative
(maximal $\pi$-accumulative) $A^\sim\supseteq A$ and
$\lambda,\mu\in\rho(A^\sim)$; in the canonical
decomposition
$\Pi_\kappa=\Pi^-[\op]\Pi^+$,
$\Pi^-\subseteq\msD_A$, by Proposition~\ref{prop:dsptT}
$\bbC^-\cap\Gamma_A\subseteq\rho(A^\sim)$
($\bbC^+\cap\Gamma_A\subseteq\rho(A^\sim)$).
$V$ maps $\Ker_\mu A^c$ bijectively
onto $\Ker_\lambda A^c$, that is
$x_\lambda=Vx_\mu\in\Ker_\lambda A^c$ if
$x_\mu\in\Ker_\mu A^c$,
and conversely.

If $x\in\bigcap_{\lambda\in\Gamma_A}
\msR_{A-\lambda I}$ then the function
$\lambda\mapsto[x_\lambda\,,\,x]$
holomorphic on $\bbC^-\cap\rho(A^\sim)$
($\bbC^+\cap\rho(A^\sim)$) vanishes on
$\bbC^-\cap\Gamma_A$ ($\bbC^+\cap\Gamma_A$),
so it
must vanish on all of $\bbC^-\cap\rho(A^\sim)$
($\bbC^+\cap\rho(A^\sim)$);
that is
$\CLin\{\Ker_\lambda A^c\vrt
\lambda\in\Gamma_A \}=
\CLin\{\Ker_\lambda A^c\vrt
\lambda\in\bbC_*\cap\rho(A^\sim) \}=\Pi_\kappa$.

As the final ingredient
$\rho(A^\sim)$ contains $\bbC^-$ ($\bbC^+$)
with a possible exception of at most $\kappa$
normal eigenvalues of $A^\sim$;
$A^\sim$ is an operator by Proposition~\ref{prop:dsptT}.
\end{proof}
\begin{cor}\label{cor:Mo}
For $R$ sufficiently large,
the Weyl function on $\{\lambda\vrt\im\lambda>R \}$
of a closed simple
$\pi$-symmetric operator of class $(LP)$
determines a $D$-bt
uniquely up to unitary equivalence.
\end{cor}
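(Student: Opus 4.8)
The plan is to deduce Corollary~\ref{cor:Mo} from Corollary~\ref{cor:bt} by verifying its two hypotheses for a $D$-bt of a closed simple $\pi$-symmetric operator $A\in(LP)$: namely that the $D$-bt is minimal, and that the unitarity condition \eqref{eq:unit} holds automatically. The second point is immediate: by the definition of a $D$-bt one has $\GA=E\GB$ with the injective operator $E(l_0,l_1)=(E_1l_0,\img E_2l_0+l_1)$, $\msD_E=\msG$; hence $(\GB)^{-1}\GBp=(\GB)^{-1}E^{-1}E\GBp=(\GA)^{-1}\GAp$ provided the $D$-bt's $\Pi$ and $\Pi^\prime$ are built with the \emph{same} operator $E$, which is the case since $E$ depends only on the fixed decomposition $\msG_1\subseteq\msG_0$, $\msG_2=\msG_0\om\msG_1$. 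So \eqref{eq:unit} is free; this is exactly the remark already made in the excerpt that ``Corollary~\ref{cor:bt} for minimal $D$-bt's generalizes Theorem~\ref{thm:obt}, since $E$ is injective.''

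Next I would establish minimality. Fix a $D$-bt $\Pi=(\msG,\GB)$ for $(A,A)$. By Proposition~\ref{prop:dsptT2} applied to the maximal $\pi$-symmetric relations $A_0$ and $B_1$ occurring in a $D$-bt (recall $A_0$, $B_1$ are maximal $J$-symmetric with zero negative defect number), $\rho(A_0)\supseteq\bbC^+\cap\Gamma_A$ and $\rho(B_0)=\rho(A_0^c)\supseteq\bbC^-\cap\Gamma_A$, and moreover $\bbC^-\cap\rho(A_0)\subseteq\bbC^-\cap C_A$, so that $\rho(A_0)$ and $\rho^*(A_0)$ are disjoint. Choosing $R$ large enough that $\{\lambda\vrt\im\lambda>R\}\subseteq\bbC^+\cap\Gamma_A$ (possible since $\Gamma_A\supseteq\{\lambda\vrt\abs{\im\lambda}>t_0\norm{AP^-}\}$), I set $\msU=\{\lambda\vrt\im\lambda>R\}$; then $\msU\subseteq\rho(A_0)$ and $\msU^*\subseteq\rho(B_0)$, so by the Remark following Corollary~\ref{cor:bt} (and Lemma~\ref{lem:ibp}, which gives holomorphy of $M_{\GB}$, $M_{\GA}$ on $\rho(A_0)$ resp.\ $\rho(B_0)$) all four pairs $\{\msU^\#,M_{\GB}\}$, $\{\msU^\#,M_{\GA}\}$ for $\msU^\#\in\{\msU,\msU^*\}$ are holomorphic elements. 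It remains to check the density condition $\CLin\{\Ker_\lambda A^c\vrt\lambda\in\msU\cup\msU^*\}=\Pi_\kappa$ (and the identical one with $B^c=A^c$): by Proposition~\ref{prop:CLLP}, simplicity of $A$ is equivalent to $\CLin\{\Ker_\lambda A^c\vrt\lambda\in\bbC_*\}=\Pi_\kappa$, and the argument of that proposition's proof---using $V=I+(\lambda-\mu)(A_0-\lambda I)^{-1}$ to move $\Ker_\mu A^c$ bijectively onto $\Ker_\lambda A^c$ for $\mu,\lambda\in\rho(A_0)$, together with the analyticity/vanishing argument for $\lambda\mapsto[x_\lambda,x]$---shows that already $\CLin\{\Ker_\lambda A^c\vrt\lambda\in\msU\cup\msU^*\}$ equals the full span over $\bbC_*\cap(\rho(A_0)\cup\rho^*(A_0))$, hence equals $\Pi_\kappa$. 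This verifies that $\{\msU,\Pi\}$ is a minimal $D$-bt.

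Finally, given two closed simple $\pi$-symmetric operators $A\in(LP)$ in $\msH$ and $A^\prime\in(LP)$ in $\msH^\prime$ with $D$-bt's $\Pi$, $\Pi^\prime$ whose Weyl functions agree on a common half-plane $\{\lambda\vrt\im\lambda>R\}$, I would take $\msU=\msU^\prime=\{\lambda\vrt\im\lambda>R\}$ (enlarging $R$ so it works for both), so that trivially $\{\msU,M_{\GB}\}\leftrightarrow\{\msU^\prime,M_{\GBp}\}$ on the single connected component $\msU$; by the two previous paragraphs the hypotheses of Corollary~\ref{cor:bt} are met, and it yields that $\Pi^\prime$ is unitarily equivalent to $\Pi$. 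The main obstacle I anticipate is the bookkeeping for minimality: one must make sure the relevant spectral sets for $A_0$ and $B_0$ (as they sit inside the $D$-bt) really do contain a common half-plane and its reflection, which is where Proposition~\ref{prop:dsptT2} and the disjointness $\rho(A_0)\cap\rho^*(A_0)=\emptyset$ are used, and one must confirm that the density condition in the definition of a minimal bt---phrased over $\msU\cup\msU^*$ rather than over all of $\Gamma_A$---is still implied by simplicity via the resolvent-transport argument of Proposition~\ref{prop:CLLP}. Everything else is a direct application of the already-proved corollary together with the injectivity of $E$.
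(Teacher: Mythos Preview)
Your overall strategy matches the paper's: verify \eqref{eq:unit} via injectivity of $E$, establish minimality, and invoke Corollary~\ref{cor:bt}. However, you placed $\msU$ in the wrong half-plane. The definition of a minimal bt requires $\msU\subseteq r(A,A,A_0)=r(T,T_0)$, which in particular demands $\msU\subseteq r(T_0)=r(A_0)\cap r(B_0)$. You only check $\msU\subseteq\rho(A_0)$ and $\msU^*\subseteq\rho(B_0)$; you never verify $\msU\subseteq r(B_0)$, and in fact it fails. For $\lambda\in\bbC^+\cap\Gamma_A$ one has $\bar\lambda\in\bbC^-\cap\Gamma_A\subseteq r(A_0)\setm\rho(A_0)$ by Propositions~\ref{prop:dsptT} and~\ref{prop:dsptT2}, so $\msR_{A_0-\bar\lambda I}$ is a closed \emph{proper} subspace; hence $\Ker_\lambda B_0=\Ker_\lambda A_0^c\neq\{0\}$, \ie $\lambda\in\sigma_p(B_0)$. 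Thus $\bbC^+\cap\Gamma_A$ is disjoint from $r(B_0)$, and your choice $\msU=\{\lambda\vrt\im\lambda>R\}$ does not lie in $r(A,A,A_0)$.

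The paper instead takes $\msU=\bbC^-\cap\Gamma_A$. Since $A_0$ is maximal $\pi$-symmetric and $B_0=A_0^c$ is maximal $\pi$-dissipative, both are $\pi$-dissipative extensions of $A$, and Proposition~\ref{prop:dsptT} gives $\bbC^-\cap\Gamma_A\subseteq r(A_0)\cap r(B_0)$; together with Proposition~\ref{prop:dsptT2} one obtains $r(A,A,A_0)\cap\Gamma_A=\bbC^-\cap\Gamma_A$. The Weyl function prescribed on $\{\lambda\vrt\im\lambda>R\}\subseteq\msU^*$ determines $M_{\GB}$ on $\msU$ via $M_{\GB}(\lambda)=E^{-1}(M_{\GA}(\lambda))=E^{-1}(M_{\GB}(\bar\lambda)^*)$, so the continuation hypothesis of Corollary~\ref{cor:bt} is met on $\msU\cap\msU^\prime$; this is exactly why the Remark following Corollary~\ref{cor:Mo} says one could equally give $M_{\GB}$ on $\{\lambda\vrt\im\lambda<-R\}$. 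With $\msU$ moved to the lower half-plane your argument goes through as in the paper.
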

\begin{proof}
A $D$-bt $\{\bbC^-\cap\Gamma_A,\Pi\}$
$(\Pi^-\subseteq\msD_A)$ of a simple
(non-maximal) $\pi$-symmetric operator $A\in(LP)$
is minimal:
Since $A_0$ ($B_0$) is a maximal $\pi$-symmetric
(maximal $\pi$-dissipative) operator,
by Propositions~\ref{prop:dsptT}, \ref{prop:dsptT2}
$r(A,A,A_0)\cap\Gamma_A=\bbC^-\cap\Gamma_A$.
Now apply Proposition~\ref{prop:CLLP}.
\end{proof}
\begin{rem}
$\{\bbC^-\cap\Gamma_A,M_{\GB} \}$
$(\Pi^-\subseteq\msD_A)$ is
the holomorphic element; the representing
pair $\{\Phi(\lambda),\Psi(\lambda)\}$
for the closed operator
$M_{\GB}(\lambda)=E^{-1}(M_{\GA}(\lambda))$
is
$\Phi(\lambda)=I_1-\img E_2M_{\GA}(\lambda)$,
$\Psi(\lambda)=E_1M_{\GA}(\lambda)$ (equivalently
$M_{\GB}(\lambda)=E_1(M_{\GA}(\lambda)^{-1}-
\img E_2 )^{-1}$). One could equally
formulate the corollary for Weyl function
$M_{\GB}$
on $\{\lambda\vrt\im\lambda<-R\}$
($R\geq t_0\norm{AP^-}$).
\end{rem}
\begin{rem}
From the proofs of Propositions~\ref{prop:dsptT2}
and \ref{prop:CLLP} one finds that
in a $\Pi_0$-space the property $(P)$
in Corollary~\ref{cor:Mo}
can be dropped, since $\rho(B_0)=\bbC^-$;
because also $\Gamma_A=\bbC_*$,
the result is therefore
\cite[Theorem~3.14]{Mogilevskii20}.
\end{rem}
\section{Example: Boundary triple determined by the
fractional linear transformation of Nevanlinna
function}\label{sec:lft}
Consider a closed symmetric
operator $A$ in a Krein space $\msH$ with equal
nonzero defect numbers.
Let $(\msL,\mGm)$ be an obt for $A^c$ with Weyl family
$M_{\mGm}$.
Suppose the Hilbert spaces $\msL$ and $\msG_0$
are isomorphic
and let $W$ be a homeomorphism
$\msL_\Gamma\lto\msG$; $\msG_1=\msG_0$.
Then $\Pi=(\msG,\GB)$, where $\GB=W\mGm$ and
$\GA=(W^c)^{-1}\mGm$, is a bt for a dp $(A,A)$
with Weyl family
$M_{\GB}(\lambda)=W(M_{\mGm}(\lambda))$;
$W^c$ is the adjoint of $W$, the latter being
considered as the operator from the Krein space
$\msL_\Gamma$ to the $\bigl(\begin{smallmatrix}0 & -\img I_0 \\
\img I_0 & 0 \end{smallmatrix}\bigr)$-space
$\msG$ (\ie the Krein space
$\msG=(\msG_0)_\Gamma$). It is supposed that
$\Pi$ is minimal.

Let $A^\prime$ be a closed symmetric
operator in a Krein space $\msH^\prime$
with the same defect numbers as those of $A$;
$(\msL^\prime,\mGm^\prime)$ is an obt for $A^{\prime c}$
with Weyl faimly $M_{\mGm^\prime}$,
$W^\prime$ is a homeomorphism
$\msL^\prime_\Gamma\lto\msG$.
Let
$\Pi^\prime=(\msG,\GBp)$,
$\GBp=W^\prime\mGm^\prime$,
$\GAp=(W^{\prime c})^{-1}\mGm^\prime$,
be a minimal bt for a dp $(A^\prime,A^\prime)$ with
Weyl family
$M_{\GBp}(\lambda)=
W^\prime(M_{\mGm^\prime}(\lambda))$.

Under these conditions
\eqref{eq:unitp} holds
iff
a homeomorphism $V=W^{\prime\,-1}W\co
\msL_\Gamma\lto\msL^\prime_\Gamma$ belongs
to $\St_1(\msL_\Gamma,\msL^\prime_\Gamma)$.

We illustrate the necessary and sufficient
condition with
\[
W=\begin{pmatrix}
K^{-1}B & -K^{-1} \\
K^*+CK^{-1}B & -CK^{-1}
\end{pmatrix}
\]
(see \cite[Remark~3.16]{Hassi13}), where
$K\in\msB(\msG_0,\msL)$ is bijective,
$B\in\msB(\msL)$, $C=\msB(\msG_0)$.
The operator $W^\prime$ is defined analogously,
with the associated objects indicated by the
primes. Then
\begin{prop}\label{prop:V}
$V\in\St_1(\msL_\Gamma,\msL^\prime_\Gamma)$ iff
\begin{equation}
\begin{cases}
\im(C^\prime-C)=0\,,\quad
\msR_{\im B}\subseteq K(\msD_{C\cap C^\prime})\,,
\\
\im B^\prime=(K^\prime K^{-1})(\im B)
(K^\prime K^{-1})^*\,.
\end{cases}
\label{eq:sysV}
\end{equation}
\end{prop}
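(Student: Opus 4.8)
The plan is to reduce the claim to a single operator equation on $\msG$ and then read off \eqref{eq:sysV} by comparing $2\times2$ block entries. By the equivalences recorded after Theorem~\ref{thm:bt} and in the paragraph preceding the proposition, $V=W^{\prime\,-1}W$ lies in $\St_1(\msL_\Gamma,\msL^\prime_\Gamma)$ iff \eqref{eq:unitp} holds, hence (as $A=B$, $A^\prime=B^\prime$) iff \eqref{eq:unit} holds, \ie iff $\GA(\GB)^{-1}=\GAp(\GBp)^{-1}$. Since $\GB=W\mGm$ and $\GA=(W^c)^{-1}\mGm$ with $\mGm$ unitary, $\GA(\GB)^{-1}=(W^c)^{-1}W^{-1}=(WW^c)^{-1}$, so the condition becomes $WW^c=W^\prime W^{\prime c}$. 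Writing $W^c=J_{\msL_\Gamma}W^{*}J_\msG$, where $W^{*}$ is the Hilbert adjoint and $J_{\msL_\Gamma}=\bigl(\begin{smallmatrix}0 & -\img I\\ \img I & 0\end{smallmatrix}\bigr)$, $J_\msG=\bigl(\begin{smallmatrix}0 & -\img I_0\\ \img I_0 & 0\end{smallmatrix}\bigr)$ are the fundamental symmetries, and cancelling the homeomorphism $J_\msG$ on the right, the condition reads
\[
W J_{\msL_\Gamma} W^{*}=W^\prime J_{\msL^\prime_\Gamma} W^{\prime *}\,.
\]

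To compute the left-hand side I would use the factorization
\[
W=\begin{pmatrix}I_0 & 0\\ C & I_0\end{pmatrix}\begin{pmatrix}K^{-1} & 0\\ 0 & K^{*}\end{pmatrix}\begin{pmatrix}I & B\\ 0 & I\end{pmatrix}\begin{pmatrix}0 & -I\\ I & 0\end{pmatrix}=:L_C\,D_K\,U_B\,S\,,
\]
a direct multiplication of which recovers the displayed matrix for $W$. As $J_{\msL_\Gamma}=\img S$, $S^{*}=-S$ and $S^{2}=-I$ (so $S\,S\,S^{*}=S$), one has $WJ_{\msL_\Gamma}W^{*}=\img\,L_C D_K(U_B S U_B^{*})D_K^{*}L_C^{*}$, and multiplying out the triangular factors $L_C$, $D_K$, $U_B$ gives
\[
W J_{\msL_\Gamma} W^{*}=\begin{pmatrix}-2M & -2MC^{*}-\img I_0\\ -2CM+\img I_0 & -2\,(CMC^{*}-\im C)\end{pmatrix}\,,\qquad M:=K^{-1}(\im B)(K^{-1})^{*}\,,
\]
a self-adjoint operator on $\msG_0\op\msG_0$; likewise for $W^\prime$, with $M^\prime:=K^{\prime\,-1}(\im B^\prime)(K^{\prime\,-1})^{*}$.

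It then remains to match the four blocks. Equality of the $(1,1)$-entries is $M=M^\prime$; multiplying by $K^\prime$ on the left and $(K^\prime)^{*}$ on the right turns this into $\im B^\prime=(K^\prime K^{-1})(\im B)(K^\prime K^{-1})^{*}$, the second relation of \eqref{eq:sysV}. Granting $M=M^\prime$, equality of the $(1,2)$-entries is $M(C-C^\prime)^{*}=0$; since $K$ is bijective this amounts to $(C-C^\prime)K^{-1}(\im B)=0$, \ie $\msR_{\im B}\subseteq K(\msD_{C\cap C^\prime})$, and the $(2,1)$-entries give the adjoint of this. Using $M=M^\prime$, $MC^{*}=MC^{\prime *}$ and $CM=C^\prime M$ one computes $CMC^{*}=CMC^{\prime *}=C^\prime MC^{\prime *}=C^\prime M^\prime C^{\prime *}$, so equality of the $(2,2)$-entries reduces to $\im C=\im C^\prime$, \ie $\im(C^\prime-C)=0$; and conversely the three relations of \eqref{eq:sysV} force all four blocks to coincide, which gives the equivalence. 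I expect the delicate point to be precisely this last one: the $(2,2)$-comparison isolates the imaginary parts of $C,C^\prime$ only after $M=M^\prime$ and $M(C-C^\prime)^{*}=0$ have been used to cancel the quadratic term $CMC^{*}$. (If $B,C,B^\prime,C^\prime$ are permitted to be linear relations rather than bounded operators, the same factorization and block calculus apply with relation arithmetic — which is exactly what forces the formulation in terms of $\msR_{\im B}$ and $\msD_{C\cap C^\prime}$.) That bookkeeping is the main obstacle.
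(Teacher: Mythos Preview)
Your argument is correct and takes a genuinely different route from the paper's. The paper computes $V=W^{\prime\,-1}W$ explicitly as a $2\times2$ block matrix (with off-diagonal entry $V_2=K^{\prime\,*\,-1}(C^\prime-C)K^{-1}$), then writes out the conditions $V^cV=I$, $VV^c=I$ entrywise to obtain an intermediate system in terms of $V_2$, and finally substitutes to reach \eqref{eq:sysV}. You instead pass, via \eqref{eq:unit}/\eqref{eq:unitp} and $\GA(\GB)^{-1}=(WW^c)^{-1}$, to the symmetric identity $WJ_{\msL_\Gamma}W^*=W^\prime J_{\msL^\prime_\Gamma}W^{\prime\,*}$, and evaluate each side through the factorization $W=L_C D_K U_B S$. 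The advantage of your route is that it avoids inverting $W^\prime$ and never requires the explicit form of $V$: the triangular factors separate the contributions of $B$, $K$, $C$, so the three conditions of \eqref{eq:sysV} fall out of the $(1,1)$-, $(1,2)$-, and $(2,2)$-blocks in turn. (Your detour through \eqref{eq:unit} is in fact unnecessary: since $V$ is a homeomorphism, $V\in\St_1$ iff $V^c=V^{-1}$, and $W^c(W^{\prime c})^{-1}=W^{-1}W^\prime$ rearranges directly to $WW^c=W^\prime W^{\prime c}$.) The paper's approach has the minor compensating benefit of producing the explicit matrix for $V$, which is used just afterwards when discussing $\theta^\prime=V(\theta)$.
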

(Note that $C\cap C^\prime=\re C\cap\re C^\prime$
if $\im C=\im C^\prime$.)
\begin{proof}
By definition
\[
V=\begin{pmatrix}
V_1 & V_2 \\
B^\prime V_1-K^\prime K^{-1}B &
K^\prime K^{-1}+B^\prime V_2
\end{pmatrix}\,,
\]
\[
V_1=(KK^{\prime\,-1})^*-V_2B\,,
\quad
V_2=K^{\prime\,*\,-1}(C^\prime-C)K^{-1}
\]
and by routine computation
$V\in\St_1(\msL_\Gamma,\msL^\prime_\Gamma)$ iff
\[
\begin{cases}
\msR_{\im B}\subseteq\Ker V_2\quad
\text{(or equivalently $\msR_{V^*_2}\subseteq
\msD_{B\cap B^*}$)}\,,
\\
\msR_{\im B^\prime}\subseteq\Ker V^*_2\quad
\text{(or equivalently $\msR_{V_2}\subseteq
\msD_{B^\prime\cap B^{\prime *}}$)}\,,
\\
\im(V_2KK^{\prime\,-1})=0\,,\quad
\im B^\prime=(K^\prime K^{-1})(\im B)
(K^\prime K^{-1})^*\,;
\end{cases}
\]
substituting $V_2$ in the latter system gives
\eqref{eq:sysV}.
\end{proof}
If $\Pi^\prime$ is unitarily equivalent to $\Pi$,
$\mGm^{\prime\,-1}(\theta^\prime)$
$(\theta^\prime\subseteq\msL^\prime_\Gamma)$
is unitarily equivalent to
$\mGm^{-1}(\theta)$
$(\theta=V^{-1}(\theta^\prime))$;
for $V$ as in Proposition~\ref{prop:V} this means
\[
\theta^\prime=B^\prime+K^\prime
[\re(C^\prime-C)
+K^*(\theta-B)^{-1}K]^{-1}
K^{\prime *}\,.
\]
In particular
\begin{SL}
\item[a)]
$A_0=\Ker(\mGm_1-B\mGm_0)$
(resp. $B_0=\Ker(\mGm_1-B^*\mGm_0)$) and
$A^\prime_0=\Ker(\mGm^\prime_1-B^\prime\mGm^\prime_0)$
(resp. $B^\prime_0=
\Ker(\mGm^\prime_1-B^{\prime *}\mGm^\prime_0)$)
are unitarily equivalent, but
\item[b)]
$\mT_0=\Ker\mGm_0$ (\ie $\theta=\{0\}\times\msL$)
gives
$\theta^\prime=B^\prime+K^\prime[\re(C^\prime-C)]^{-1}
K^{\prime *}=\theta^{\prime*}$.
\end{SL}

Further simplification is possible in the
Hilbert space setting.
\begin{prop}
Suppose $\Pi^\prime$ is similar to $\Pi$.
If $A$ $(A^\prime)$ is considered as a closed
symmetric operator in a Hilbert space $\msH$ $(\msH^\prime)$,
then the first two conditions in the system \eqref{eq:sysV}
are sufficient in order that the third one should hold.
\end{prop}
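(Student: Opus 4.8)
The plan is to compare the \emph{imaginary parts} (equivalently, the $\gamma$-field products) of the two Weyl functions, which are intrinsic to the similarity class, and to read the third relation off the resulting identity with the help of the first two.

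Since $\Pi^\prime$ is similar to $\Pi$, the computation in the proof of Theorem~\ref{thm:bt}---which never uses self-adjointness of the parameters---gives $\GBp=\GB\tU^{-1}_{11}$ for a homeomorphism $U_{11}\co\msH\lto\msH^\prime$ intertwining $A^c$ and $A^{\prime c}$. In the Hilbert-space setting $U_{11}A^\ast U_{11}^{-1}=A^{\prime\ast}$, so $U_{11}^\ast U_{11}$ commutes with $A$ and $A^\ast$, and the polar part of $U_{11}$ is a unitary carrying $A$ onto $A^\prime$. As $\Pi$, $\Pi^\prime$ are minimal, $A$ and $A^\prime$ are simple; applying Theorem~\ref{thm:obt} to the obt's $(\msL,\mGm)$, $(\msL^\prime,\mGm^\prime)$ and absorbing the attendant unitary I may assume $\msL=\msL^\prime$ and $M_{\mGm}=M_{\mGm^\prime}=:m$, a non-constant Nevanlinna operator function with $m(\bar\lambda)=m(\lambda)^\ast$, holomorphic on a symmetric open subset of $\bbC_*$; then $M_{\GB}=M_{\GBp}$ there, equivalently the homeomorphism $V=W^{\prime\,-1}W\co\msL_\Gamma\lto\msL^\prime_\Gamma$ fixes the graph of $m(\lambda)$ for every such $\lambda$.

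By Lemma~\ref{lem:ibp}\,b),\,c) with $\lambda_0=\bar\lambda$ (the inclusion in c) being an equality for a bt in a Hilbert space) one has $(\lambda-\bar\lambda)^{-1}(M_{\GB}(\lambda)-M_{\GB}(\bar\lambda))=\gamma_{\GA}(\bar\lambda)^\ast\gamma_{\GB}(\bar\lambda)$, and the same with primes; since $M_{\GB}=M_{\GBp}$ the two left sides coincide, so
\[
\gamma_{\GA}(\bar\lambda)^\ast\gamma_{\GB}(\bar\lambda)=\gamma_{\GAp}(\bar\lambda)^\ast\gamma_{\GBp}(\bar\lambda)\qquad\text{on the common domain.}
\]
Substituting $\gamma_{\GB}(\lambda)=\gamma_{\mGm}(\lambda)(B-m(\lambda))^{-1}K$, $\gamma_{\GBp}(\lambda)=\gamma_{\mGm}(\lambda)(B^\prime-m(\lambda))^{-1}K^\prime$, the analogous expressions for $\gamma_{\GA}$, $\gamma_{\GAp}$ obtained from the transformations $(W^c)^{-1}$, $(W^{\prime c})^{-1}$, and $\gamma_{\mGm}(\bar\lambda)^\ast\gamma_{\mGm}(\bar\lambda)=(\im\lambda)^{-1}\im m(\lambda)$ (common to both sides) turns this into an operator identity, valid for all $\lambda$ in the symmetric common domain, between $(B-m(\lambda))^{-1}$, $(B^\prime-m(\lambda))^{-1}$ and constants built from $B,B^\prime,C,C^\prime,K,K^\prime$. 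Its $\lambda$-independent content is extracted by analytic continuation in $z=m(\lambda)$ and a limiting argument; feeding in the first assumed condition $\im(C^\prime-C)=0$ (which makes the $C$-contributions match) and the second, $\msR_{\im B}\subseteq K(\msD_{C\cap C^\prime})=\Ker V_2$ with $V_2=K^{\prime\,\ast\,-1}(C^\prime-C)K^{-1}$ (which annihilates the cross terms carrying $V_2$), the identity collapses to $\im B^\prime=(K^\prime K^{-1})(\im B)(K^\prime K^{-1})^\ast$, i.e.\ the third condition of \eqref{eq:sysV}. With the first two this is the whole system, equivalently $V\in\St_1(\msL_\Gamma,\msL^\prime_\Gamma)$; in particular $\Pi^\prime$ is then unitarily equivalent to $\Pi$. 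Moreover the residual inclusion $\msR_{\im B^\prime}\subseteq\Ker V_2^\ast$ from the intermediate system in the proof of Proposition~\ref{prop:V} is automatic, since $\msR_{\im B^\prime}=(K^\prime K^{-1})\msR_{\im B}\subseteq(K^\prime K^{-1})K(\msD_{C\cap C^\prime})=K^\prime(\msD_{C\cap C^\prime})=\Ker V_2^\ast$ once $C^\prime-C=(C^\prime-C)^\ast$.

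The main obstacle is the last step---isolating the $\lambda$-independent part of the $\gamma$-field identity---which is delicate precisely because the range of $m$ need not be a full neighbourhood of infinity: one must organize the analytic continuation in $z=m(\lambda)$ and the limiting behaviour carefully (in the worst case supplementing with the $\bar\lambda$-version of the identity), and keep the adjoint bookkeeping honest, since $B$ and $C$ are not self-adjoint, $M_{\GB}$ is not a Nevanlinna function, and it is the unsymmetric combination $\gamma_{\GA}(\bar\lambda)^\ast\gamma_{\GB}(\bar\lambda)$ that enters. A secondary point is the reduction in the second paragraph, where one checks that replacing $U_{11}$ by its unitary polar part leaves the Weyl functions unchanged, using that $U_{11}^\ast U_{11}$ commutes with $A$ together with Theorem~\ref{thm:obt}.
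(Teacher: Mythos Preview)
Your reduction in the second paragraph does not go through. Theorem~\ref{thm:obt} would apply to the obt's $(\msL,\mGm)$ and $(\msL^\prime,\mGm^\prime)$ only if their Weyl families were continuations of each other, but similarity of $\Pi$ and $\Pi^\prime$ gives merely $M_{\GB}=M_{\GBp}$, which translates to $M_{\mGm^\prime}(\lambda)=V(M_{\mGm}(\lambda))$ with the \emph{a priori non-unitary} $V=W^{\prime\,-1}W$---the very map whose unitarity you are trying to establish. If instead you identify via the genuine standard unitary $V_0\in\St_1(\msL_\Gamma,\msL^\prime_\Gamma)$ relating the two obt's for (after pulling back by the polar part of $U_{11}$) the same $A^*$, then $W^\prime$ must be replaced by $W^\prime V_0$; this composite need not be of the parametrized form, and in any case it changes $K^\prime,B^\prime,C^\prime$ and hence the very identity you want to prove. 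The analytic extraction you describe afterwards is, as you yourself flag, not actually carried out; ``analytic continuation in $z=m(\lambda)$'' is ill-posed for operator-valued $m$, and even in the scalar case the range of a Nevanlinna function on a half-plane need not fill an open set.

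The paper's argument avoids both difficulties by staying at the level of boundary operators. From $\GBp=\GB\tU_{11}^{-1}$ one has $\mGm^\prime=V\mGm\tU_{11}^{-1}$; since $\mGm^\prime$ is a unitary operator (obt), taking the Krein adjoint and comparing yields $VV^c\mGm^\prime=\mGm^\prime H$ with $H=\tU_{11}\tU_{11}^*$. A direct block computation---this is precisely where the first two conditions of \eqref{eq:sysV} are used---shows that $VV^c$ is lower triangular with identity diagonal and $(VV^c)_{21}=2\img\bigl[\im B^\prime-(K^\prime K^{-1})(\im B)(K^\prime K^{-1})^*\bigr]$. The first row of the intertwining relation reads $\mGm^\prime_0=\mGm^\prime_0 H$, so $I^\prime_\Gamma-H$ maps $A^{\prime *}$ into $\mT^\prime_0=\Ker\mGm^\prime_0$; because $H=H^*$ in the Hilbert space and $\mT^\prime_0$ is self-adjoint, $J^\prime_\Gamma(\mT^\prime_0)$ is also $(I^\prime_\Gamma-H)$-invariant, and the von Neumann decomposition of $A^{\prime *}$ forces $(VV^c)_{21}=0$. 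No reduction of the Weyl functions and no analytic continuation are needed.
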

\begin{proof}
By hypothesis
$\GBp=\GB\tU^{-1}_{11}$ for a homeomorphism
$U_{11}\co\msH\lto\msH^\prime$, which we are willing to
show can be chosen unitary, \ie
$\im B^\prime=(K^\prime K^{-1})(\im B)
(K^\prime K^{-1})^*$.

Since $\GB=W^\prime V\mGm$, $\GBp=W^\prime\mGm^\prime$,
so $\mGm^\prime=V\mGm\tU^{-1}_{11}$.
Since $\mGm^\prime\co
\msH^\prime_\Gamma\lto\msL^\prime_\Gamma$ is unitary,
taking the adjoint and the inverse
yields $\mGm^\prime=(V^c)^{-1}\mGm\tU^c_{11}$; hence
$VV^c\mGm^\prime=\mGm^\prime H$ or else
\[
\begin{pmatrix}
I_{\msL^\prime} & 0 \\
(VV^c)_{21} & I_{\msL^\prime}
\end{pmatrix}\begin{pmatrix}\mGm^\prime_0
\\ \mGm^\prime_1\end{pmatrix}=
\begin{pmatrix}\mGm^\prime_0H
\\ \mGm^\prime_1H\end{pmatrix}\,,\quad
H=\begin{pmatrix}
U_{11}U^*_{11} & 0 \\ 0 & U_{11}U^*_{11}\end{pmatrix}
\co\msH^\prime_\Gamma\lto\msH^\prime_\Gamma\,,
\]
\[
(VV^c)_{21}=2\img[\im B^\prime-
(K^\prime K^{-1})(\im B)(K^\prime K^{-1})^* ]\,.
\]

By $\mGm^\prime_0=\mGm^\prime_0H$ on $A^{\prime*}$,
$(I^\prime_\Gamma-H)(\mT^\prime_0)\subseteq
(I^\prime_\Gamma-H)(A^{\prime *})\subseteq\mT^\prime_0$.
Since $H^*=H$ and $\mT^{\prime*}_0=\mT^\prime_0$,
$(I^\prime_\Gamma-H)J^\prime_\Gamma(\mT^\prime_0)
\subseteq J^\prime_\Gamma(\mT^\prime_0)$;
$J^\prime_\Gamma=\bigl(\begin{smallmatrix}
0 & -\img I^\prime \\ \img I^\prime & 0
\end{smallmatrix} \bigr)$. Since
$A^{\prime *}=A^\prime\hop\mN^\prime\hop
J^\prime_\Gamma(\mN^\prime)$,
$\mN^\prime=\mT^\prime_0\cap A^{\prime\bot}$,
so $(I^\prime_\Gamma-H)J^\prime_\Gamma(\mN^\prime)
\subseteq J^\prime_\Gamma(\mT^\prime_0)\cap\mT^\prime_0=\{0\}$
\ie $\msR_{(VV^c)_{21}}=\{0\}$.
\end{proof}
\begin{rem}
In the proof the decomposition of
$A^{\prime*}$ is due to
\cite[Proposition~3.4]{Jursenas23}.
In the Krein space setting $H$ would have
$U_{11}U^c_{11}$ on the diagonal, so
$H=H^c\neq H^*$ and the invariance of
$\mT^\prime_0$ $(=\Ker\mGm^\prime_0)$ under
$I^\prime_\Gamma-H$ would not lead in general
to the invariance of
$\mT^{\prime\bot}_0=J^\prime_\Gamma(\mT^\prime_0)$
(where now
$J^\prime_\Gamma=\bigl(\begin{smallmatrix}
0 & -\img J^\prime \\ \img J^\prime & 0
\end{smallmatrix} \bigr)$)
under the same operator.
\end{rem}
\begin{cor}\label{cor:M}
Let $A$ $(A^\prime)$ be a closed simple symmetric
operator in a Hilbert space $\msH$ $(\msH^\prime)$
with equal defect numbers,
and let $(\msL,\mGm)$ $((\msL^\prime,\mGm^\prime))$
be an obt for $A^*$ $(A^{\prime *})$ with
Weyl family $M_{\mGm}$ $(M_{\mGm^\prime})$;
the Hilbert spaces
$\msL$ and $\msL^\prime$ are isomorphic.

Let $K\in\msB(\msG_0,\msL)$ bijective,
$B\in\msB(\msL)$, $C=\msB(\msG_0)$, and
let $K^\prime\in\msB(\msG_0,\msL^\prime)$ bijective,
$B^\prime\in\msB(\msL^\prime)$,
$C^\prime=\msB(\msG_0)$,
and $\im(C^\prime-C)=0$,
$\msR_{\im B}\subseteq K(\msD_{C\cap C^\prime})$.

If
\begin{equation}
C+K^*(B-M_{\mGm}(\lambda))^{-1}K=
C^\prime+K^{\prime *}(B^\prime-
M_{\mGm^\prime}(\lambda))^{-1}K^\prime
\label{eq:h}
\end{equation}
on both nonempty
$\bbC^\pm\cap\rho(A_0)\cap\rho(A^\prime_0)$,
then
$A_0=\Ker(\mGm_1-B\mGm_0)$ and
$A^\prime_0=\Ker(\mGm^\prime_1-B^\prime\mGm^\prime_0)$
are unitarily equivalent, whereas
$\Ker\mGm_0$ and
$\Ker\mGm^\prime_0$ are unitarily equivalent iff
$C=C^\prime$.
\end{cor}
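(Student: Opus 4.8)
The plan is to reduce Corollary~\ref{cor:M} to Theorem~\ref{thm:bt} (through Corollary~\ref{cor:bt}) together with the last proposition preceding this corollary, and then to read the two explicit conclusions off the dictionary $\theta\leftrightarrow\theta^\prime$ recorded after Proposition~\ref{prop:V}. First I would unwind \eqref{eq:h}: with $W$ as displayed, a direct computation of the fractional linear transformation $M_{\GB}(\lambda)=W(M_{\mGm}(\lambda))$ gives $M_{\GB}(\lambda)=C+K^*(B-M_{\mGm}(\lambda))^{-1}K$, and likewise for $M_{\GBp}$, so \eqref{eq:h} says exactly that $M_{\GB}=M_{\GBp}$ on $\bbC^\pm\cap\rho(A_0)\cap\rho(A^\prime_0)$. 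Since $A$ and $A^\prime$ are simple symmetric operators in Hilbert spaces, $\Pi$ and $\Pi^\prime$ are minimal bt's for $(A,A)$ and $(A^\prime,A^\prime)$ --- one fixes a nonempty open $\msU\subseteq r(A,A,A_0)$ with $\msU\cup\msU^*\subseteq\rho(A_0)\cap\rho(A^\prime_0)$ (possible by the corollary's assumption that $\bbC^\pm\cap\rho(A_0)\cap\rho(A^\prime_0)$ are nonempty), and $\msU^\prime$ similarly, the density $\CLin\{\Ker_\lambda A^*\vrt\lambda\in\msU\cup\msU^*\}=\msH$ coming from simplicity by the analytic-continuation argument of Proposition~\ref{prop:CLLP}. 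As \eqref{eq:h} is imposed on \emph{both} $\bbC^+\cap\rho(A_0)\cap\rho(A^\prime_0)$ and $\bbC^-\cap\rho(A_0)\cap\rho(A^\prime_0)$, both $\{\msU,M_{\GB}\}\leftrightarrow\{\msU^\prime,M_{\GBp}\}$ and $\{\msU^*,M_{\GB}\}\leftrightarrow\{\msU^{\prime *},M_{\GBp}\}$ hold, so Theorem~\ref{thm:bt} gives that $\Pi^\prime$ is similar to $\Pi$, say $\GBp=\GB\tU^{-1}_{11}$ with $U_{11}$ a homeomorphism $\msH\lto\msH^\prime$.

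To upgrade this similarity to a unitary equivalence I would invoke the last proposition before the corollary: $A,A^\prime$ being taken in Hilbert spaces and $\Pi^\prime$ being similar to $\Pi$, the two standing hypotheses $\im(C^\prime-C)=0$ and $\msR_{\im B}\subseteq K(\msD_{C\cap C^\prime})$ --- the first two lines of \eqref{eq:sysV} --- force the third, $\im B^\prime=(K^\prime K^{-1})(\im B)(K^\prime K^{-1})^*$. Hence the whole of \eqref{eq:sysV} holds; by Proposition~\ref{prop:V} the homeomorphism $V=W^{\prime\,-1}W$ then belongs to $\St_1(\msL_\Gamma,\msL^\prime_\Gamma)$, which is \eqref{eq:unitp}, equivalently \eqref{eq:unit}. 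Corollary~\ref{cor:bt} now gives that $\Pi^\prime$ is unitarily equivalent to $\Pi$, with $U_{11}\in\St_1(\msH,\msH^\prime)$.

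With $\Pi^\prime$ unitarily equivalent to $\Pi$ I would use the correspondence recorded after Proposition~\ref{prop:V}: for $\theta\subseteq\msL_\Gamma$ the extension $\mGm^{-1}(\theta)$ of $A^*$ is unitarily equivalent, through $\tU_{11}$, to $\mGm^{\prime\,-1}(\theta^\prime)$ with $\theta^\prime=B^\prime+K^\prime[\re(C^\prime-C)+K^*(\theta-B)^{-1}K]^{-1}K^{\prime *}$. Taking $\theta$ to be the graph of $B$ gives $\mGm^{-1}(\theta)=A_0$ and, since $(\theta-B)^{-1}$ is then a purely multivalued relation that collapses the inner bracket, $\theta^\prime$ is the graph of $B^\prime$, i.e.\ $\mGm^{\prime\,-1}(\theta^\prime)=A^\prime_0$; this proves $A_0$ unitarily equivalent to $A^\prime_0$. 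Taking $\theta=\{0\}\times\msL$ gives $\mGm^{-1}(\theta)=\Ker\mGm_0$ and $\theta^\prime=B^\prime+K^\prime[\re(C^\prime-C)]^{-1}K^{\prime *}$; since $K^\prime$ is bijective, a short relation computation shows $\theta^\prime=\{0\}\times\msL^\prime$ --- equivalently $\tU_{11}(\Ker\mGm_0)=\Ker\mGm^\prime_0$ --- precisely when $\re(C^\prime-C)=0$, which under $\im(C^\prime-C)=0$ means $C=C^\prime$. This is the asserted equivalence.

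The main obstacle, beyond bookkeeping, is the first step: pinning down an admissible $\msU\subseteq r(A,A,A_0)$, checking that simplicity still yields $\CLin\{\Ker_\lambda A^*\vrt\lambda\in\msU\cup\msU^*\}=\msH$ for that particular open set, and observing that \eqref{eq:h} is posited on both $\bbC^\pm\cap\rho(A_0)\cap\rho(A^\prime_0)$ precisely so that both continuation conditions required in Theorem~\ref{thm:bt} are at hand --- rather than deriving the lower one from the upper, which would already presuppose \eqref{eq:unit}. The relation arithmetic entering the two assertions is routine but must be handled with care because the brackets degenerate to purely multivalued relations.
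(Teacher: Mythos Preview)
Your overall architecture matches the paper's exactly: identify \eqref{eq:h} as $M_{\GB}=M_{\GBp}$, feed this into Theorem~\ref{thm:bt} to get similarity, upgrade to unitarity via the last proposition together with Proposition~\ref{prop:V}, and then read off the two conclusions from items a) and b) following Proposition~\ref{prop:V}. The relation arithmetic you sketch for $\theta=\mathrm{graph}\,B$ and $\theta=\{0\}\times\msL$ is correct.

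The one genuine gap is precisely the step you flag as ``the main obstacle'' but then dismiss as ``possible by the corollary's assumption''. It is not. The assumption $\bbC^\pm\cap\rho(A_0)\cap\rho(A^\prime_0)\neq\emptyset$ says nothing about $\rho(B_0)$, and an admissible $\msU\subseteq r(A,A,A_0)$ requires $\lambda\in r(T_0)$ for $T_0=\bigl(\begin{smallmatrix}A_0 & 0\\ 0 & B_0\end{smallmatrix}\bigr)$, hence $\lambda\in r(B_0)$ as well. Since $B_0=A^*_0$, this amounts to $\rho(A_0)\cap\rho(A_0)^*\neq\emptyset$, which does \emph{not} follow from $\rho(A_0)$ meeting both half-planes (think of two small discs placed asymmetrically with respect to $\bbR$). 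The paper supplies the missing ingredient: for $\lambda\in\bbC_*\cap\rho(A_0)$ one writes
\[
B^*-M_{\mGm}(\lambda)=(B-M_{\mGm}(\lambda))^*(I_\msL-X(\lambda)),\qquad
X(\lambda)=2\img(B-M_{\mGm}(\lambda))^{*-1}\im M_{\mGm}(\lambda),
\]
and then argues that $X(\lambda)$ is bijective and non-constant, so for some $\lambda$ either $\norm{X(\lambda)}<1$ or $\norm{X(\lambda)^{-1}}<1$; in either case $I_\msL-X(\lambda)$ is invertible and $\lambda\in\rho(B_0)$. This is the content of the paper's proof that you are missing; everything else in your proposal is in line with the paper.
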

\begin{proof}
As the final step
a bt
$\{\msU,\Pi\}$ with an open subset
$\msU\subseteq\bbC_*\cap \rho(A_0)\cap r(B_0)$
$(\subseteq r(A,A,A_0))$
is minimal; put $A^\sim=\mT_0$ in
Proposition~\ref{prop:CLLP}. And
$\bbC_*\cap \rho(A_0)\cap\rho(B_0)\neq\emptyset$
if $\bbC_*\cap \rho(A_0)\neq\emptyset$:
For all $\lambda\in\bbC_*\cap\rho(A_0)$,
$B^*-M_{\mGm}(\lambda)=(B-M_{\mGm}(\lambda))^*
(I_\msL-X(\lambda))$, where
$X(\lambda)=2\img(B-M_{\mGm}(\lambda))^{*-1}
\im M_{\mGm}(\lambda)\in\msB(\msL)$ is bijective;
since $X(\lambda)$ is also non-constant, there is
$\lambda$ such that either
$\norm{X(\lambda)}<1$ or $\norm{X(\lambda)^{-1}}<1$,
and in both cases $\lambda\in\rho(B_0)$.
\end{proof}
\begin{rem}
$\{\bbC_*\cap\rho(B_0),M_{\GB} \}$ is
the holomorphic element; the representing
pair $\{\Phi(\lambda),\Psi(\lambda)\}$
for the closed operator
$M_{\GB}(\lambda)=E^{-1}(M_{\GA}(\lambda))=WW^c(M_{\GA}(\lambda))$
is
$\Phi(\lambda)=I_0+L(M_{\GA}(\lambda)-C^*)$,
$\Psi(\lambda)=C\Phi(\lambda)+M_{\GA}(\lambda)-C^*$,
$L=K^{-1}(B-B^*)K^{*-1}$.
\end{rem}
\begin{rem}
If $C=C^\prime$ then \eqref{eq:h} holds iff
$\{\bbC_*\cap\rho(A_0),M_{\mathring{\Gamma}}\}
\leftrightarrow
\{\bbC_*\cap\rho(A^\prime_0),
M_{\mathring{\Gamma}^\prime}\}$---where
$M_{\mathring{\Gamma}}(\lambda)=
K^*(\re B-M_{\mGm}(\lambda))^{-1}K$
is the Weyl
function associated with
the obt $(\msL,\mathring{\Gamma})$ for $A^*$, with
$\mathring{\Gamma}_0=K^{-1}(\mGm_1-(\re B)\mGm_0)$,
$\mathring{\Gamma}_1=
-K^*\mGm_0$,
\cf \cite[Lemma~3.10]{Hassi13}---so
Theorem~\ref{thm:obt} applies after
one proves the third condition in \eqref{eq:sysV}.
In \cite[Theorem~3.11]{Hassi13}
the corollary is originally shown under hypotheses that
\eqref{eq:h} holds for $C=0=C^\prime$ and
either
a)
for all $\lambda\in\bbC^\pm\cap\rho(A_0)\cap\rho(A^\prime_0)$
or
b)
for all $\lambda\in \bbC^+\cap\rho(A_0)\cap\rho(A^\prime_0)$
if
$\Ind\mT_0\neq\msH$,
$\Ind\mT^\prime_0\neq\msH^\prime$,
and the Nevanlinna
functions $M_{\mGm}$, $M_{\mGm^\prime}$
satisfy some rather involved conditions.
(By Proposition~\ref{prop:dsptT}
$\Ind\mT_0=\{0\}=\Ind\mT^\prime_0$
if $A\in(P)$ in $\msH$ and
$A^\prime\in(P)$ in $\msH^\prime$.)
\end{rem}
\begin{rem}
Unlike
$\bbR\cap r(A_0)\cap r(B_0)=\bbR\cap\rho(A_0)$,
$\bbC_*\cap r(A_0)\cap r(B_0)$ is in general neither
larger nor smaller than $\bbC_*\cap\rho(A_0)$;
in Corollary~\ref{cor:Mo}
with $\kappa=0$ the subsets even do not intersect,
$\bbC_*\cap r(A_0)\cap r(B_0)=\bbC^-$,
$\bbC_*\cap\rho(A_0)=\bbC^+$, which is in
contrast to $A_0$ ($B_0$) parametrized by $B\in\msB(\msL)$
($B^*\in\msB(\msL)$)
in Corollary~\ref{cor:M}.
Another example of both nonempty subsets is as follows:
For an arbitrary closed relation $A_0$
in a Krein space,
if $\bbC_*\cap\sigma(A_0)$
is nowhere dense (in $\bbC$) then
$\bbC_*\cap r(A_0)\cap r(A^c_0)$
(as well as $\bbC_*\cap\rho(A_0)$) is
nonempty.
\begin{proof}
$\bbC_*\cap\rho(A_0)$ clearly is nonempty.
Assume by contradiction
$\bbC_*\cap r(A_0)\cap r(A^c_0)=\emptyset$, that is
$\bbC_*\cap r(A_0)\subseteq\bbC\setm r(A^c_0)$.
Then $\bbC_*\cap \rho(A_0)\subseteq\sigma(A^c_0)$
so
$\sigma^*(A_0)=(\bbC_*\cap\rho(A_0))\sqcup
(\sigma(A_0)\cap\sigma^*(A_0))$. Because
$\sigma(A_0)$ is a closed set while
$\bbC_*\cap\rho(A_0)\equiv\bbC_*\setm\sigma_0$
is open,
$\sigma_0\cap\ol{\bbC_*\setm\sigma_0}\subseteq
\sigma^*(A_0)$ or equivalently
$\sigma_0\cap\partial\sigma_0\subseteq\sigma^*_0$;
$\partial\sigma_0$ is the boundary of $\sigma_0$.
If by hypothesis the closure $\bar{\sigma}_0=\partial\sigma_0$,
then $\sigma_0\subseteq\sigma^*_0$
or equivalently
$\bbC_*\cap\rho(A_0)=\emptyset$.
\end{proof}
\end{rem}


\begin{bibdiv}
\begin{biblist}

\bib{Arlinskii05}{incollection}{
      author={Arlinskii, Yu.},
      author={Hassi, S.},
      author={de~Snoo, H.},
       title={Q-functions of quasi-selfadjoint contractions},
        date={2005},
   booktitle={{O}perator {T}heory and {I}ndefinite {I}nner {P}roduct {S}paces},
      editor={Langer, M.},
      editor={Luger, A.},
      editor={Woracek, H.},
      volume={163},
   publisher={Birkhauser Verlag},
       pages={23\ndash 54},
}

\bib{Arov79}{article}{
      author={Arov, D.},
       title={Passive linear stationary dynamical systems},
        date={1979},
     journal={Sib. Mat. Zhurn.},
      volume={20},
      number={2},
       pages={211\ndash 228},
}

\bib{Azizov03}{article}{
      author={Azizov, T.},
      author={\'{C}urgus, B.},
      author={Dijksma, A.},
       title={Standard symmetric operators in {P}ontryagin spaces: a
  generalized von {N}eumann formula and minimality of boundary coefficients},
        date={2003},
     journal={J. Func. Anal.},
      volume={198},
      number={2},
       pages={361\ndash 412},
}

\bib{Azizov89}{book}{
      author={Azizov, T.},
      author={Iokhvidov, I.},
       title={Linear {O}perators in {S}paces with an {I}ndefinite {M}etric},
   publisher={John Wiley \& Sons. Inc.},
        date={1989},
}

\bib{Behrndt23}{article}{
      author={Behrndt, J.},
       title={Boundary value problems for adjoint pairs of operators},
        date={2023},
     journal={arXiv:2313.08955},
         url={http://arxiv.org/abs/2312.08955},
}

\bib{Behrndt11}{article}{
      author={Behrndt, J.},
      author={Derkach, V.~A.},
      author={Hassi, S.},
      author={de~Snoo, H.},
       title={A realization theorem for generalized {N}evanlinna families},
        date={2011},
     journal={Operators and Matrices},
      volume={5},
      number={4},
       pages={679\ndash 706},
}

\bib{Behrndt20}{book}{
      author={Behrndt, J.},
      author={Hassi, S.},
      author={de~Snoo, H.},
       title={Boundary {V}alue {P}roblems, {W}eyl {F}unctions, and
  {D}ifferential {O}perators.},
   publisher={Birkhauser},
        date={2020},
      volume={108},
}

\bib{Behrndt07}{article}{
      author={Behrndt, Jussi},
      author={Langer, Matthias},
       title={Boundary value problems for elliptic partial differential
  operators on bounded domains},
        date={2007},
     journal={J. Func. Anal.},
      volume={243},
       pages={536\ndash 565},
}

\bib{Brown19}{article}{
      author={Brown, B.},
      author={Klaus, M.},
      author={Malamud, M.},
      author={Mogilevskii, V.},
      author={Wood, I.},
       title={Weyl solutions and j-selfadjointness for {D}irac operators},
        date={2019},
     journal={J. Math. Anal. Appl.},
      volume={480},
      number={2},
}

\bib{Derkach95}{article}{
      author={Derkach, V.},
       title={On {W}eyl function and generalized resolvents of a {H}ermitian
  operator in a {K}rein space},
        date={1995},
     journal={Integr. Equ. Oper. Theory},
      volume={23},
      number={4},
       pages={387\ndash 415},
}

\bib{Derkach09}{article}{
      author={Derkach, V.},
      author={Hassi, S.},
      author={Malamud, M.},
      author={de~Snoo, H.},
       title={Boundary relations and generalized resolvents of symmetric
  operators},
        date={2009},
     journal={Russ. J. Math. Phys.},
      volume={16},
      number={1},
       pages={17\ndash 60},
}

\bib{Derkach91}{article}{
      author={Derkach, V.},
      author={Malamud, M.},
       title={Generalized {R}esolvents and the {B}oundary {V}alue {P}roblems
  for {H}ermitian {O}perators with {G}aps},
        date={1991},
     journal={J. Func. Anal.},
      volume={95},
      number={1},
       pages={1\ndash 95},
}

\bib{Derkach95a}{article}{
      author={Derkach, V.},
      author={Malamud, M.},
       title={The extension theory of {H}ermitian operators and the moment
  problem},
        date={1995},
     journal={J. Math. Sci.},
      volume={73},
      number={2},
       pages={141\ndash 242},
}

\bib{Derkach17}{article}{
      author={Derkach, Vladimir},
      author={Hassi, Seppo},
      author={Malamud, Mark~M.},
       title={Generalized boundary triples, {I}. {S}ome classes of isometric
  and unitary boundary pairs and realization problems for subclasses of
  {N}evanlinna functions},
        date={2020},
     journal={Math. Nachr.},
      volume={293},
      number={7},
       pages={1278\ndash 1327},
         url={https://onlinelibrary.wiley.com/doi/abs/10.1002/mana.201800300},
}

\bib{Hassi98}{article}{
      author={Hassi, S.},
      author={de~Snoo, H.},
      author={Woracek, H.},
       title={Some interpolation problems of {N}evanlinna--{P}ick type. {T}he
  {K}rein--{L}anger method},
        date={1998},
     journal={Operator Theory: Advances and Applications},
      volume={106},
       pages={201\ndash 216},
}

\bib{Hassi09}{article}{
      author={Hassi, S.},
      author={de~Snoo, H. S.~V.},
      author={Szafraniec, F.~H.},
       title={Componentwise and {C}artesian decompositions of linear
  relations},
        date={2009},
     journal={Dissert. Math.},
      volume={465},
       pages={1\ndash 59},
}

\bib{Hassi20}{article}{
      author={Hassi, S.},
      author={Labrousse, J.},
      author={de~Snoo, H.},
       title={Operational calculus for rows, columns, and blocks of linear
  relations},
        date={2020},
     journal={Advances in Operator Theory},
      volume={5},
      number={3},
       pages={1193\ndash 1228},
}

\bib{Hassi05}{article}{
      author={Hassi, S.},
      author={Malamud, M.},
      author={Mogilevskii, V.},
       title={Generalized resolvents and boundary triplets for dual pairs of
  linear relations},
        date={2005},
     journal={Methods of Funct. Anal. and Topology},
      volume={11},
      number={2},
       pages={170\ndash 187},
}

\bib{Hassi13}{article}{
      author={Hassi, S.},
      author={Malamud, M.},
      author={Mogilevskii, V.},
       title={Unitary equivalence of proper extensions of a symmetric operator
  and the {W}eyl function},
        date={2013},
     journal={Integr. Equ. Oper. Theory},
      volume={77},
      number={4},
       pages={449\ndash 487},
}

\bib{Hassi16}{article}{
      author={Hassi, S.},
      author={Wietsma, H.},
       title={Minimal realizations of generalized {N}evanlinna functions},
        date={2016},
     journal={Opuscula Math.},
      volume={36},
      number={6},
       pages={749\ndash 768},
}

\bib{Jursenas23}{article}{
      author={Jur\v{s}\.{e}nas, R.},
       title={On the similarity of boundary triples of symmetric operators in
  {K}rein spaces},
        date={2023},
     journal={Compl. Anal. Oper. Theory},
      volume={17},
      number={72},
       pages={1\ndash 39},
}

\bib{Jursenas24}{article}{
      author={Jur\v{s}\.{e}nas, R.},
       title={Locus of non-real eigenvalues of a class of linear relations in a
  {K}rein space},
        date={2024},
     journal={arXiv:2410.16725},
         url={https://arxiv.org/pdf/2410.16725},
}

\bib{Langer75}{article}{
      author={Langer, H.},
       title={Invariant subspaces for a class of operators in spaces with
  indefinite metric},
        date={1975},
     journal={J. Func. Anal.},
      volume={19},
      number={3},
       pages={232\ndash 241},
}

\bib{Langer77}{article}{
      author={Langer, H.},
      author={Textorius, B.},
       title={On generalized resolvents and {Q}-functions of symmetric linear
  relations (subspaces) in {H}ilbert space},
        date={1977},
     journal={Pacific. J. Math.},
      volume={72},
      number={1},
       pages={135\ndash 165},
         url={https://projecteuclid.org/euclid.pjm/1102811276},
}

\bib{Malamud10}{article}{
      author={Malamud, M.},
       title={Spectral theory of elliptic operators in exterior domains},
        date={2010},
     journal={Russ. J. Math. Phys.},
      volume={17},
      number={1},
       pages={96\ndash 125},
}

\bib{Malamud02}{article}{
      author={Malamud, M.},
      author={Mogilevskii, V.},
       title={Krein type formula for canonical resolvents of dual pairs of
  linear relations},
        date={2002},
     journal={Methods Func. Anal. Topology},
      volume={8},
      number={4},
       pages={72\ndash 100},
}

\bib{Malamud03}{inproceedings}{
      author={Malamud, M.},
      author={Mogilevskii, V.},
       title={Generalized resolvents of isometric operators},
        date={2003},
   booktitle={Spectral and {E}volution {P}roblems. {P}roceedings of the
  {T}hirteenth {C}rimean {A}utumn {M}athematical {S}chool-{S}ymposium},
      editor={Kopachevsky, N.~D.},
      editor={Orlov, I.~V.},
      volume={13},
   publisher={Taurida National V.Vernadsky University},
     address={Simferopol, Ukraine},
       pages={82\ndash 93},
}

\bib{Mogilevski06}{article}{
      author={Mogilevskii, V.},
       title={Boundary triplets and {K}rein type resolvent formula for
  symmetric operators with unequal defect numbers},
        date={2006},
     journal={Methods Func. Anal. Topology},
      volume={12},
      number={3},
       pages={258\ndash 280},
}

\bib{Mogilevski09}{article}{
      author={Mogilevskii, V.},
       title={Boundary triplets and {T}itchmarsh--{W}eyl functions of
  differential operators with arbitrary deficiency indices},
        date={2009},
     journal={Methods Func. Anal. Topology},
      volume={15},
      number={3},
       pages={280\ndash 300},
}

\bib{Mogilevski11}{article}{
      author={Mogilevskii, V.},
       title={Description of generalized resolvents and characteristic matrices
  of differential operators in terms of the boundary parameter},
        date={2011},
     journal={Math. Notes},
      volume={90},
      number={4},
       pages={558\ndash 583},
}

\bib{Mogilevskii20}{article}{
      author={Mogilevskii, V.},
       title={On couplings of symmetric operators with possibly unequal and
  infinite deficiency indices},
        date={2020},
     journal={Operators and Matrices},
      volume={14},
      number={1},
       pages={33\ndash 69},
}

\bib{Popovici13}{article}{
      author={Popovici, D.},
      author={Sebesty\'{e}n, Z.},
       title={Factorizations of linear relations},
        date={2013},
     journal={Advances in Mathematics},
      volume={233},
       pages={40\ndash 55},
}

\bib{Schmudgen20}{article}{
      author={Schm\"{u}dgen, K.},
       title={Adjoint pairs and unbounded normal operators},
        date={2020},
     journal={Acta Sci. Math.},
      volume={88},
       pages={449\ndash 467},
}

\bib{Schmudgen12}{book}{
      author={Schm\"{u}dgen, Konrad},
       title={Unbounded {S}elf-adjoint {O}perators on {H}ilbert {S}pace},
   publisher={Springer Dordrecht Heidelberg New York London},
        date={2012},
}

\end{biblist}
\end{bibdiv}

\end{document}